\title{A Decomposition Theorem for Aronszajn Lines}
\author{Keegan Dasilva Barbosa}
\date{ }
\newcommand{\RR}{\mathbb{R}}
\newcommand{\NN}{\mathbb{N}}
\newcommand{\QQ}{\mathbb{Q}}
\newtheorem{theorem}{Theorem}[section]
\newtheorem{prop}{Proposition}
\newtheorem{corollary}{Corollary}[theorem]
\newtheorem{lemma}[theorem]{Lemma}
\theoremstyle{definition}
\newtheorem{definition}{Definition}[section]
\newtheorem{fact}{Fact}[section]
\begin{document}
\maketitle

\begin{abstract}
	We show that under the proper forcing axiom the class of all Aronszajn lines behave like $\sigma$-scattered orders under the embeddability relation. In particular, we are able to show that the class of better quasi order labeled fragmented Aronszajn lines is itself a better quasi order. Moreover, we show that every better quasi order labeled Aronszajn line can be expressed as a finite sum of labeled types which are algebraically indecomposable. By encoding lines with finite labeled trees, we are also able to deduce a decomposition result, that  for every Aronszajn line $L$ there is integer n such that for any finite colouring of $L$ there is subset $L^\prime$ of $L$ isomorphic to $L$ which uses no more than n colours.
	 
\end{abstract}
\section{Introduction}
It was shown by Carlos Martinez-Ranero that under PFA, the class of Aronszajn lines is a better quasi order under the embeddability relation \cite{Carlos}. The proof requires the development of an analogue to Hausdorff rank for scattered linear orders \cite{Hausdorff} to the Aronszajn case, as well as the construction of incompatible Aronszajn lines $D_\alpha^+ $ and $D_\alpha^-$, $\alpha \in \omega_2 $, that behave as universal lines of rank $\leq\alpha$. What is interesting about the lines $D_\alpha^+$ and $D_\alpha^-$ is that they are fairly homogeneous. In particular, they are algebraically indecomposable and are recursively constructed via a variant of shuffle described by Laver \cite{Laver1}. Laver used shuffles to recursively construct the class of $\sigma$-scattered linear orders and showed the class was BQO. We follow his construction to find analogues of his results in the context of fragmented Aronszajn lines. In particular, we are able to recursively construct a well behaved class of algebraically indecomposable fragmented Aronszajn lines that behave as the building blocks for all fragmented Aronszajn types.
\begin{theorem}(PFA)
	Let $Q$ be a BQO. Any $Q$ labeled fragmented Aronszajn line $\Phi \in \mathcal{C}(Q)$ can be written as a finite sum of members from a class of $Q$ labeled algebraically indecomposable Aronszajn lines $\mathcal{H}(Q)$. 
\end{theorem}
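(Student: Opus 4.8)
The plan is to proceed by induction on the rank $\rho(\Phi)$ that $\Phi$ inherits from the recursive construction of $\mathcal{C}(Q)$ (the fragmented-rank hierarchy developed above, the Aronszajn analogue of Hausdorff rank), proving the sharper statement that the number of summands can be bounded by a function of $\rho(\Phi)$ alone. At the bottom of the hierarchy $\Phi$ is a finite sum of basic building blocks, namely $Q$-labeled countable scattered orders and $Q$-labeled Countryman lines, and each basic block is itself a finite sum of algebraically indecomposable members of $\mathcal{H}(Q)$: for a scattered type this is the classical Hausdorff--Laver decomposition into indecomposables, and for a Countryman line it is immediate from the way $\mathcal{H}(Q)$ is generated. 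This settles the base case.

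For the inductive step, look at the last operation used to build $\Phi$. If $\Phi = \Phi_0 + \cdots + \Phi_k$ is a finite sum, apply the inductive hypothesis to each $\Phi_j$, concatenate the resulting decompositions, and add the bounds (the labels simply restrict). The substantial case is a substitution $\Phi = \sum_{i \in \Theta}\Psi_i$ — in particular the shuffle $\sigma$ of Laver's variant, where $\Theta$ is the distinguished ``dense'' fragmented index along which a finite list of types $\Psi_i$ is shuffled — with $\rho(\Psi_i), \rho(\Theta) < \rho(\Phi)$. First decompose the index: $\Theta$ is either countable scattered (use Hausdorff--Laver) or a lower-rank fragmented Aronszajn line (use the inductive hypothesis with trivial labels), so $\Theta = \Theta_1 + \cdots + \Theta_m$ with each $\Theta_r$ algebraically indecomposable. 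Writing $\Phi = \sum_{r \le m}\big(\sum_{i \in \Theta_r}\Psi_i\big)$ exhibits $\Phi$ as a finite sum, so we may assume $\Theta$ itself algebraically indecomposable.

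The heart of the matter is then an \emph{absorption lemma}: a substitution $\Phi^{\circ} = \sum_{i \in \Theta}\Psi_i$ along an algebraically indecomposable fragmented index, of the restricted kind occurring in the construction of $\mathcal{C}(Q)$ — so that the block types recur cofinally, and coinitially, in every nonempty final, resp.\ initial, segment of $\Theta$ — is again algebraically indecomposable, and lies in $\mathcal{H}(Q)$ since it has exactly the shape of a generator of $\mathcal{H}(Q)$. Granting this, one finishes as follows. If $\Theta$ is left-indecomposable, then for any cut $\Phi^{\circ} = A + B$ the final part $B$ is $\sum_{i \in \Theta'}\Psi_i$ for a nonempty final segment $\Theta'$ of $\Theta$, up to the first block of $B$ being only a tail of some $\Psi_{i_0}$; choosing inside $\Theta'$ a copy of $\Theta$ that respects the recurrence of block types and invoking the absorption lemma gives $\Phi^{\circ} \hookrightarrow B$ whenever the cut does not fall inside that single leftmost block, and the finitely many remaining cuts are absorbed by peeling off a prefix consisting of a decomposition of at most the block $\Psi_{i_0}$. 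The right-indecomposable case is symmetric, and in the ``$\mathbb{Q}$-like'' shuffle case (every nonempty open interval of $\Theta$ already realising the full pattern of block types) no peeling is needed at all. Combining the core $\Phi^{\circ} \in \mathcal{H}(Q)$ with the inductively obtained decompositions of the bounded prefix and suffix yields the required finite sum.

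I expect the absorption lemma to be the main obstacle, and its difficulty to be entirely in the Aronszajn (non-scattered) case: one must show that a final segment of $\Phi^{\circ}$ whose opening block has been truncated to a tail $\Psi_{i_0}^{\mathrm{tail}}$ still contains an isomorphic copy of $\Phi^{\circ}$, re-routing an embedding of $\Phi^{\circ}$ past the mutilated block using the cofinal recurrence of the block types. Producing this embedding, and matching it to the $Q$-labels, is where PFA enters — through the basis theorem for Aronszajn lines and the universality of the lines $D_\alpha^{+}, D_\alpha^{-}$ of the appropriate rank, exactly as in the construction of $\mathcal{H}(Q)$, with the labelled refinement handled via the BQO hypothesis on $Q$ in the usual fashion. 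Once the lemma is in hand, finiteness of the overall decomposition is routine bookkeeping: the index decomposition $\Theta = \Theta_1 + \cdots + \Theta_m$ has finitely many terms, each of the finitely many boundary corrections strictly decreases $\rho$, and $\rho$ is well-founded, so the recursion terminates with a summand bound depending only on $\rho(\Phi)$.
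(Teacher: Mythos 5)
There is a genuine gap, and it sits exactly where you locate the ``heart of the matter,'' but not for the reason you give. Your absorption lemma is premised on the sum $\Phi^{\circ}=\sum_{i\in\Theta}\Psi_i$ being ``of the restricted kind occurring in the construction of $\mathcal{C}(Q)$ --- so that the block types recur cofinally, and coinitially, in every nonempty final, resp.\ initial, segment.'' But the recursive definition of $\mathcal{C}_\alpha(Q)$ imposes no such density condition: an element of $\mathcal{C}_\beta(Q)$ is an \emph{arbitrary} sum $\sum_{x\in L}\phi_x$ over $L\in\{C,C^*,\mathbb{Q}\}$ with summands of lower rank, and the summand types may be distributed as badly as you like (all the high-rank blocks concentrated in one interval, say). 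For genuine shuffles your absorption lemma is essentially Lemma 3.1 of the paper together with the definition of $\mathcal{H}(Q)$, and is not the hard part. The hard part --- which your proposal never addresses --- is converting an arbitrary labeled $L$-sum into a finite concatenation of shuffles. Decomposing the \emph{index} $\Theta$ into indecomposable pieces does not help here, since the obstruction lives in how the labels/summands sit over the index, not in the index itself.

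The paper's route through this is its Lemma 3.5: for $L\in\{\mathbb{Q},C,C^*\}$, any $Q$-labeled copy of $L$ splits into a \emph{countable} sum of universal (i.e.\ shuffle-like) pieces; the proof is a BQO induction on the label order $Q$ combined with a convex equivalence relation on $L$. One then regards $\sum_{x\in L}\Phi_x$ as an $\mathcal{H}(Q)$-labeled copy of $L$ (after using $L^2\equiv L$ to reduce each summand to a single member of $\mathcal{H}(Q)$), applies Lemma 3.5 to get a countable sum of universal pieces, invokes Laver's theorem on the countable index to cut this down to a finite sum, and finally flattens $\mathcal{H}(\mathcal{H}(Q))$ back into $\mathcal{H}(Q)$ (the paper's Lemma 3.6 --- another ingredient absent from your outline). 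Your proposal, as written, would prove the theorem only for those $\Phi$ that are already built from shuffles, which begs the question. Secondary omissions: the $\omega/\omega^*$ case, where one needs the BQO property of $\mathcal{H}(Q)$ to extract an unbounded (increasing) tail from a sequence of summands that need not be increasing; and the claimed bound on the number of summands as a function of rank alone, which is neither established by your argument nor needed.
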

By showing that our constructed class $\mathcal{H}(Q)$ is BQO, we are able to give an alternative proof that the class of Aronszajn lines is BQO under PFA. More interestingly, we are also able to code algebraically indecomposable Aronszajn lines onto BQO labeled trees like in \cite{Laver2} to show the following decomposition theorem for Aronszajn lines. 
\begin{theorem}{(PFA)}
	For any Aronszajn type $\phi$, there is an $n\in \omega$ such that $\forall k\in \omega $ $\phi \rightarrow (\phi)_{k,n}^1 $
\end{theorem}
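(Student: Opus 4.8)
The plan is to derive the partition relation from the structure theory developed above, in close analogy with Laver's decomposition theorem for $\sigma$-scattered orders \cite{Laver2}. The first move is a reduction to the algebraically indecomposable case. By Theorem~1 (taking $Q$ a singleton) we may write $\phi = H_1 + \cdots + H_m$ with $m$ fixed and each $H_i$ algebraically indecomposable. It then suffices to find, for each $i$, an integer $n_i$ with $H_i \to (H_i)_{k,n_i}^1$ for all $k$: given such $n_i$, the integer $n := n_1 + \cdots + n_m$ witnesses $\phi \to (\phi)_{k,n}^1$, since for any $k$-colouring $c$ of $\phi$ one applies the hypothesis to the restriction of $c$ to the $i$-th convex block to obtain a copy $H_i' \subseteq H_i$ of $H_i$ using at most $n_i$ colours, and then $H_1' + \cdots + H_m'$ is a suborder of $\phi$ isomorphic to $\phi$ on which $c$ uses at most $n$ colours.

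So assume $\phi$ is algebraically indecomposable, and invoke the coding of such lines by finite labeled trees (the Aronszajn analogue of the coding in \cite{Laver2}): fix a finite tree $T_\phi$, labeled over a fixed BQO $Q_0$ carrying the rank and $D_\alpha^{\pm}$ data of the recursive construction, from which $\phi$ is recovered by iterating the operations that generate $\mathcal{H}(Q)$ — finite sums, $\mathbb{Q}$-shuffles, and the Aronszajn shuffles (Laver's recursive variant of the shuffle), together with reversals. Assign to every such tree $T$ a natural number $n(T)$ by recursion on its height: for a leaf, $n(T)$ is the indivisibility number of the associated primitive type, a finite quantity (singletons, $\mathbb{Q}$, and the seed lines $D_0^{\pm}$ being indivisible, so $n=1$); at an internal node put $n(T)$ equal to the sum of the $n$-values of its children. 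The claim is that the $\phi$ coded by $T_\phi$ satisfies $\phi \to (\phi)_{k,n(T_\phi)}^1$ for every $k$, which we prove by induction on $T_\phi$.

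The inductive step is organized by the root of $T_\phi$. A finite-sum root is handled by the reduction of the first paragraph. For a $\mathbb{Q}$-shuffle root $\phi = \sigma(\varphi_0,\dots,\varphi_{r-1})$ and a fixed $k$-colouring $c$: each $\varphi_j$ occurs on a dense set $X_j$ of index points, and for $q \in X_j$ the family $\mathcal{S}(q)$ of colour-sets of size at most $n(T_{\varphi_j})$ realized by some sub-copy of $\varphi_j$ inside the block at $q$ is nonempty (by the induction hypothesis) and lies in a fixed finite set, so $q \mapsto \mathcal{S}(q)$ is a finite colouring of $X_j \cong \mathbb{Q}$; using the indivisibility of $\mathbb{Q}$ together with a back-and-forth one thins the index to a copy $\mathbb{Q}' \cong \mathbb{Q}$ in which every $\varphi_j$ still occurs densely and $\mathcal{S}$ is constant on each $\mathbb{Q}' \cap X_j$, and then picking one admissible colour-set and one sub-copy per block assembles a copy of $\phi$ using at most $\sum_{j<r} n(T_{\varphi_j}) = n(T_\phi)$ colours. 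The remaining case is an Aronszajn-shuffle root, where $\phi$ is obtained by shuffling $\varphi_0,\dots,\varphi_{r-1}$ along an uncountable, Aronszajn-tree-like index $\mathcal{T}$; the argument runs formally in parallel — attach to each node $t$ of $\mathcal{T}$ the finite datum $\mathcal{S}(t)$, then thin $\mathcal{T}$ to an uncountable sub-structure $\mathcal{T}'$ which (i) still codes a copy of $\phi$ and (ii) has $\mathcal{S}(t)$ depending only on the label of $t$, and conclude as before with the bound $n(T_\phi)$.

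I expect this last thinning to be the main obstacle. Part (i) is exactly where the engineered homogeneity of the lines $D_\alpha^{\pm}$ is essential: their algebraic indecomposability and the self-similarity of Laver's recursive shuffle must be used to guarantee that an uncountable sub-shuffle is again a full copy of $\phi$, not a copy of some line of strictly smaller rank. Part (ii) is a genuine partition problem over an uncountable Aronszajn index, and is the point at which PFA enters: in the $\sigma$-scattered setting of \cite{Laver2} the corresponding homogenization is the elementary indivisibility of $\mathbb{Q}$ (or a pigeonhole over $\omega$), whereas here one must pass to an uncountable subtree on which a finite colouring of the nodes is constant, which one obtains from the partition and coherence properties of Countryman and Aronszajn lines available under PFA (via pressing-down and elementary-submodel arguments, or $\Delta$-system-style thinning along $\mathcal{T}$). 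Granting both parts, the induction closes, so every algebraically indecomposable Aronszajn type — and hence, by the first reduction, every Aronszajn type $\phi$ — admits a finite $n$ with $\phi \to (\phi)_{k,n}^1$ for all $k$.
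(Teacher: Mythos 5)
Your overall architecture matches the paper's: reduce to algebraically indecomposable summands via the finite decomposition theorem, code each summand by a finite labeled tree, and induct on the tree with the number of treetops as the Ramsey bound. But the two steps you yourself flag as ``the main obstacle'' are precisely the content of the inductive step, and you neither close them nor point at the facts that actually close them. First, the index of an Aronszajn shuffle is the Countryman line $C$ (or $C^*$), not an ``Aronszajn-tree-like index $\mathcal{T}$,'' and the homogenization of a finite colouring of the index is not obtained by pressing-down, elementary submodels, or $\Delta$-system thinning: it follows from the minimality of $C$ under PFA (every uncountable suborder of $C$ contains an isomorphic copy of $C$, so some colour class of a finite colouring does), i.e.\ from $C$ being indecomposable with Ramsey degree $1$, which is a black-box input from \cite{StevoWalks} and \cite{Carlos}. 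Second, your picture of a shuffle node as a shuffle of \emph{finitely many} types $\varphi_0,\dots,\varphi_{r-1}$ each occurring densely is not the general case: a shuffle in $\mathcal{H}$ may have infinitely many distinct summand types. The paper first proves (using the finite AI decomposition together with the BQO property of $\mathcal{H}(Q)$) that every such shuffle is equivalent to a \emph{strict} one, whose summand types form a $\leqq$-chain, and then shows that a strict shuffle of types of uniform Ramsey degree $n$ again has Ramsey degree $n$ --- the selected monochromatic sub-copies realize a cofinal subfamily of the chain, and shuffles are determined up to biembeddability by the cofinality class of their summands. Your part (i) worry is discharged by exactly this cofinality/universality argument, not by any special self-similarity of the $D_\alpha^{\pm}$ beyond it.

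A further unacknowledged dependency: the existence of the finite-tree coding with a controlled number of treetops is itself a theorem here (it requires Laver's covering theorem to pass to trees with at most $n$ treetops of a fixed shape, so that the labelings can be arranged into a $\leqq_I$-chain), not something you can import directly from \cite{Laver2}. Your leaf-counting recursion and the handling of the $[\omega;\dots]$ and finite-sum nodes are correct and agree with the paper, and your reduction in the first paragraph is exactly the paper's; the gap is that everything resting on the shuffle case is left conditional on techniques that are either not the right ones or are the very lemmas being proved.
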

The paper is organized as follows. Section 2 will be devoted entirely to preliminaries. If one is familiar with linear orders, BQO's and Ramsey degrees, they can skip to section 2.4. While most of 2.4 is stating well known facts about Aronszajn lines, we also define shuffles here which will become highly relevant later. Section 3 is split into three major components. Section 3.1 is devoted to the finer structure analysis i.e defining shuffles and recursively constructing every Aronszajn line via shuffles of $\{0,1\}$. We also prove some relevant properties this class has. In the second, we prove Theorem 1.1 using Laver's techniques from \cite{Laver1}. In the last, we prove theorem 1.2 by adapting Laver's finite tree coding argument \cite{Laver2}.

\section{Preliminaries}

\subsection{Basics of BQO's}
We will first define what it means for a quasi order to be better. We will not use the classical definition developed by Nash-Williams \cite{Nash1}, but rather the topological one seen in \cite{ArgyrosStevo} and originally developed by Simpson \cite{Simpson}. This definition lends itself well to applications of the Galvin-Prikry theorem as seen in \cite{Carlos}. 
\begin{definition}
	Let $Q$ be a quasi order. We say $Q$ is a BQO if for any $f:[\NN]^\infty \rightarrow Q$, Borel with respect to the discrete topology of $Q$, there exists an infinite $A\subseteq \NN$ such that $\forall a\in [A]^\infty$ $f(a) \leqq f(a\setminus \text{min}a )$. 
\end{definition}
A nice property of BQO's is they behave a lot like well orders. In particular, we can do induction on them.
\begin{fact}(BQO induction)
	Let $Q$ be a BQO. To show a statement is true for all $q\in Q$, we may suppose it is true for the set $\{r\in Q: q\nleqq r \} $
\end{fact}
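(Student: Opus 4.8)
The plan is to recognise the stated principle as nothing more than ordinary transfinite (well-founded) induction carried out over $Q$: the only feature of BQO's that is actually used is that, viewed as quasi-orders, they are well-founded, and this drops out immediately from the definition of a BQO. So I would split the argument into (i) extracting well-foundedness from the definition, and (ii) the usual minimal-counterexample packaging.

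For step (i) I would check that $Q$ admits no infinite strictly $\leqq$-descending sequence. Suppose $q_0,q_1,q_2,\dots\in Q$ satisfy $q_{n+1}\leqq q_n$ and $q_n\nleqq q_{n+1}$ for every $n$; transitivity then forces $q_i\nleqq q_j$ whenever $i<j$, since $q_i\leqq q_j\leqq q_{j-1}\leqq\cdots\leqq q_{i+1}$ would give $q_i\leqq q_{i+1}$. The map $f\colon[\NN]^\infty\to Q$ defined by $f(a)=q_{\min a}$ depends only on $\min a$ and is therefore continuous, hence Borel for the discrete topology on $Q$. By the definition of a BQO there is an infinite $A\subseteq\NN$ with $f(a)\leqq f(a\setminus\min a)$ for all $a\in[A]^\infty$; applying this to $a=A$ and letting $n_0<n_1$ be the two least elements of $A$ yields $q_{n_0}\leqq q_{n_1}$, contradicting $q_{n_0}\nleqq q_{n_1}$. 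Hence the strict part $<_Q$ of the quasi-order (where $r<_Q q$ means $r\leqq q$ and $q\nleqq r$) is well-founded, and in particular every nonempty $B\subseteq Q$ has a $<_Q$-minimal element.

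For step (ii), let $P$ be a statement such that $P(q)$ holds whenever $P(r)$ holds for all $r$ with $q\nleqq r$, and suppose for contradiction that $B=\{q\in Q:\neg P(q)\}$ is nonempty. Choose $q\in B$ that is $<_Q$-minimal. Every $r<_Q q$ then lies outside $B$, so $P(r)$ holds; since the set $\{r:r<_Q q\}$ is exactly the collection of elements of $\{r:q\nleqq r\}$ satisfying $r\leqq q$ — which is the stock of predecessors over which the principle permits us to assume $P$ — we obtain $P(q)$, contradicting $q\in B$. Hence $B=\emptyset$, so $P$ holds throughout $Q$. I expect the only load-bearing point to be the well-foundedness in step (i); this is exactly the place where the topological formulation of BQO's is convenient, since the witnessing function $a\mapsto q_{\min a}$ is visibly continuous and no appeal to Galvin--Prikry or to a minimal bad array is needed. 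The remainder is routine, the one thing worth making explicit being that the inductive hypothesis supplied at stage $q$ is $P$ on the strict $\leqq$-predecessors of $q$, which is how the set $\{r:q\nleqq r\}$ in the statement is to be read in applications.
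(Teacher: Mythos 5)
Your step (i) is correct and is exactly the right way to exploit the topological definition: the map $a\mapsto q_{\min a}$ is Borel for the discrete topology, and the BQO property applied to it rules out the offending sequence. The gap is in step (ii), and it is not cosmetic. The set $\{r\in Q: q\nleqq r\}$ is strictly larger than the set of strict predecessors $\{r: r\leqq q \text{ and } q\nleqq r\}$: it also contains every $r$ incomparable with $q$. At a $<_Q$-minimal counterexample $q$ you therefore know the statement only on a proper subset of the set over which the inductive step requires it, and the contradiction does not close. Your closing remark --- that $\{r: q\nleqq r\}$ ``is to be read'' as the strict predecessors --- replaces the principle by a genuinely weaker one, and it is not the form this paper actually uses: in the proof of Lemma 3.4 the inductive hypothesis is applied to the entire sub-quasi-order $\{q\in Q: r\nleqq q\}$, incomparable elements included. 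Indeed, read as an element-by-element induction over the relation $r\prec q \iff q\nleqq r$, the literal scheme is not sound in general: in the BQO consisting of two incomparable copies of $\omega$, the alternating sequence $(0,0),(0,1),(1,0),(1,1),\dots$ is $\prec$-descending, so $\prec$ is ill-founded even though $<_Q$ is well-founded there.

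The principle should instead be proved as an induction on the sub-quasi-orders $Q_r=\{q\in Q: r\nleqq q\}$ themselves, and its soundness rests on something stronger than well-foundedness of $<_Q$, namely that $Q$ admits no infinite bad sequence: no $q_0,q_1,\dots$ with $q_i\nleqq q_j$ for all $i<j$. Your Borel-map argument proves this verbatim --- simply start from a bad sequence instead of deriving $q_i\nleqq q_j$ from a descending chain --- since the homogeneous set $A$ again yields $q_{n_0}\leqq q_{n_1}$ with $n_0<n_1$. With that in hand the induction closes: if the statement failed for $Q=Q_0$, the inductive step would produce $r_0\in Q_0$ with the statement failing for $Q_1=\{q\in Q_0: r_0\nleqq q\}$, then $r_1\in Q_1$, and so on; since $r_j\in Q_j\subseteq\{q: r_i\nleqq q\}$ for all $i<j$, the sequence $(r_n)$ is bad, a contradiction. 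Note that the recursion must remove the whole up-set of $r_n$ at each stage precisely so that $r_i\nleqq r_j$ holds for all $i<j$ and not merely for consecutive indices; this is what a minimal-counterexample argument over $<_Q$ cannot deliver. So: keep step (i) but strengthen its conclusion from ``no strictly descending chain'' to ``no bad sequence,'' and replace the minimal-counterexample packaging by the recursion just described.
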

We will prove our desired results by coding our BQO's onto trees and utilizing the infinite tree theorem as Laver did.
\begin{definition}
	We denote $\mathcal{FT}_Q $ $\mathcal{T}_Q$ to be the class of all rooted trees that are finite or have height $\leq \omega$ respectively indexed by a quasi order $(Q,\leqq)$. We define the following quasi orders  $\leqq_1 $, $\leqq_I$ $\leqq_s$ and $\leqq_m$ on $\mathcal{T}_Q$.
	\begin{itemize}
		\item $(T,l) \leqq_1 (S,m) \iff$ there exists a injective $f:T\rightarrow S$ such that $ f(t_1\wedge t_2) = f(t_1)\wedge f(t_2)$ and $l(t) \leqq m(f(t)) $.
		\item $(T,l) \leqq_I (S,m) \iff$ $T=S$ and $l(t)\leqq m(t)$. 
		\item $(T,l) \leqq_s (S,m) \iff$ there exists a map $f:T\rightarrow S$ such that $t_1<_T t_2 \Rightarrow f(t_1)<_S f(t_2)$ such that $l(t) \leqq m(f(t)) $.
		\item $(T,l) \leqq_m (S,m) \iff$ there exists a map $f:T\rightarrow S$ such that $t_1\leq_T t_2 \Rightarrow f(t_1)\leq_S f(t_2)$ such that $l(t) \leqq m(f(t)) $.
	\end{itemize}   
\end{definition}
\begin{fact}(Infinite tree theorem)
	$Q$ BQO $\Rightarrow$ $ \mathcal{FT}_Q $ is BQO under $\leqq_1$ and $\leqq_m$, $\mathcal{T}_Q$ is BQO under $\leqq_1$ and $\leqq_s $. \cite{Nash1} \cite{Laver1}
\end{fact}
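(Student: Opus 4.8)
The plan is to run the Nash-Williams minimal bad array argument in the topological (Galvin-Prikry) formulation of BQO above, exploiting the recursive structure of trees. First I would reduce the four assertions to two: if $R\subseteq R'$ are quasi-orders on one set $X$ and $(X,R)$ is BQO then so is $(X,R')$ (a bad array for $R'$ is already bad for $R$), and a map $f$ witnessing $(T,l)\leqq_1(S,m)$ is automatically $\leq_T$-monotone --- since $t_1\leq_T t_2$ gives $f(t_1)=f(t_1\wedge t_2)=f(t_1)\wedge f(t_2)\leq_S f(t_2)$ --- and, being injective, even $<_T$-monotone; hence $\leqq_1\,\subseteq\,\leqq_m$ and $\leqq_1\,\subseteq\,\leqq_s$. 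So it suffices to show $(\mathcal{FT}_Q,\leqq_1)$ and $(\mathcal{T}_Q,\leqq_1)$ are BQO. Call a Borel $f:[\NN]^\infty\to X$ a \emph{bad array} if no infinite $A$ makes it good; then $X$ is BQO iff it admits no bad array, and by Galvin-Prikry applied to the Borel set $\{a:f(a)\leqq f(a\setminus\min a)\}$ every bad array may be shrunk to one that is \emph{perfectly bad}: $f(a)\not\leqq f(a\setminus\min a)$ for all $a$ in its new domain.

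Next I would record two closure facts from the standard BQO toolkit, obtained by the same method in the simpler world of sequences (so no circularity arises): if $X$ is BQO then the set $X^{<\omega}$ of finite sequences over $X$, and the set of transfinite (countable ordinal length) sequences over $X$, are BQO under the domination order $\langle x_i\rangle\sqsubseteq\langle y_j\rangle$ iff some injection $\iota$ satisfies $x_i\leqq y_{\iota(i)}$ for all $i$; the same holds for finite and countable \emph{multisets}, and a finite product of BQOs is BQO. The structural ingredient is a recursive description of $\leqq_1$: for a rooted $Q$-labeled tree $(T,l)$ set $\rho(T)=l(\mathrm{root}\,T)\in Q$ and let $\partial(T)$ be the multiset of immediate subtrees of $\mathrm{root}\,T$ with their induced labellings. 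Since $\mathrm{root}\,T\leq_T t$ for every $t$, any $\leqq_1$-embedding $f$ sends $\mathrm{root}\,T$ to $\min f(T)$, and meet-preservation then forces distinct immediate subtrees of $\mathrm{root}\,T$ into distinct immediate subtrees of $f(\mathrm{root}\,T)$; this gives the dichotomy: $(T,l)\leqq_1(S,m)$ holds iff either $(T,l)\leqq_1 S'$ for some immediate subtree $S'$ of $S$, or $\rho(T)\leqq\rho(S)$ and $\partial(T)\sqsubseteq\partial(S)$ in the multiset order induced by $\leqq_1$.

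For $(\mathcal{FT}_Q,\leqq_1)$ this closes the matter. Suppose it is not BQO, and let $\prec$ be the well-founded strict partial order on finite trees with $X\prec T$ iff $X$ is isomorphic to the subtree of $T$ rooted at $v$ for some $v\neq\mathrm{root}\,T$ (well-founded since the vertex count strictly drops). The minimal bad array lemma then supplies a perfectly bad array $f$ that is $\prec$-minimal, with the consequence that the quasi-order of all trees $X$ satisfying $X\prec f(a)$ for some $a$ is BQO; in particular every tree occurring in any $\partial(f(a))$ lies in this BQO. By the closure facts the finite multisets over it form a BQO, and $Q$ is BQO, so the array $a\mapsto(\rho(f(a)),\partial(f(a)))$ into the product is good: for some $a$ we get $(\rho(f(a)),\partial(f(a)))\leqq(\rho(f(b)),\partial(f(b)))$ with $b=a\setminus\min a$, and the dichotomy then yields $f(a)\leqq_1 f(b)$, contradicting badness. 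For $(\mathcal{T}_Q,\leqq_1)$ the same template applies, with the finite-multiset closure replaced by the countable-multiset (transfinite-sequence) one, since $\partial(T)$ may now be infinite.

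The real work sits in two places, and the second is the genuine obstacle. First, the minimal bad array lemma itself must be set up carefully in the Galvin-Prikry framework so that ``the $\prec$-predecessors of the values of a minimal bad array form a BQO'' is exactly what minimality delivers; this requires moving between an array on $[\NN]^\infty$ and an auxiliary array enumerating those predecessors, all while preserving Borelness and tracking the front on which things are defined. Second, on $\mathcal{T}_Q$ the relation ``immediate subtree of'' is \emph{not} well-founded --- a tree of height $\omega$ may have an immediate subtree of height $\omega$, and other natural candidates (such as ``proper downward-closed subtree'') fail likewise --- so the $\prec$ above is unavailable and one must instead run the minimal bad array argument against a subtler well-founded relation; this is precisely the heart of Laver's refinement in \cite{Laver1} over the finite-tree theorem of \cite{Nash1}. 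Once the height-$\omega$ case is handled, lifting a multiset embedding to a tree embedding and deducing the $\leqq_s$ and $\leqq_m$ statements are routine.
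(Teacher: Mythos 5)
The paper offers no proof of this Fact; it is imported wholesale by citation from Nash-Williams and Laver, so there is no in-paper argument to compare yours against and I can only judge the sketch on its own terms. Your reductions are correct and worth keeping: the computation $f(t_1)=f(t_1\wedge t_2)=f(t_1)\wedge f(t_2)\leq_S f(t_2)$ does show every $\leqq_1$-witness is $\leq$-monotone, injectivity upgrades this to strict monotonicity, and BQO passes upward along inclusion of relations, so everything reduces to the two $\leqq_1$ statements. The root-fixing dichotomy for $\leqq_1$ is also right (meet-preservation plus injectivity forces distinct immediate subtrees of the root into distinct immediate subtrees of the image), and the $\mathcal{FT}_Q$ half is the standard Nash-Williams argument: ``proper rooted subtree'' is a well-founded partial ranking contained in $\leqq_1$, minimality of the bad array makes the relevant predecessors BQO, and the finite-multiset and product closure facts finish it.

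The genuine gap is exactly where you flag it, and flagging it does not fill it: the claim that $\mathcal{T}_Q$ is BQO under $\leqq_1$ (hence under $\leqq_s$) \emph{is} the infinite tree theorem, and your sketch ends that case with ``one must run the argument against a subtler well-founded relation; this is the heart of Laver's refinement.'' That sentence is a citation, not a step --- you have reproved the easy half and deferred the hard half to the very references the Fact already carries. Concretely, what breaks is that for a height-$\omega$ tree the members of $\partial(T)$ can be as complicated as $T$ itself (the complete binary tree is isomorphic to each of its immediate subtrees), so no rank drops at the recursive call and your induction has no base; the known proofs do not rescue this by finding a cleverer well-founded relation on single trees but by changing the objects the minimal bad array ranges over and invoking the full transfinite-sequence theorem, which is precisely why the hypothesis must be BQO rather than WQO. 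Two smaller points: the paper's $\mathcal{T}_Q$ places no bound on branching, so $\partial(T)$ may have arbitrary cardinality and ``countable multiset'' closure is not sufficient --- you need domination over arbitrary index sets; and the passage from ``minimal bad array'' to ``the $\prec$-predecessors of its values form a BQO'' needs the locality hypothesis of the minimal bad array lemma stated and verified, not just gestured at. If you intend to use the Fact as a black box, cite it as the paper does; if you intend to prove it, the $\mathcal{T}_Q$ case still has to be written.
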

Elements in these sets will be denoted as pairs $(T,l)$, where $T$ is a rooted tree and $l:T\rightarrow Q$ is the labeling.
\begin{definition}
	Given a $q\in Q$, we define $1^q$ to be the one point tree indexed by $q$. 
\end{definition}
\begin{definition}
	Given $Q$ labeled trees $T_i$ indexed by some set $i\in I$, we define the tree $T=[q: T_i \; i\in I]$ to be the tree whose root is labeled by $q$ and branches into $T_i$ for each $i\in I$ i.e the immediate successors of the root, are the roots of $T_i$. In the case $I$ is linearly ordered, the lexicographical order of $T$ will be determined by $I$ in the natural way. 
\end{definition}
It may not be clear why the $\leqq_m$ relation would be useful, as $\leqq_1$ is a lot simpler and easier to understand. However, $\leqq_m$ will allow us to use Laver's covering theorem to take control of the number of treetops (sometimes referred to as leaves) our finite trees will have for the proof of theorem 1.2.
\begin{definition}
	Let $Q$ be a better quasi order. Take $W\subseteq \mathcal{FT}_Q $. We call $W^\prime \subseteq \mathcal{FT}_Q$ a cover of $W$ if for every $(T,l)\in W$ $\exists (S,m)\in W^\prime$ such that $S$ is a subtree of $T$ and $(T,l)\equiv_m (S,m) $. We say $W$ is $n$-coverable if there is a cover $W^\prime$ for which each $(S,m)\in W^\prime$ has at most $n$ treetops.
\end{definition} 
\begin{fact}(Laver's covering theorem)
	Let $Q$ be a better quasi order. For every $W\subseteq \mathcal{FT}_Q$, there is an $n$ for which $W$ is $n$-coverable. \cite{Laver2}
\end{fact}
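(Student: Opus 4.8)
The plan is to follow Laver \cite{Laver2}. Call $(T,l)\in\mathcal{FT}_Q$ \emph{$m$-reduced} if no proper subtree $S\subsetneq T$ satisfies $(S,l|_S)\equiv_m(T,l)$. First I would reduce the statement to reduced trees. For each $(T,l)\in W$ fix, using that $T$ is finite, a subtree $T'$ of $T$ with the fewest treetops among those with $(T',l|_{T'})\equiv_m(T,l)$; this $(T',l|_{T'})$ is $m$-reduced, so $W^{\ast}:=\{(T',l|_{T'}):(T,l)\in W\}$ is a cover of $W$ whose members are $m$-reduced, and any $n$-cover of $W^{\ast}$ is an $n$-cover of $W$. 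Since the only subtree of an $m$-reduced tree that is $\equiv_m$-equivalent to it is the tree itself, an $n$-cover of $W^{\ast}$ exists precisely when every member of $W^{\ast}$ has at most $n$ treetops. Hence it suffices to produce an $n=n(Q)$ bounding the treetop number of every $m$-reduced member of $\mathcal{FT}_Q$.

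Next is a pruning lemma that fixes the shape of reduced trees. If a node $v$ of an $m$-reduced tree $T$ had two distinct immediate subtrees $T'\leqq_m T''$, then deleting the whole subtree $T'$ would yield a proper subtree $S\subsetneq T$ with $T\leqq_m S$: send $T'$ into $T''$ by the witnessing monotone, label-nondecreasing map and fix everything else; monotonicity survives since every node outside $T'\cup T''$ that is comparable with a point of $T'$ lies below $v$, hence below all of $T''$ as well. Then $S\equiv_m T$, contradicting reducedness. So in an $m$-reduced tree the immediate subtrees at every node form a $\leqq_m$-antichain of $m$-reduced trees, and as $\mathcal{FT}_Q$ is BQO --- hence WQO --- under $\leqq_m$ by the infinite tree theorem, each such antichain is finite. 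Thus an $m$-reduced tree is finitely branching at every node, but a priori neither of bounded branching nor of bounded height, which is precisely the difficulty.

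The remaining step, the heart of the matter, is to upgrade ``hereditarily finitely branching'' to ``at most $n(Q)$ treetops''; this is where one must use that $\mathcal{FT}_Q$ is BQO --- not merely WQO --- under $\leqq_m$, and that $\mathcal{T}_Q$ is BQO under $\leqq_s$. I would argue by contradiction: given $m$-reduced trees $T_k$ with at least $k$ treetops, feed a Borel map $[\NN]^{\infty}\to\mathcal{FT}_Q$ that follows, along a branch, a reduced subtree of ever-larger treetop number into the defining property of a BQO, producing a perfect sequence of reduced trees together with coherent witnessing embeddings. Splitting at the root, either the number of immediate subtrees grows along a subsequence, or some immediate subtree already has unbounded treetop number at strictly smaller height; the latter case recurses on height, while in the former the coherent embeddings assemble, in the limit, into an $\leqq_s$-bad array on a single tree of height $\leq\omega$, contradicting the infinite tree theorem. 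The main obstacle is the bookkeeping that makes this descent genuinely well founded --- controlling how branching width trades off against height --- and the full argument is Laver's in \cite{Laver2}.
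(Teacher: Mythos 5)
The paper does not prove this statement at all; it is imported verbatim as a black-box citation to \cite{Laver2}, so there is no in-paper argument to compare yours against. On its own merits, your preliminary reductions are sound. The passage to $m$-reduced trees works, though minimizing the number of treetops alone does not force reducedness (a path with all labels equal has one treetop but collapses, under $\leqq_m$, to its root); you should minimize the total number of nodes instead. Your pruning lemma is correct as stated: since $\leqq_m$ asks only for a monotone label-nondecreasing map, redirecting a dominated immediate subtree into its dominating sibling does witness $T\leqq_m S$, so the immediate subtrees at each node of a reduced tree form a $\leqq_m$-antichain, and WQO-ness of $\mathcal{FT}_Q$ under $\leqq_m$ makes each such antichain finite.

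The genuine gap is the third step, which is the entire content of the theorem, and the dichotomy you sketch does not close. In the horn where the root branching of the $T_k$ grows without bound, you only obtain arbitrarily large \emph{finite} antichains under $\leqq_m$; a WQO (indeed a BQO) is perfectly compatible with that, so no contradiction with the infinite tree theorem is reached. In the other horn, recursing into an immediate subtree with many treetops lowers nothing that is a priori bounded --- reduced trees in the family may have unbounded height --- so the descent is not well founded on its own. What actually makes Laver's argument work is the minimal bad array machinery: one assumes a counterexample minimal with respect to a carefully chosen rank and derives a bad $\leqq_s$-array of \emph{strictly smaller} trees, and it is the minimality, not the raw BQO property, that supplies the contradiction. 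Since you explicitly defer this step back to \cite{Laver2}, your proposal, like the paper, ultimately cites the covering theorem rather than proving it; the surrounding reductions are correct but do not constitute the proof.
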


\subsection{Basics of Linear Orders}
First, we will start with some notation. The letters L and M will be reserved for linear orders. 
\begin{definition}
	Given a linear order $L$, we define its order type $\text{tp}(L)$ to be the class of all linear orders isomorphic to $L$.
\end{definition}
Greek letters such as $\phi$ $\psi$ and $\varphi$ will be reserved for types. We will sometimes conflate linear orders with their types. We will try to do this as infrequently as possible, though we will ignore this rule entirely for special types like the rational type $\QQ$, regular cardinals $\kappa$ and the minimal Countryman types $C$, $C^*$. We now define the main quasi order on types that will be of interest to us. 
\begin{definition}
	Given two linear orders $L$ and $M$ of type $\phi$ and $\psi$ respectively, we say $\phi\leqq \psi$ if there exists an embedding of $L$ into $M$. If $\phi \leqq \psi $ and $\psi \leqq \phi$, we say $\phi \equiv \psi$. 
\end{definition}
The equivalence relation $\equiv$ is some times referred to as the biembeddability relation. There are many algebraic operations one can define on types. In particular, we can define sum and product
\begin{definition}
	Given $L$ and $M$ of type $ \phi$ and $\psi$ respectively, we define $\phi\cdot \psi$ as the order type of $L\times M$ with the antilexicographical ordering, 
\end{definition}
\begin{definition}
	Given $L$ and $M$ of type $\phi$ and $\psi$ respectively, we define $\phi+\psi$ to be the order type of $L\times \{0\}\cup M\times\{1\}$ with the antilexicographical ordering. 
\end{definition}
Interestingly, there is a way to describe linear ordered sums of types. This, for one gives us a lot more tools to algebraically analyze and construct types. Secondly, it also generalizes both finite sum and product. 
\begin{definition}
	Given a linear order $L$ and a collection of types $\{\phi_x:x\in L \}$, we define the type $\sum\limits_{x\in L} \phi_x$ to be $\text{tp}(\bigcup\limits_{x\in L} M_x\times\{x\}) $, where $\text{tp}(M_x)= \phi_x $ and $ \bigcup\limits_{x\in L} M_x\times\{x\}$ is ordered antilexicographically. 
\end{definition}
One can check that $\sum\limits_{x\in L} \phi \equiv \phi\cdot \text{tp}(L) $. Moreover, finite sums $\phi_1+..+\phi_k$ can be seen as the ordered sum $\phi_i$ over the linear order $\{1,...,k\}$ with the natural order. We also have a dual/reverse operation $*$ 
\begin{definition}
	Given a type $\phi$, the reverse $\phi^* $ is the type of $(L,>)$, where $\text{tp}(L,<) = \phi$. 
\end{definition}
Often to analyze an $L$-sum of types, it will be beneficial to break $L$ apart into disjoint convex pieces and work with the natural order they inherit from $L$. We will refer to this order as the block order.
\begin{definition}
	Given a linear order $(L,<)$ and a collection of disjoint intervals $\mathcal{I}$, the block order on $\mathcal{I} $ is defined by $A< B \iff \forall x\in A \forall y \in B$ $x<y$. 
\end{definition}
We will also need to work with labeled orders. They are defined near identically to labeled trees.
\begin{definition}
	Given a quasi order $(Q,\leqq)$ and a order type $\phi$, we call $\Phi = (L,l)$ a $Q$ labeled $\phi$ type if $\text{tp}(L) = \phi$ and $l:L\rightarrow Q$. Given two labeled types $\Phi = (L,l)$ and $\Psi=(M,m)$, we say $\Phi\leqq \Psi$ if there exists an embedding $f:L\rightarrow M $ such that $l(x) \leqq m(f(x)) $. For $q\in Q$, $1_q $ denotes the one pointed order labeled by $q$. 
\end{definition}

\subsection{On Ramsey Degrees}

Our interest in the final portion of this paper will be on decomposition properties held by Aronszajn lines. In particular, we will be interested in proving the existence of a Ramsey degree. 
\begin{definition}
	Given a linear order $L$, we write $L \rightarrow (L)_{k,n}^1 $ to mean that for $k\in \NN $ and any colouring $c: L\rightarrow k $, there exists $A\in [k]^n$ such that $c^{-1}(A)$ contains an isomorphic copy of $L$. If this holds for all $k$, we say $L$ has big Ramsey degree bounded by $n$. 
\end{definition}
\begin{fact}
	If $L \rightarrow (L)_{k,n}^1 $ and $\text{tp}(L) \equiv \text{tp}(M)$, then $M \rightarrow (M)_{k,n}^1 $.
\end{fact}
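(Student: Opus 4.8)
The plan is to chase the two embeddings witnessing $\text{tp}(L) \equiv \text{tp}(M)$ around the colouring. Fix embeddings $e \colon L \to M$ and $f \colon M \to L$, which exist since $\text{tp}(L) \leqq \text{tp}(M)$ and $\text{tp}(M) \leqq \text{tp}(L)$. Let $c \colon M \to k$ be an arbitrary colouring. The idea is to transport $c$ to $L$, invoke the Ramsey property there, and transport the resulting copy back into $M$.

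First I would pull $c$ back along $e$ to obtain the colouring $\bar c := c \circ e \colon L \to k$. Applying $L \rightarrow (L)_{k,n}^1$ to $\bar c$ yields a set $A \in [k]^n$ together with a subset of $L$ of type $\text{tp}(L)$ contained in $\bar c^{-1}(A)$; fix an order isomorphism $\iota \colon L \to L$ onto such a subset. Thus $\iota$ is an order embedding of $L$ into itself with $\bar c(\iota(x)) \in A$ for every $x \in L$, equivalently $c(e(\iota(x))) \in A$ for every $x \in L$.

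Next I would push forward into $M$ by considering the composite $g := e \circ \iota \circ f \colon M \to M$. Being a composition of order embeddings, $g$ is itself an order embedding, so its image $g[M] \subseteq M$ has type $\text{tp}(M)$ and is therefore an isomorphic copy of $M$. Moreover, for every $y \in M$ we have $g(y) = e(\iota(f(y)))$, and since $\bar c(\iota(f(y))) \in A$ this gives $c(g(y)) \in A$; hence $g[M] \subseteq c^{-1}(A)$. So $c^{-1}(A)$ contains an isomorphic copy of $M$ with $A \in [k]^n$, and since $c$ was arbitrary, $M \rightarrow (M)_{k,n}^1$.

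There is no real obstacle here — the statement is essentially a diagram chase. The only point that requires minor care is the bookkeeping of the directions of $e$ and $f$: one must sandwich the self-embedding $\iota$ of $L$ between $f \colon M \to L$ and $e \colon L \to M$ in the correct order so that the composite is a self-map of $M$, and one must recall that ``contains an isomorphic copy of $L$'' supplies an honest order embedding $\iota$ of $L$ into $L$, so that its composition with $e$ and $f$ is defined and again an embedding onto a copy of $M$.
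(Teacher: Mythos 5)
Your argument is correct and is exactly the routine transport-of-structure argument the paper has in mind when it states this as a Fact without proof: pull the colouring back along $e$, apply the Ramsey property of $L$, and push the resulting copy forward via $e \circ \iota \circ f$ to land a copy of $M$ inside $c^{-1}(A)$. Nothing further is needed.
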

Consequently, rather than speak about a particular linear order, we can instead adopt the notation $\forall k \in \omega $ $\phi \rightarrow (\phi)_{k,n}^1$ to mean any linear order of type $\phi$ has big Ramsey degree bounded by n. 
\begin{lemma}
	Given order types $\phi, \psi$, if $\phi \rightarrow (\phi)_{k,n_1}^1 $ and $\psi \rightarrow (\psi)_{k,n_2}^1$, then $\phi+\psi \rightarrow (\phi+\psi)_{k,n_1+n_2}^1 $.
\end{lemma}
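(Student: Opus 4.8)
The plan is to prove the statement directly from the definitions, building a colouring-respecting copy of $\phi+\psi$ inside any $k$-colouring by splitting the target into its two summands. Fix a linear order $L$ of type $\phi+\psi$, so that $L = L_0 + L_1$ where $\mathrm{tp}(L_0) = \phi$ and $\mathrm{tp}(L_1) = \psi$, with every element of $L_0$ below every element of $L_1$ in the block order. Let $c : L \to k$ be an arbitrary colouring. First I would restrict $c$ to $L_0$; since $\phi \to (\phi)^1_{k,n_1}$, there is a set $A_0 \in [k]^{n_1}$ and an embedded copy $L_0' \subseteq L_0$ with $\mathrm{tp}(L_0') = \phi$ and $c(L_0') \subseteq A_0$. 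Symmetrically, restricting $c$ to $L_1$ and using $\psi \to (\psi)^1_{k,n_2}$ yields $A_1 \in [k]^{n_2}$ and a copy $L_1' \subseteq L_1$ with $\mathrm{tp}(L_1') = \psi$ and $c(L_1') \subseteq A_1$.

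Now set $L' = L_0' \cup L_1'$ and $A = A_0 \cup A_1$. Because $L_0' \subseteq L_0$ and $L_1' \subseteq L_1$, every element of $L_0'$ precedes every element of $L_1'$ in $L$, so the block order on $\{L_0', L_1'\}$ agrees with the given order; hence $\mathrm{tp}(L') = \phi + \psi$ by the definition of the sum of types. Moreover $c(L') \subseteq A_0 \cup A_1 = A$, and $|A| \leq |A_0| + |A_1| = n_1 + n_2$. If $|A| < n_1 + n_2$ we simply enlarge $A$ to any superset of size $n_1 + n_2$ (or at most $k$; if $k < n_1+n_2$ the statement is vacuous since $A = k$ works). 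This exhibits $L' \in c^{-1}(A)$ isomorphic to $L$, which is exactly $\phi+\psi \to (\phi+\psi)^1_{k, n_1+n_2}$. Since $k$ was arbitrary, we are done.

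I do not expect any genuine obstacle here; the only points requiring a moment of care are (i) checking that the two sub-copies, living in disjoint convex blocks of $L$, concatenate to a copy of the correct type — this is immediate from the block-order description of $\sum_{x\in L}\phi_x$ in the preliminaries — and (ii) the bookkeeping that $A_0$ and $A_1$ need not be disjoint, so $n_1 + n_2$ is an upper bound rather than an exact count, which only helps. One could also phrase the argument so as to invoke Fact 2.5 (invariance of the Ramsey property under biembeddability) to reduce to a convenient representative of $\phi+\psi$, but working with the explicit representative $L_0 + L_1$ is cleaner and avoids needing it.
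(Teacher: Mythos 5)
Your argument is correct and is exactly the routine concatenation argument the paper has in mind; the paper in fact states this lemma without proof, treating it as immediate. The two points of care you flag (the convex blocks concatenating to a copy of $\phi+\psi$, and $A_0\cup A_1$ possibly having fewer than $n_1+n_2$ elements) are handled properly, so nothing is missing.
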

Many well known orders have Ramsey degree bounded by $1$. For example, the rationals $\QQ$, every regular cardinal and the generalized rationals $\eta_{\alpha \beta}$ \cite{Laver1}. A weaker type of Ramsey degree is the property of algebraic indecomposability.
\begin{definition}
	We call a type $\phi$ algebraically indecomposable (AI for short) if whenever $\phi \equiv \psi_1 + \psi_2$, there is an $i\in \{1,2\}$ for which $\phi \leqq \psi_i$
\end{definition}
In particular, being AI means having Ramsey degree bounded by 1 when we restrict our class of colourings to convex ones. For the class of $\sigma$-scattered orders, the subclass of AI types behave as the building blocks. We will show later that this remains true for Aronszajn types under PFA.  
\subsection{On Aronszajn Lines}
An Aronszajn line is any line of size $\aleph_1$ that is not isomorphic to a suborder of the reals $\RR$ and does not embed $\omega_1$ or $\omega_1^*$. There have been many constructions of Aronszajn lines. A special subclass of Aronszajn lines are Countryman lines. We call a lines $C$ Countryman if its square $C \times C$ (viewed as a product of posets, not linear orders) can be decomposed into countably many chains. Two classic examples of Countryman lines can be found in \cite{Saharon} and \cite{StevoWalks}. The latter of the two is also minimal in that for any Aronszajn line $A$ contains a copy of it or its reverse as a suborder. We will fix the name $C$ for this line.
\begin{fact}
	For any $L\in \{C,C^*,\QQ\}$, there exists a collection of disjoint intervals $\mathcal{I}$ which is isomorphic to $L$ under the block order. 
\end{fact}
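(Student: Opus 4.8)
The plan is to treat the three cases in turn, after one reduction. Observe first that $C^{*}$ reduces to $C$: a family $\mathcal{I}$ of pairwise disjoint intervals of $C$ is also a family of pairwise disjoint intervals of $C^{*}$, since convexity is insensitive to orientation, and the block order it inherits from $C^{*}$ is the reverse of the one it inherits from $C$; so if the latter has type $C$ the former has type $C^{*}$. Hence it is enough to deal with $\QQ$ and $C$.

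The case $L=\QQ$ is Cantor's theorem. The ordered sum $\sum_{q\in\QQ}\QQ$ is a countable dense linear order without endpoints, so it is isomorphic to $\QQ$; fixing such an isomorphism and transporting it back exhibits $\QQ$ as a partition into $\QQ$-many consecutive copies of $\QQ$. Taking $\mathcal{I}$ to be the set of blocks of this partition gives pairwise disjoint nonempty intervals of $\QQ$ whose block order is, by construction, $\QQ$.

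For $L=C$, I would argue as follows. Since $C$ is Aronszajn it is not isomorphic to a suborder of $\RR$, hence is non-separable, and a classical fact about linear orders then gives an uncountable family of pairwise disjoint nonempty open intervals of $C$; enlarge it by Zorn's lemma to a family $\mathcal{I}$ maximal among such families, and let $L'$ denote $\mathcal{I}$ equipped with the block order. Selecting one point inside each member of $\mathcal{I}$ defines an order embedding of $L'$ into $C$, so $L'\leqq C$; as $L'$ is uncountable and embeds into the Aronszajn line $C$, it is itself Aronszajn. By minimality of $C$ we have $C\leqq L'$ or $C^{*}\leqq L'$, and the second case is impossible: it would yield $C^{*}\leqq L'\leqq C$, that is, an order-reversing injection $g\colon C\to C$, whence $\{(x,g(x)):x\in C\}$ would be an uncountable antichain in the poset $C\times C$ --- absurd, since $C$ is Countryman and $C\times C$ is therefore a countable union of chains. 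Thus $C\leqq L'\leqq C$, so the block order on $\mathcal{I}$ is biembeddable with $C$, which is what we want.

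The subtle point --- and where I expect the real work to lie --- is the force of the word ``isomorphic'' in the statement. The argument above only produces a family whose block order is biembeddable with $C$, which seems to be all the later shuffle constructions actually need; a maximal family does not, by itself, control the exact order type. To get a genuine isomorphism I would instead construct $\mathcal{I}$ by a back-and-forth recursion of length $\omega_{1}$, building in tandem an isomorphism from a copy of $C$ onto the block structure under construction. The recursion can be carried through because $C$ is homogeneous in the relevant sense: every uncountable initial segment and every uncountable final segment of $C$ is again biembeddable with $C$ (apply the minimality-plus-Countryman argument to the segment to rule out its reverse), so any finite partial configuration extends. I would package this homogeneity property as a separate lemma and then run the standard back-and-forth argument on top of it; that the intervals so produced are non-degenerate is not an issue, as the standard minimal Countryman line is densely ordered.
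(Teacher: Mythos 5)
Your reduction of $C^*$ to $C$ and your treatment of $\QQ$ are fine, and your endgame (minimality of $C$ plus the Countryman property to rule out $C^*\leqq L'$) is the standard and correct way to identify the block order once an uncountable family of disjoint intervals is in hand. But the step you dismiss as ``a classical fact about linear orders'' --- that a non-separable linear order admits an uncountable family of pairwise disjoint nonempty open intervals --- is false in ZFC: a line that is ccc but not separable is exactly a Souslin line, and their nonexistence is the Souslin Hypothesis, not a theorem. This is precisely where the paper locates the entire content of the fact for $C$: it says the $C$ case ``requires a small forcing argument'' whose point is to show that $C$ (hence $C^*$) contains \emph{no Souslin suborder}; only then does non-separability yield the uncountable antichain of intervals you start from. (Under $\text{MA}_{\aleph_1}$, which the paper's standing hypothesis PFA supplies, there are no Souslin lines at all, but one still has to invoke this rather than a ``classical fact.'') So your proposal skips the crux of the argument.

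Your closing worry about ``isomorphic'' versus ``biembeddable'' is well placed --- maximality alone does not control the order type of $\mathcal{I}$ --- but the repair you sketch is also under-powered: knowing that uncountable initial and final segments of $C$ are \emph{biembeddable} with $C$ does not by itself let a length-$\omega_1$ back-and-forth close off to an isomorphism; one needs genuine homogeneity of the specific minimal Countryman line (e.g.\ that appropriate $\aleph_1$-dense suborders are isomorphic, again a consequence of $\text{MA}_{\aleph_1}$ for this $C$), which is part of what the paper is delegating to the cited construction in Martinez-Ranero's paper.
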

The above fact will be vital for us. It is trivial to prove for $\QQ$, while for $C$ it requires a small forcing argument. In particular, it suffices to show that $C$ (and consequently $C^*$) contain no Souslin suborder. One can find the argument in \cite{Carlos} along with the following.
\begin{fact}{($\text{MA}_{\aleph_1} $)}
	For any $L\in \{C,C^*,\QQ\}$, $L^2 \equiv L $.
\end{fact}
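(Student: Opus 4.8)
\emph{Proof proposal.} One direction, $L\leqq L^2$, is automatic in every case: since the antilexicographic order on $L\times L$ compares second coordinates first, for a fixed $y_0\in L$ the map $x\mapsto (x,y_0)$ order-embeds $L$ into $L^2=\sum_{y\in L}L$. So everything reduces to producing an embedding $L^2\hookrightarrow L$. For $L=\QQ$ this needs no hypothesis: $\QQ^2$ is a countable dense linear order without endpoints, hence $\cong\QQ$ by Cantor's back-and-forth. And since $(C^*)^2=(C^2)^*$, once we have $C^2\leqq C$ we get $(C^*)^2\leqq C^*$ by reversing the witnessing embedding. So the entire content is the single statement $C^2\leqq C$.

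For this I would build an embedding $C^2\hookrightarrow C$ by forcing with finite approximations under $\mathrm{MA}_{\aleph_1}$. As usual we may take $C$ dense without endpoints. Let $\PP$ be the poset of all finite partial order-preserving injections $p:C^2\to C$, ordered by reverse inclusion. For each $s\in C^2$ the set $D_s=\{p\in\PP:s\in\operatorname{dom}p\}$ is dense: the range of any condition is a finite subset of $C$, each of whose gaps is nonempty, so $s$ can be sent into the gap matching its position relative to $\operatorname{dom}p$. There are only $\aleph_1$ such sets, so if $\PP$ is ccc then $\mathrm{MA}_{\aleph_1}$ yields a filter meeting them all, and its union is a total order-embedding $C^2\to C$. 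Thus the task reduces to showing $\PP$ is ccc.

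Given an uncountable family of conditions, I would refine it --- using the $\Delta$-system lemma together with the pigeonhole principle on the finitely many possible quantifier-free types --- to an uncountable subfamily whose domains form a $\Delta$-system with root $\bar s$, whose ranges form a $\Delta$-system with root $\bar c$, in which all conditions agree on $\bar s$, and in which the position of each non-root point relative to the roots (in $C^2$, resp.\ in $C$) is fixed. Compatibility of two such conditions then reduces to checking the finitely many ``cross comparisons'' between the non-root domain points of one condition and those of the other; coding a condition by the tuple of its non-root domain points (elements of the linear order $C^2$) together with its non-root image points (elements of $C$), this is precisely the demand that two such tuples be suitably comonotone inside a fixed finite product assembled from copies of $C$ and of $C^2$. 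This is where the hypothesis enters: one argues that $C\cdot C$ is again Countryman and that the finite products entering the amalgamation are countable unions of chains, so that among uncountably many tuples two must lie on a common chain, producing a compatible pair. I expect this inheritance point --- that being Countryman survives the passage to $C\cdot C$ and to the products used in the amalgamation --- to be the main obstacle; granting it, the ccc of $\PP$, and hence $C^2\equiv C$ together with its $C^*$ counterpart, follows routinely.
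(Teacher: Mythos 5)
The paper does not actually prove this Fact: it is imported from Martinez-Ranero's paper, and the argument indicated there is quite different from yours. It runs through Fact 2.4: under $\mathrm{MA}_{\aleph_1}$ there are no Souslin lines, so $C$ is not ccc and one can extract a family $\mathcal{I}$ of pairwise disjoint uncountable intervals of $C$ whose block order is isomorphic to $C$; each $I\in\mathcal{I}$ is an uncountable suborder of $C$, hence an Aronszajn line which cannot embed $C^*$ (a decreasing uncountable map on a suborder of $C$ would be an uncountable antichain in the Countryman square), and so by the minimality/homogeneity of $C$ it contains a copy of $C$. Therefore $C\cdot C=\sum_{y\in C}C\leqq\sum_{I\in\mathcal{I}}\text{tp}(I)\leqq C$. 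Your reductions (the $\QQ$ case by Cantor's back-and-forth, $L\leqq L^2$ via $x\mapsto(x,y_0)$, and the $*$-duality) are all correct, so everything does ride on $C^2\leqq C$, as you say; but the cited proof gets this in three lines from Fact 2.4, which the paper already has in hand, with no new forcing.

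Your direct forcing route has a genuine gap, and it is not quite where you place it. The point you flag as the main obstacle is in fact routine: the coordinatewise partial order on $C^3$ refines the order whose chains are graphs of increasing partial maps from $C\cdot C$ to $C$, so $(C\cdot C)\times C$ is a countable union of chains, and likewise $C\cdot C$ is again Countryman. The real problem is that this does not deliver ccc. Placing two conditions' coded tuples on a common chain of $(C\cdot C)^{k}\times C^{k}$ gives only the parallel comparisons (the $i$-th point of $p$ against the $i$-th point of $q$); compatibility of $p\cup q$ requires every cross comparison $a_i^{p}$ versus $a_j^{q}$, $i\neq j$, to be mirrored by $b_i^{p}$ versus $b_j^{q}$, and coordinatewise comparability of the two tuples does not control these: one can have $a_i^{p}\le a_i^{q}$ and $b_i^{p}\le b_i^{q}$ for all $i$ while the domains and ranges interleave incompatibly. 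This is precisely the classical difficulty in every ccc proof for isomorphism posets of Countryman lines, and it is overcome using finer structure of the underlying coherent tree (oscillations), not the chain decomposition alone. So the crux of your argument is missing rather than merely deferred; I would recommend replacing the forcing by the interval-decomposition argument above.
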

Note, this is generally true for $\QQ$ and does not rely on Martins axiom. Another critical fact we will need is the existence of a universal Aronszajn line. Having one will mean that we can construct an $\omega_2$ sequence of AI Aronszajn lines cofinal under $\leqq$ that are constructible from $C$ under our algebraic operations. More on this will appear in the next section. 
\begin{fact}{(PFA)}
	Every Aronszajn line is either universal, or fragmented. \cite{Moore2}
\end{fact}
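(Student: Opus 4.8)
The plan is to follow Moore's argument from \cite{Moore2}. Throughout, \emph{universal} will mean universal for the class of Aronszajn lines, and I will treat \emph{fragmented} in its positive form: $L$ is fragmented iff it admits the iterated decomposition developed in Section 3.1 (a scattered-indexed sum whose blocks are Countryman or countable), equivalently iff $L$ omits a copy of a fixed ``generic'' Aronszajn line $\eta_C$ obtained by shuffling $C$, $C^*$, $\QQ$ (using that $\text{PFA}$ gives $\text{MA}_{\aleph_1}$, so $C^2\equiv C$ and the block-order Fact for $\{C,C^*,\QQ\}$ are available). With this reading the dichotomy splits into an easy half and a hard half. The easy half: if $L$ is fragmented then some finite $C$-pattern is omitted in $L$, hence $\eta_C$ --- which is itself Aronszajn and realizes every such pattern --- does not embed into $L$, so $L$ is not universal; in particular the two alternatives are mutually exclusive. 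So the content is the hard half: \textbf{if $L$ is not fragmented, then $L$ is universal.}

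Assume $L$ is not fragmented. Using the structure theory of $C$ (minimal walks and oscillation, the machinery behind the minimality of $C$), I would first upgrade ``not fragmented'' to the statement that $\eta_C$ embeds into $L$; concretely, that $L$ contains a family $\mathcal I$ of pairwise disjoint intervals isomorphic under the block order to $\QQ$ in which every finite configuration of copies of $C$ and $C^*$ is realized cofinally and coinitially --- exactly the configuration that the Fact on $\{C,C^*,\QQ\}$ block orders produces inside $\eta_C$. (If the official definition of ``fragmented'' in Section 3.1 is phrased directly in terms of omitting $\eta_C$, or some $D_\alpha$, this step is vacuous.) Now fix an arbitrary Aronszajn line $A$; the goal is an order-embedding $A\hookrightarrow L$. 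Following the five-element-basis technique I would force with the poset $\mathbb P$ whose conditions are pairs $(p,\mathcal N)$, where $p$ is a finite partial order-embedding $A\to L$ and $\mathcal N$ is a finite $\in$-increasing chain of countable elementary submodels of some $H(\theta)$ with $A,L,\eta_C\in H(\theta)$, subject to a side requirement tying $p$ to $\mathcal N$ and to a fixed coherent sequence witnessing that $C$ is Countryman. The key lemma is that $\mathbb P$ is proper, which I would verify in the standard way: for countable $M\prec H(\theta)$ with $\mathbb P,A,L,\eta_C\in M$ and $(p,\mathcal N)\in\mathbb P\cap M$, the extension $(p,\mathcal N\cup\{M\})$ is $(M,\mathbb P)$-generic, because any finite list of demands over $M$ about where to route new points of $A$ can be satisfied inside the $\{C,C^*,\QQ\}$-shuffled part of $L$ lying below the current condition without moving $p$ --- this is precisely where the shuffle structure of $\eta_C\hookrightarrow L$ is used. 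Applying PFA to $\mathbb P$ together with the $\aleph_1$ dense sets $D_a=\{(p,\mathcal N):a\in\operatorname{dom}p\}$, $a\in A$, yields a filter whose union is a total embedding $A\hookrightarrow L$. As $A$ was arbitrary, $L$ is universal.

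The main obstacle is the properness verification: one must show that the side conditions (the chain $\mathcal N$ and the markers linking $p$ to the Countryman structure of $L$) can always be extended \emph{simultaneously} with new points of $\operatorname{dom}p$, keeping the partial embedding consistent with both the global order of $A$ and the local Countryman geometry of $L$; this is the delicate core of Moore's argument, and it is where the hypothesis ``not fragmented'' (equivalently $\eta_C\hookrightarrow L$) is genuinely spent. A secondary subtlety is the upgrade step ``not fragmented $\Rightarrow\eta_C\hookrightarrow L$'', which rests on the oscillation theory for $C$ rather than on forcing, and whose difficulty depends entirely on how ``fragmented'' is formalized in Section 3.1.
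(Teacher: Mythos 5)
The paper does not prove this fact at all; it is imported as a black box from \cite{Moore2}, so there is no internal argument to compare yours against. Judged on its own terms, your outline gets the shape of the dichotomy right --- the easy half (fragmented implies non-universal) and the reduction of the hard half to showing that Moore's line $\eta_C$ embeds into any non-fragmented $L$ --- but it then takes a wrong turn and leaves the actual mathematical content undischarged. Once you know $\eta_C\leqq L$, you should not be forcing anything new: Moore's main theorem is precisely that $\eta_C$ is universal, i.e.\ every Aronszajn line $A$ satisfies $A\leqq \eta_C$, and then $A\leqq\eta_C\leqq L$ gives universality of $L$ by transitivity of embeddability. Your plan instead sets up a fresh proper poset of finite partial embeddings of an arbitrary $A$ into $L$ and asserts that properness can be checked ``in the standard way.'' That assertion is the entire theorem. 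The poset of finite partial order-embeddings of one Aronszajn line into another is certainly not proper in general with the dense sets you name (under PFA there are pairwise incomparable Aronszajn lines, e.g.\ $C$ and $C^*$, for which no embedding exists, so no such poset can generically add one), so everything hinges on how the copy of $\eta_C$ inside $L$ is exploited in the genericity argument --- and that is exactly the part you do not supply. In effect your ``key lemma'' is a restatement of Moore's theorem, not a proof of it.

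Two smaller points. First, the paper never defines ``fragmented''; your working definition (omitting $\eta_C$, equivalently admitting the iterated Countryman/countable decomposition) is reasonable, but the equivalence of the intrinsic definition with ``$\eta_C\nleqq L$'' is itself a nontrivial PFA consequence, and your ``upgrade step'' silently assumes it. Second, your easy half needs that fragmentedness is inherited by suborders (so that $\eta_C\leqq L$ would force $\eta_C$ itself to be fragmented); that is true for the standard definition but should be stated. As it stands the proposal is a plan for rederiving \cite{Moore2} rather than a proof: the one step that carries all the difficulty --- the properness verification for the embedding poset targeting $\eta_C$ --- is missing.
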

In order to use these two facts, for the rest of the paper, we shall be assuming PFA. 
\section{Laver's Tree Coding Argument}

\subsection{Shuffles and Constructing AI Types}
We first start by defining a shuffle. 
\begin{definition}
	For $L\in \{ C,C^*,\QQ \}$, we call the summation $\sum\limits_{z\in L} \phi_z $ an $L$-shuffle if 
	\begin{itemize}
		\item $\forall z \in L$ $\forall (u,v) \subseteq L$, $\exists z^\prime \in (u,v)$ such that $\phi_z \leqq \phi_{z^\prime}$
	\end{itemize}
\end{definition}
Shuffles are such that every interval of $L$ contains a cofinal collection of $\{\phi_x: x\in L\} $. For example, a simple product $L\times M$ is a shuffle. There is a similar notion for regular cardinals, which we will need. Note, shuffles extend naturally to quasi order labeled types $\Phi$ via the order $\leqq$ defined on them in section 2.2. 
\begin{definition}
	Given a regular cardinal $\kappa$, we call the type $\sum\limits_{\alpha<\kappa} \phi_\alpha$  $\kappa$ (resp. $\kappa^* $) unbounded if $\forall \alpha \exists \beta>\alpha $ such that $\phi_\alpha \leqq \phi_\beta$. 
\end{definition}
Note that shuffles are equivalent up to cofinality. That is, if $\forall \phi\in \{\phi_x:x\in L  \} $ $\exists \psi \in \{\psi_x :x\in L \}$ $\phi \leqq \psi$ and vice versa, then $\sum\limits_{x\in L} \psi_x \equiv \sum\limits_{x\in L} \phi_x$. The argument for why this is true is outlined in lemma 3.2 and is heavily reliant on fact 2.5.
\begin{definition}
	Given an $L$ sum of labeled $Q$ types $\Psi = \sum\limits_{x\in L} \Psi_x$, $\mathcal{U} = \{\Psi_x : x\in L  \}$, we say $\Psi$ is universal if $\Psi \geqq \Phi $ whenever $\Phi= \sum\limits_{x\in L} \Phi_x $ and $\mathcal{V}=\{\Phi_x: x\in L\}$ is dominated by $\mathcal{U} $ i.e $\forall \Gamma \in \mathcal{V}$ $\exists \Theta \in \mathcal{U}$ $\Gamma \leqq \Theta$. 
\end{definition}
\begin{fact}
	Every $L$ shuffle for $L\in \{\QQ,C,C^*\}$, $L$ unbounded sum $L\in \{\omega,\omega^*\}$ of $\mathcal{U}$ is universal. 
\end{fact}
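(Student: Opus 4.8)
The plan is to handle the ``shuffle'' cases $L\in\{\QQ,C,C^*\}$ uniformly and the ``unbounded'' cases $L\in\{\omega,\omega^*\}$ by a separate recursion of length $\omega$. For the shuffle cases I would first extract the only consequence of the shuffle condition that actually gets used: \emph{for every non-degenerate interval $I$ of $L$ the family $\{\Psi_z:z\in I\}$ is cofinal in $\mathcal{U}$ under $\leqq$}. Indeed, given $w\in L$ one picks a non-empty open $(u,v)\subseteq I$ and applies the shuffle condition to $z=w$ and $(u,v)$ to obtain $z'\in(u,v)\subseteq I$ with $\Psi_w\leqq\Psi_{z'}$. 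Next, by Fact 2.4 (for $C,C^*$ this rests on PFA; for $\QQ$ it is elementary, e.g.\ via $\QQ\cong\QQ\cdot\QQ$) fix a family $\{I_x:x\in L\}$ of pairwise disjoint non-degenerate intervals of the index order of $\Psi$ which is isomorphic to $L$ under the block order; non-degeneracy can be arranged (if one insists on using Fact 2.4 only as literally stated, replace each interval by the convex hull of the image of a copy of $L$, using $L\equiv L^2$ from Fact 2.5). Now take any $\Phi=\sum_{x\in L}\Phi_x$ whose pieces $\mathcal{V}=\{\Phi_x:x\in L\}$ are dominated by $\mathcal{U}$. For each $x$ choose $w$ with $\Phi_x\leqq\Psi_w$; by the cofinality observation there is $z(x)\in I_x$ with $\Phi_x\leqq\Psi_w\leqq\Psi_{z(x)}$. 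Regarding $x\mapsto I_x$ as an order isomorphism from the index of $\Phi$ onto $\{I_x\}$ with the block order, $x<x'$ forces $z(x)\in I_x$ to lie below $z(x')\in I_{x'}$, so $x\mapsto z(x)$ is an order embedding of the index of $\Phi$ into that of $\Psi$; amalgamating label-respecting embeddings $\Phi_x\hookrightarrow\Psi_{z(x)}$ along it gives an embedding of $\Phi$ into $\sum_{z\in\bigcup_x I_x}\Psi_z$, hence into $\Psi$. Thus $\Psi\geqq\Phi$, i.e.\ $\Psi$ is universal.

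For the unbounded cases take $L=\omega$ ($L=\omega^*$ is symmetric). Given an $\omega$-unbounded $\Psi=\sum_{n<\omega}\Psi_n$ and any $\Phi=\sum_{n<\omega}\Phi_n$ with each $\Phi_n$ dominated by $\mathcal{U}$, I would build $m_0<m_1<\cdots$ with $\Phi_n\leqq\Psi_{m_n}$ recursively: at stage $n$, pick $w$ with $\Phi_n\leqq\Psi_w$, iterate the unboundedness condition from $w$ to produce $w<w_1<w_2<\cdots$ with $\Psi_w\leqq\Psi_{w_1}\leqq\cdots$, and let $m_n$ be some $w_j>m_{n-1}$. Then $n\mapsto m_n$ is an order embedding of $\omega$ into $\omega$, and amalgamating embeddings $\Phi_n\hookrightarrow\Psi_{m_n}$ along it gives $\Phi\leqq\Psi$.

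The step I expect to be the real obstacle is the $C$ (and $C^*$) case: since $C$ is uncountable, a naive transfinite back-and-forth between the two copies of $C$ indexing $\Phi$ and $\Psi$ breaks down, because a newly placed point may require a gap in the image that does not exist. Fact 2.4 --- equivalently the idempotency $C\equiv C^2$ --- is exactly what removes this, by laying out in advance a $C$-indexed array of disjoint non-degenerate intervals in which the shuffle condition supplies all the room one needs, so that only one representative per block is ever chosen and order-preservation comes for free from the block order. The remaining subtlety, making those intervals non-degenerate, is handled once more by $C\equiv C^2$.
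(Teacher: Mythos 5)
Your argument is correct and is essentially the argument the paper has in mind: the statement is left as an unproved Fact whose technique the paper explicitly defers to the proof of Lemma 3.2, and that technique is precisely your combination of a disjoint family of intervals block-isomorphic to $L$ (the paper's Fact 2.5, not 2.4) with the observation that a shuffle's summands are cofinal on every non-degenerate interval, followed by amalgamation of one representative per block. The $\omega$/$\omega^*$ recursion is the standard one and matches the intended reading of the unboundedness condition.
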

Consequently, universal types over $L\in \{\omega,\omega^*,\QQ,C,C^*\}$ are simply $L$-shuffles or $L$-unbounded sums respectively up to equivalence. 
\\
\\
One nice property of shuffles is that every fragmented Aronszajn line can be embedded into a recursively constructed shuffle. Martinez Ranero originally constructed these lines, though the notion of shuffle was not considered.  
\begin{fact}{($\text{MA}_{\aleph_1}$)}
	If $\phi$ is a fragmented Aronszajn type, there exists an $\alpha<\omega_2 $ such that $\phi\leqq \text{tp}(D_\alpha^{+})$, where $D_\alpha^+$ is recursively defined as $C$-shuffles like so
	\begin{itemize}
		\item $D_0^+ =C$
		\item $D_0^- = C^*$
		\item For $\alpha = \beta+1$ $D_\alpha^+= C\times D_\beta^-$
		\item For $\alpha = \beta+1$ $D_\alpha^-= C^*\times D_\beta^+$
		\item For $\alpha$ limit $D_\alpha^+$ is a $C$ shuffle of $\{D_\beta^+: \beta<\alpha \}$
		\item For $\alpha$ limit $D_\alpha^-$ is a $C^*$ shuffle of $\{D_\beta^-: \beta<\alpha \}$
	\end{itemize}
\end{fact}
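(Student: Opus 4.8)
The plan is to isolate a Hausdorff-style rank $\rho$ on fragmented Aronszajn types and then run a transfinite induction showing that $D^{+}_{\alpha}$ and $D^{-}_{\alpha}$ jointly absorb every type of rank at most $\alpha$. Concretely, I would prove by induction on $\alpha<\omega_{2}$ that there is a non-decreasing $g\colon\omega_{2}\to\omega_{2}$ with $g(0)=1$ such that $\rho(\phi)\le\alpha$ implies $\phi\leqq D^{+}_{g(\alpha)}$ \emph{and} $\phi\leqq D^{-}_{g(\alpha)}$; both signs must travel together, since the successor clauses $D^{+}_{\beta+1}=C\cdot D^{-}_{\beta}$ and $D^{-}_{\beta+1}=C^{*}\cdot D^{+}_{\beta}$ interchange the two families. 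Taking $\alpha=\rho(\phi)$ then yields the statement.

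The first ingredient is the rank. Following Martinez--Ranero's structural analysis \cite{Carlos}, one calls a fragmented Aronszajn type \emph{basic} if it embeds into $C$ or into $C^{*}$ (this covers all countable types, since an Aronszajn line is non-scattered and hence contains $\QQ$, into which every countable order embeds), sets $\rho=0$ on basic types, and puts $\rho(\phi)\le\alpha$ whenever $\phi=\sum_{x\in I}\psi_{x}$ with $I\in\{\omega,\omega^{*},\QQ,C,C^{*}\}$ and $\rho(\psi_{x})<\alpha$ for all $x$. The real content here --- the Aronszajn analogue of Hausdorff's theorem, carried out in \cite{Carlos} and using $\text{MA}_{\aleph_{1}}$ through the ``no Souslin suborder'' facts behind Fact~2.5 --- is that every fragmented Aronszajn type has a well-defined rank, and that $\rho(\phi)<\omega_{2}$ whenever $|\phi|=\aleph_{1}$: the iterated condensation collapsing maximal intervals of rank $<\beta$ must stabilise in fewer than $\aleph_{1}^{+}=\omega_{2}$ steps, and a nontrivial fixed point of the process would contradict fragmentation.

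The induction then rests on one elementary observation about the $D$'s: a ``zig-zag'' of inclusions $D^{+}_{\mu}\leqq D^{-}_{\mu+1}\leqq D^{+}_{\mu+2}$ together with its mirror image. Indeed $D^{-}_{\mu+1}=C^{*}\cdot D^{+}_{\mu}$ contains $D^{+}_{\mu}$ via the block order obtained by selecting one point from each $C^{*}$-block, and symmetrically; combined with the fact that a $C$-shuffle (resp.\ $C^{*}$-shuffle) contains a copy of every shuffled type, this shows the $D^{+}_{\alpha}$ and the $D^{-}_{\alpha}$ are each $\leqq$-increasing and mutually cofinal. For the base case, $C$ and $C^{*}$ each embed into $D^{+}_{1}=C\cdot C^{*}$ ($C$ as a block, $C^{*}$ as the block order) and likewise into $D^{-}_{1}$, so $g(0)=1$ works. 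For the inductive step, write $\phi=\sum_{x\in I}\psi_{x}$ with $\rho(\psi_{x})<\alpha$; by the induction hypothesis and monotonicity each $\psi_{x}$ lies below $D^{+}_{\gamma}$ and $D^{-}_{\gamma}$ for $\gamma:=\sup_{\beta<\alpha}g(\beta)$. When $I\leqq C$ (the cases $I\in\{\omega,\omega^{*},\QQ,C\}$) one gets $\phi\leqq C\cdot D^{+}_{\gamma}$, and the zig-zag together with monotonicity of $D\mapsto C\cdot D$ pushes this into $C\cdot D^{-}_{\gamma+1}=D^{+}_{\gamma+2}$; the case $I=C^{*}$ is symmetric, landing in $D^{-}_{\gamma+2}$. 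Bumping once more along the zig-zag gives $\phi\leqq D^{+}_{\gamma+3}$ and $\phi\leqq D^{-}_{\gamma+3}$, so $g(\alpha)=\gamma+\omega$ works; this $g$ is non-decreasing and stays below $\omega_{2}$. (Alternatively one may route the step through universality of shuffles, Fact~3.1: the $C$-sum $\sum_{x\in C}D^{+}_{\gamma}$ is dominated by $\{D^{+}_{\beta}:\beta<\lambda\}$ and so embeds into the limit shuffle $D^{+}_{\lambda}$, with Fact~2.6, $L^{2}\equiv L$, collapsing the iterated products that appear.)

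I expect the main obstacle to lie entirely in the first ingredient: proving that \emph{every} fragmented Aronszajn line decomposes as above with rank below $\omega_{2}$. This is the Hausdorff-rank theory for Aronszajn lines of \cite{Carlos}, and without it there is nothing to induct on; the remainder is bookkeeping. The one genuine subtlety in the induction is the $C$-versus-$C^{*}$ parity --- a $C^{*}$-indexed sum does not reduce to a $C$-indexed one, the two being incompatible --- which is precisely why the construction alternates the two factors, and why the induction must retain both $D^{+}_{\alpha}$ and $D^{-}_{\alpha}$ and settle for the reindexing $g(\alpha)$ in place of $\alpha$ itself.
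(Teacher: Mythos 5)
The paper offers no proof of this statement: it is quoted as a black-box Fact, with the surrounding text attributing both the construction of the lines $D_\alpha^{\pm}$ and the accompanying rank theory to Martinez-Ranero \cite{Carlos}. So there is no in-paper argument to compare against; your proposal is a reconstruction of the argument of \cite{Carlos}, and its skeleton is the right one --- a Hausdorff-style rank on fragmented types, an induction that carries $D_\alpha^{+}$ and $D_\alpha^{-}$ together because the successor clauses interchange the two families, the zig-zag $D_\mu^{+}\leqq D_{\mu+1}^{-}\leqq D_{\mu+2}^{+}$, and a reindexing $g$. You are also right that essentially all of the content sits in the first ingredient (every fragmented Aronszajn type decomposes with rank below $\omega_2$), which you defer back to \cite{Carlos}; as written the proposal is therefore a reduction of the Fact to the main structural theorem of that paper rather than an independent proof, but that matches how the present paper uses it.

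One step of your bookkeeping does not survive the paper's conventions. Since $\phi\times\psi$ is the \emph{antilexicographic} product, $D^{+}_{\gamma+2}=C\times D^{-}_{\gamma+1}$ means ``$D^{-}_{\gamma+1}$-many copies of $C$'': the Countryman factor is the fine structure, sitting inside each point of $D^{-}_{\gamma+1}$. But the order you must absorb at a successor step, $\sum_{x\in I}\psi_x$ with $I\leqq C$ and $\psi_x\leqq D^{+}_{\gamma}$, is ``$C$-many copies of $D^{+}_{\gamma}$'', with $C$ as the \emph{coarse} structure; this is $D^{+}_{\gamma}\times C$, not $C\times D^{+}_{\gamma}$. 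These have opposite shapes, and the product identities plus monotonicity do not by themselves produce the required embedding: each successor stage adds Countryman fine structure inside an existing $D$, whereas here you need a new Countryman block order outside one. The parenthetical route you offer is the one that actually closes the induction: $\sum_{x\in C}D^{+}_{\gamma}$ is a $C$-sum dominated by $\{D^{+}_{\beta}:\beta<\lambda\}$ for any limit $\lambda>\gamma$, hence embeds into the limit shuffle $D^{+}_{\lambda}$ by universality (Fact 3.1), after which the zig-zag recovers the other sign; your allowance $g(\alpha)=\gamma+\omega$ is exactly what this costs. With the successor step routed through shuffle universality rather than through the product identities, the argument is correct.
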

\begin{fact}
	Every fragmented Aronszajn line $\phi$ of rank $\alpha $ embeds into either $D_\alpha^+$ or $D_\alpha^- $
\end{fact}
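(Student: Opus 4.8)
The plan is to argue by induction on the rank $\alpha$ of $\phi$, running the induction in lock-step with the recursive construction of $D_\alpha^{+}$ and $D_\alpha^{-}$ and transporting Laver's argument for $\sigma$-scattered orders, with $C$- and $C^{*}$-shuffles playing the role of $\QQ$-shuffles and $\kappa$-sums. Since the rank of a fragmented Aronszajn line is defined recursively in the same shape as the $D_\alpha^{\pm}$ --- a line of rank $0$ is a suborder of $C$ or of $C^{*}$; a line of rank $\le\beta+1$ is, up to biembeddability, a $C$-, $C^{*}$-, $\QQ$-, $\omega$- or $\omega^{*}$-sum of lines of rank $\le\beta$; and a line of rank $\le\alpha$ with $\alpha$ a limit is such a sum of lines of rank $<\alpha$ --- it is natural to carry the orientation through the induction, proving the sharper statement that a line whose rank witness is positively oriented embeds into $D_\alpha^{+}$ and one negatively oriented into $D_\alpha^{-}$, with sums over the self-dual index sets $\QQ,\omega,\omega^{*}$ assignable to either side. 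Before the induction I would record a monotonicity lemma for the hierarchy: $D_\beta^{+}\leqq D_\gamma^{+}$ and $D_\beta^{-}\leqq D_\gamma^{-}$ for $\beta\le\gamma$, and $D_\beta^{\pm}\leqq D_\gamma^{+}$, $D_\beta^{\pm}\leqq D_\gamma^{-}$ for $\beta<\gamma$. This follows from a single simultaneous induction on $\gamma$, using only that every summand of a shuffle embeds into the shuffle, that $A\times B\leqq A^{\prime}\times B^{\prime}$ whenever $A\leqq A^{\prime}$ and $B\leqq B^{\prime}$, and --- at limit stages --- the universality of shuffles stated above.

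The base case $\alpha=0$ is immediate, since $C=D_0^{+}$ and $C^{*}=D_0^{-}$. For the successor case $\alpha=\beta+1$, write $\phi\equiv\sum_{x\in L}\phi_x$ with $L$ one of $\QQ,C,C^{*},\omega,\omega^{*}$ and each $\phi_x$ of rank $\le\beta$; by the induction hypothesis every $\phi_x$ embeds into $D_\beta^{+}$ or into $D_\beta^{-}$, and by the orientation-aware definition of rank the two sub-cases to treat are that all the $\phi_x$ embed into $D_\beta^{-}$ (the positive case) or all into $D_\beta^{+}$ (the negative case). In the positive case, if $L=C$ then already $\phi\leqq\sum_{x\in C}D_\beta^{-}=C\times D_\beta^{-}=D_{\beta+1}^{+}$; if instead $L\in\{\QQ,\omega,\omega^{*}\}$, one first uses the stated fact that each of $\QQ,C,C^{*}$ --- and hence, as suborders, $\omega$ and $\omega^{*}$ --- is realised as the block order on a family of intervals of $C$, which lets one re-index the sum over a copy of $L$ sitting inside $C$ and pad the remaining points of $C$ with singletons, again landing inside $C\times D_\beta^{-}=D_{\beta+1}^{+}$. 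The negative case is symmetric, with $C^{*}\times D_\beta^{+}=D_{\beta+1}^{-}$.

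For the limit case, again write $\phi\equiv\sum_{x\in L}\phi_x$ with each $\phi_x$ of rank $\gamma_x<\alpha$; the induction hypothesis gives $\phi_x\leqq D_{\gamma_x}^{+}$ or $\phi_x\leqq D_{\gamma_x}^{-}$, and since $\alpha$ is a limit we have $\gamma_x+1<\alpha$, so the monotonicity lemma upgrades both possibilities to $\phi_x\leqq D_{\gamma_x+1}^{+}$. Hence the family $\{\phi_x:x\in L\}$ is dominated by $\{D_\beta^{+}:\beta<\alpha\}$. Now $D_\alpha^{+}$ is by construction a $C$-shuffle of precisely the family $\{D_\beta^{+}:\beta<\alpha\}$, so by the universality of shuffles it dominates every $C$-shuffle of a family dominated by $\{D_\beta^{+}:\beta<\alpha\}$ --- and, via the block-order fact once more, every $\QQ$-, $\omega$- or $\omega^{*}$-sum of such a family as well. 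Thus $\phi\leqq D_\alpha^{+}$ in the positive case, and symmetrically $\phi\leqq D_\alpha^{-}$, using the $C^{*}$-shuffle $D_\alpha^{-}$ of $\{D_\beta^{-}:\beta<\alpha\}$, in the negative case.

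I expect the crux to be the orientation bookkeeping rather than any single estimate. The pieces $\phi_x$ of a rank-$\alpha$ decomposition need not all embed into the \emph{same} $D_\beta^{\pm}$, and a careless reassembly would place $\phi$ inside $D_{\alpha+1}^{\pm}$ rather than $D_\alpha^{\pm}$, destroying the rank-exactness the statement asserts. The resolution is that the universality fact is tailored exactly so that the $C$-shuffle $D_\alpha^{+}$ already absorbs everything dominated by $\{D_\beta^{+}:\beta<\alpha\}$, while the monotonicity lemma shows that ``dominated by the $D_\beta^{\pm}$'' collapses to ``dominated by the $D_\beta^{+}$'' once one is permitted a single bounded increase $\beta\mapsto\beta+1$ --- which is free below a limit and is exactly what the signed recursion for rank builds in at successors. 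One should also check that these reductions invoke only $\text{MA}_{\aleph_1}$, through the block-order fact, and PFA, through ``every Aronszajn line is universal or fragmented'', so that the result stands under the paper's standing hypotheses.
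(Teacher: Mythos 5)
This statement is a \emph{Fact} in the paper's sense: it is imported from Martinez--Ranero \cite{Carlos} and the paper supplies no proof of it, so there is no in-paper argument to compare yours against line by line. The closest thing is the unnumbered inline induction just after the definition of the hierarchy $\mathcal{C}_\alpha$, which proves the analogous statement ``every Aronszajn $\phi\in\mathcal{C}_\alpha$ embeds into $D_\alpha^+$ or $D_\alpha^-$,'' and that argument exposes the main gap in your proposal. At a successor (or indeed any) stage you declare, ``by the orientation-aware definition of rank the two sub-cases to treat are that all the $\phi_x$ embed into $D_\beta^-$ \ldots or all into $D_\beta^+$.'' No such orientation tag is built into the rank of a fragmented Aronszajn line; a rank-$(\beta+1)$ line can perfectly well be a $\QQ$- or $C$-sum mixing pieces that embed only into $D_\beta^+$ with pieces that embed only into $D_\beta^-$, and since $D_\beta^+$ and $D_\beta^-$ are incomparable you cannot silently collapse the mixture to one side. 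This is exactly the hardest point, and your own ``crux'' paragraph papers over it by attributing the uniformity to the definition of rank rather than proving it. The paper's inline argument resolves it with a cofinality trick you do not have: for $L\in\{C,C^*,\QQ\}$, at least one of the two orientation classes $\{\psi_x\equiv D_\beta^+\}$, $\{\psi_x\equiv D_\beta^-\}$ is cofinal in every interval of $L$, and since shuffles are determined up to biembeddability by any cofinal subfamily (Lemma 3.2 / Fact 3.1), one may pass to a shuffle of a single orientation. You need that step, or something equivalent, for the proof to go through.

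Two smaller points. First, your opening reduction --- that a rank-$(\beta+1)$ line is, up to biembeddability, an $L$-sum of rank-$\le\beta$ lines for $L\in\{\QQ,C,C^*,\omega,\omega^*\}$ --- is itself the substantive structural content of Martinez--Ranero's analysis (the rank there is defined via an equivalence-relation derivative, and extracting this decomposition uses the basis theorem under PFA); assuming it makes the argument close to circular relative to the Fact being proved. Second, your padding device ``realise $L$ as a block order on intervals of $C$'' fails for $L=C^*$: if $C$ had $C^*$-many disjoint nonempty intervals then $C^*\leqq C$, which is false. So the orientation at a stage must be read off from the index set ($C$ versus $C^*$) and not only from the pieces; your limit case implicitly does this but the successor case does not say how a $C^*$-indexed sum of positively-oriented pieces is to be routed into $D_\alpha^-$ rather than $D_\alpha^+$. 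The monotonicity lemma you state ($D_\beta^{\pm}\leqq D_\gamma^{\pm}$ and cross-inequalities for $\beta<\gamma$) is correct and genuinely needed, and your limit case via universality of shuffles is sound once the pieces have been uniformised; it is the uniformisation itself that is missing.
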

A useful property of shuffles and unbounded sums is that they preserve algebraic indecomposability. 
\begin{lemma}
	For $L \in \{C,C^*,\QQ \}$, if $\phi$ is an $L$ shuffle of algebraically indecomposable elements, then $\phi$ is algebraically indecomposable. 
\end{lemma}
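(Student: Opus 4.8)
The plan is to reduce algebraic indecomposability of $\phi=\sum_{z\in L}\phi_z$ to the analysis of a single cut and then feed that cut back into the $L$-shuffle structure. First I would record the standard reformulation: a type $\phi$ is AI if and only if, for every linear order $M$ of type $\phi$ and every splitting $M=M^-\cup M^+$ of $M$ into an initial segment $M^-$ and a final segment $M^+$, one has $\phi\leqq\text{tp}(M^-)$ or $\phi\leqq\text{tp}(M^+)$. The forward direction is automatic, since $M=M^-+M^+$ realises $\phi\equiv\text{tp}(M^-)+\text{tp}(M^+)$; for the converse, an equivalence $\phi\equiv\psi_1+\psi_2$ yields an embedding $M\hookrightarrow N_1+N_2$ with $\text{tp}(N_i)=\psi_i$, and pulling the obvious cut of $N_1+N_2$ back along this embedding presents $M=M^-\cup M^+$ with $\text{tp}(M^-)\leqq\psi_1$ and $\text{tp}(M^+)\leqq\psi_2$. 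So it suffices to fix a realisation $M=\sum_{z\in L}M_z$ of $\phi$ (antilexicographically ordered, $\text{tp}(M_z)=\phi_z$) and an arbitrary cut $M=M^-\cup M^+$ with both sides non-empty, and to embed $\phi$ into one of $\text{tp}(M^-),\text{tp}(M^+)$.

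Next I would unwind what the cut does to the fibres. Because the order is antilexicographic, the set $I^-=\{z\in L:M_z\cap M^-\neq\emptyset\}$ is an initial segment of $L$, the set $I^+=\{z\in L:M_z\cap M^+\neq\emptyset\}$ is a final segment, $I^-\cup I^+=L$, and $I^-\cap I^+$ has at most one element $z_0$, whose fibre $M_{z_0}$ is split into $M_{z_0}^-+M_{z_0}^+$. This is where the hypothesis on the summands enters: since $\phi_{z_0}=\text{tp}(M_{z_0}^-)+\text{tp}(M_{z_0}^+)$ and $\phi_{z_0}$ is AI, either $\phi_{z_0}\leqq\text{tp}(M_{z_0}^-)$ or $\phi_{z_0}\leqq\text{tp}(M_{z_0}^+)$; in the first case a full copy of $M_{z_0}$ already sits inside $M^-$, so $\text{tp}(M^-)\geqq\sum_{z\in I^-}\phi_z$, and in the second case $\text{tp}(M^+)\geqq\sum_{z\in I^+}\phi_z$ (if there is no split index, both inequalities hold outright). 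Since $L$ is $\QQ$ or has size $\aleph_1$ while $I^-\cup I^+=L$, at least one of $I^-,I^+$ is \emph{large}: uncountable when $L\in\{C,C^*\}$, and containing a non-empty open interval of $L$ when $L=\QQ$. Hence one of $\text{tp}(M^-),\text{tp}(M^+)$ dominates $\sum_{z\in I}\phi_z$ for some large initial (or final) segment $I$ of $L$, and the proof reduces to the claim that for such an $I$ one has $\sum_{z\in I}\phi_z\geqq\phi$.

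To prove the claim I would first observe that a large convex subset $I$ of $L$ is biembeddable with $L$ and, being a suborder of $L$, contains no Souslin suborder; therefore the argument behind Fact 2.4 (which, by the remark following its statement, uses only these two features of $L$) applies to $I$ and produces pairwise disjoint intervals $\{I_j:j\in\widetilde I\}$ of $I$, each containing a non-empty open interval of $L$, whose block order is isomorphic to $L$. For each such $I_j$, the shuffle hypothesis on $\phi$ forces $\{\phi_z:z\in I_j\}$ to dominate the entire family $\{\phi_z:z\in L\}$, so the block $B_j:=\sum_{z\in I_j}M_z$ contains an isomorphic copy of $M_{\phi_z}$ for every $z\in L$. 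Now re-index $\widetilde I\cong L$, write the blocks as $B_z$ for $z\in L$, choose for each $z$ an embedding $M_z\hookrightarrow B_z$, and assemble these into an embedding $\sum_{z\in L}M_z\hookrightarrow\sum_{z\in L}B_z$; since the $B_z$ sit as convex, pairwise disjoint, block-ordered pieces of $\sum_{z\in I}M_z$, we get $\sum_{z\in L}B_z\hookrightarrow\sum_{z\in I}M_z$, whence $\phi\leqq\sum_{z\in I}\phi_z$. (This is the same mechanism that underlies the universality Fact above.) Combined with the previous paragraph, one of $\text{tp}(M^-),\text{tp}(M^+)$ is $\geqq\phi$, so $\phi$ is AI.

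I expect the genuine difficulty to be concentrated in the claim of the third paragraph, namely understanding the $L$-shuffle after it has been restricted to a proper cofinal (or coinitial) part of $L$: an uncountable initial segment of $C$ need not literally be a copy of $C$, nor is it obvious that it contains the ``fat'' intervals on which the shuffle condition can be replayed, and it is exactly Fact 2.4 together with the absence of Souslin suborders (a consequence of $\text{MA}_{\aleph_1}$) that repairs this — which is why the argument lives under PFA. A smaller point needing care is the degenerate placement of the split index $z_0$, when $z_0$ is an endpoint of $L$ and one of $I^-,I^+$ is a singleton: there one either uses the algebraic indecomposability of $\phi_{z_0}$ on the side whose segment is all of $L$, or else applies the third-paragraph claim to the cofinal segment $L\setminus\{z_0\}$.
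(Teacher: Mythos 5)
Your argument is correct and is essentially the paper's own proof written out in full: both reduce to a single cut of a realisation $\sum_{z\in L}M_z$, observe that one side contains $\bigcup_{z\in I}M_z$ for a convex $I$ biembeddable with $L$, and use the cofinality/universality property of shuffles to conclude that this restricted sum is still $\equiv\phi$. The only cosmetic slip is the ``hence'' in your second paragraph --- the side into which $\phi_{z_0}$ survives need not be the large side --- but, as you yourself note, this is repaired by working with $\{z<z_0\}$ or $\{z>z_0\}$ directly, which also shows that the algebraic indecomposability of the summands is never actually needed (the paper's proof does not use it either).
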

\begin{proof}
	Suppose $\bigcup\limits_{x\in L} M_x$ is an $L$ shuffle with type $\phi$. Suppose $\bigcup\limits_{x\in L} M_x\subseteq A\cup B $ where $A<B$. We may suppose both $A$ and $B$ are nonempty. Then, there is a $z\in L$ for which $\bigcup\limits_{x<z} M_z \subseteq A$. However, as $\phi$ is a shuffle, $\text{tp}(\bigcup\limits_{x<z} M_z) \equiv \phi$.        
\end{proof}
Note however, they do not preserve Ramsey degrees. For example, consider the unbounded sum $\sum\limits_{n\in \omega}\phi_i$ such that $\phi_i \equiv C$ for $i$ even else $\phi_i\equiv C^*$. One can check that this Aronszajn line has Ramsey degree $2$. This can be mitigated if the shuffle is strict. 
\begin{definition}
	We call an $L$-shuffle $\sum\limits_{x\in L} \phi_x $ strict $\iff$ $\{\phi_x: x\in L\}$ forms a $\leqq$-chain.  
\end{definition}
\begin{lemma}
	Suppose $\phi = \sum\limits_{x\in L} \psi_x$ is a strict $L$ shuffle. Then $\phi \equiv \sum\limits_{x\in L} \varphi_x $ where $\{\varphi_x:x\in L \}$ is well ordered under $\leqq$ and has order type $1$ or $|L| $    
\end{lemma}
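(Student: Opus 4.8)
The plan is to replace the family of summands, which by strictness is a $\leqq$-chain, by a well ordered cofinal subfamily reindexed over $L$, and then to invoke the fact recorded after Lemma~3.2 that two $L$-shuffles are $\equiv$ as soon as their families of summands are mutually cofinal under $\leqq$. So first pass to $\equiv$-classes: by strictness $\mathcal{K}:=\{\psi_x:x\in L\}$ is a $\leqq$-chain, hence, modulo $\equiv$, a linear order of size $\leq|L|\leq\aleph_1$.

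\emph{The case $\mathcal{K}$ has a greatest element.} If some $\psi^{*}$ is $\leqq$-largest in $\mathcal{K}$ (up to $\equiv$), then $\{\psi^{*}\}$ is mutually cofinal with $\{\psi_x:x\in L\}$, so the cofinality-equivalence gives $\phi\equiv\sum_{x\in L}\psi^{*}=\psi^{*}\cdot\text{tp}(L)$; this is (vacuously) a strict $L$-shuffle whose family of summands, $\{\psi^{*}\}$, is well ordered under $\leqq$ of order type $1$.

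\emph{The case $\mathcal{K}$ has no greatest element.} Then the cofinality of $\mathcal{K}$ is an infinite regular cardinal $\kappa\leq\aleph_1$, so $\kappa\in\{\aleph_0,\aleph_1\}$; fix a strictly $\leqq$-increasing (mod $\equiv$) cofinal sequence $\langle\chi_\alpha:\alpha<\kappa\rangle$ in $\mathcal{K}$. The crucial step is to reindex it over $L$: I claim there is a surjection $h:L\to\kappa$ with $h^{-1}\big([\gamma,\kappa)\big)$ meeting every nonempty open interval of $L$ for each $\gamma<\kappa$. Granting this, put $\varphi_x:=\chi_{h(x)}$; density of the tails $h^{-1}([\gamma,\kappa))$ together with the chain condition on the $\chi_\alpha$ shows $\sum_{x\in L}\varphi_x$ is again a strict $L$-shuffle, while $\{\varphi_x:x\in L\}=\{\chi_\alpha:\alpha<\kappa\}$ is well ordered under $\leqq$ of order type $\kappa$, and since this family is mutually cofinal with $\{\psi_x:x\in L\}$ the cofinality-equivalence yields $\phi\equiv\sum_{x\in L}\varphi_x$. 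The existence of $h$ is where Fact~2.5 enters: for $L=\QQ$ (so $\kappa=\aleph_0$) one partitions $\QQ$ into countably many dense pieces; for $L\in\{C,C^{*}\}$ with $\kappa=\aleph_1$ one iterates the block decomposition of $C$ into an interval family of type $C$ from Fact~2.5, reading $h$ off the resulting tree of intervals so that every tail stays dense (equivalently, one uses that every nonempty open interval of $C$ is uncountable, so the countable set $h^{-1}([0,\gamma))$, $\gamma<\omega_1$, contains no interval).

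It remains to match $\kappa$ with $|L|$, and this is the step I expect to be the main obstacle. For $L=\QQ$ it is immediate ($\kappa=\aleph_0=|\QQ|$, i.e.\ order type $\omega$). For $L\in\{C,C^{*}\}$ one must rule out $\kappa=\aleph_0$, i.e.\ show that a strict $C$- or $C^{*}$-shuffle whose chain of summands has no $\leqq$-greatest element cannot have that chain of countable cofinality. Here I would use Fact~3.1 to identify $\phi$ with the \emph{universal} $C$-sum over the $\omega$-chain $\{\chi_n:n<\omega\}$ and then argue by the rank function on fragmented Aronszajn lines from \S3.1: such a universal sum has rank the supremum of the ranks of the $\chi_n$, and monotonicity of rank along $\mathcal{K}$ forces an upper bound of $\mathcal{K}$ to realize that rank and lie in $\mathcal{K}$, contradicting no-maximality; alternatively, the self-similarity of $C$ ($C\cdot C\equiv C$ together with Fact~2.5) shows this universal sum is $\equiv$ to its own product with $\text{tp}(C)$, collapsing the situation back to the "order type $1$" case already handled. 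Either route, together with the bookkeeping of the two previous paragraphs, gives the lemma.
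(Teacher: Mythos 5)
Your overall strategy matches the paper's: extract a $\leqq$-well-ordered cofinal subfamily $\{\chi_\alpha:\alpha<\kappa\}$ from the chain of summands, reindex it over $L$ using the block decomposition of Fact 2.5 so that tails stay dense, and conclude by mutual cofinality of the two families of summands. Your greatest-element case and your $\kappa=|L|$ case are both sound; the paper's Case 1 does the reindexing with an arbitrary bijection (using regularity of $|L|$ and the fact that every interval of $L$ has size $|L|$) rather than your explicit surjection $h$ with dense tails, but this is the same idea. One presentational caveat: the ``shuffles are equivalent up to cofinality'' remark you invoke as a black box is stated in the paper as something whose proof \emph{is} this lemma's proof, so strictly you should supply the two embeddings yourself; your construction of $h$ together with Fact 2.5 essentially does this, so the circularity is only apparent.

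The genuine problem is in your last paragraph. The case $L\in\{C,C^*\}$ with $\mathcal{K}$ having no greatest element and countable cofinality cannot be ruled out: by Fact 3.2, $D_\omega^+$ is itself a $C$-shuffle of the strictly increasing chain $\{D_n^+:n<\omega\}$, which has cofinality $\omega$ and no maximum. Your rank argument is also a non sequitur --- the supremum of the ranks of the $\chi_n$ need not be attained by any member of $\mathcal{K}$, so no upper bound of $\mathcal{K}$ is forced to lie in $\mathcal{K}$. What saves you is that your ``alternative'' route is precisely the paper's Case 2 ($\kappa<|L|$): set $\varphi=\sum_{\alpha<\kappa}\chi_\alpha$, plant a copy of $\varphi$ inside each block of an interval partition of $L$ of block type $L$ to get $\text{tp}(L)\cdot\varphi\leqq\phi$, and use $\psi_x\leqq\varphi$ for every $x$ for the reverse inequality; this lands in the order-type-$1$ alternative of the conclusion rather than in a contradiction. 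So the lemma is proved by your bookkeeping plus that second route, and the first route should simply be deleted.
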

\begin{proof}
	We can take $\{\varphi_y: y\in \kappa \} \subseteq \{\psi_x:x\in L \} $ cofinal with $\alpha <\beta \Rightarrow \varphi_\alpha<\varphi_\beta$. There are two cases to consider.\\
	\\
	\textbf{Case 1:} $\kappa = |L|$. Consider the shuffle $\sum\limits_{y\in L} \varphi_y$ where each member in the cofinal sequence appears exactly once. It is clear this is a shuffle as every interval has size $|L|$ and hence $\{\varphi_y: y\in I \} $ is cofinal for any interval $I$. Consider $\mathcal{I}$ a collection of disjoint intervals in $L$ isomorphic to $L$ under the block sequence order. Let $f$ be such an isomorphism. It is clear now by the cofinality that $ \sum\limits_{x\in L} \psi_x \leqq \sum\limits_{y\in L} \varphi_y$. For each $x\in L$, we take $z\in f(x)$ such that $\psi_x \leqq \varphi_z $. Doing the same cofinality trick, we can reverse the argument to get $\sum\limits_{y\in L} \varphi_y \equiv \sum\limits_{x\in L} \psi_x $.\\
	\\
	\textbf{Case 2:} $\kappa< |L|$. Consequently, $\kappa$ can embed into $L$. Consider now $\varphi = \sum\limits_{\alpha<\kappa}\varphi_\alpha  $. Take an interval partition $\mathcal{I}$ of $L$ isomorphic to $L$. Within each interval, take $\kappa$ increasing new disjoint intervals and for the $\alpha$ interval, find a type $\psi_\alpha \geqq \varphi_\alpha $. So, for each $I\in \mathcal{I}$, we found a type of the form $\sum\limits_{\alpha<\kappa}\varphi_\alpha $. Note $\sum\limits_{\alpha<\kappa} \psi_\alpha \geqq \sum\limits_{\alpha<\kappa}\varphi_\alpha$. However, by cofinality of $\varphi_\alpha$, $\sum\limits_{\alpha<\kappa} \psi_\alpha \equiv \sum\limits_{\alpha<\kappa}\varphi_\alpha$. We have thus shown that $\text{tp}(L)\times \varphi \leqq  \sum\limits_{x\in L} \psi_x$. However, for all $x\in L$, $\psi_x \leqq \varphi$, so  $\text{tp}(L)\times \varphi \equiv  \sum\limits_{x\in L} \psi_x$. 
\end{proof}
\begin{lemma}
	Suppose $\phi= \sum\limits_{x\in L} \psi_x$ is a strict $L$ shuffle with $L\in \{C,C^*,\QQ\}$ and there is an $n\in \omega$ such that $\forall x\in L $, $\psi_x \rightarrow (\psi_x)_{k,n}^1$, then $\phi \rightarrow (\phi)_{k,n}^1 $      
\end{lemma}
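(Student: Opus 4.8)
The plan is to reduce, via Lemma~3.2, to a shuffle indexed by a $\leqq$-well ordered family and then to run a pull-back argument on the index order $L$, using that each of $C$, $C^{*}$ and $\QQ$ is \emph{indivisible} (has big Ramsey degree bounded by $1$). Concretely, Lemma~3.2 together with the fact that biembeddable orders have the same big Ramsey degree lets me reduce to the case $\phi\equiv\sum_{x\in L}\varphi_{x}$ with $\{\varphi_{x}:x\in L\}$ well ordered under $\leqq$ of order type $1$ or $|L|$; since each $\varphi_{x}$ is biembeddable with some $\psi_{x}$ we still have $\varphi_{x}\to(\varphi_{x})^{1}_{k,n}$. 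Using Lemma~3.2 again (a shuffle is determined up to biembeddability by the cofinal structure of its summand family), I fix a ``spread out'' representative $\phi=\bigcup_{x\in L}M_{x}$, $\text{tp}(M_{x})=\varphi_{x}$, in which $\{\varphi_{x}:x\in I\}$ is $\leqq$-cofinal in $\{\varphi_{x}:x\in L\}$ for every interval $I$ of $L$: when the order type is $1$ this is immediate from the definition of a shuffle, and when it is $|L|$ one uses that $|L|$ is regular (for $L=\QQ$ the chain then has type $\omega$; for $L=C,C^{*}$ one also passes to a dense copy of $L$ whose intervals all have size $|L|$, arranged by deleting jumps and countable intervals).

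The one genuinely external input is indivisibility. For $\QQ$ this is classical. For $C$ (and dually $C^{*}$): if $C=R\sqcup B$ then one of $R,B$ is an uncountable suborder of $C$, hence an Aronszajn line, hence contains a copy of $C$ or of $C^{*}$ by minimality of $C$; as $C^{*}$ does not embed into $C$, it must contain $C$. Applied to the finitely many colour classes of a finite colouring, this gives $L\to(L)^{1}_{m,1}$ for every $m$.

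For the core argument, given $c:\phi\to k$, for each $x\in L$ apply $\varphi_{x}\to(\varphi_{x})^{1}_{k,n}$ to $M_{x}$ to get $M'_{x}\subseteq M_{x}$ with $\text{tp}(M'_{x})=\varphi_{x}$ and $A_{x}:=c[M'_{x}]\in[k]^{\le n}$. Then $x\mapsto A_{x}$ is a colouring of $L$ by the finite set $[k]^{\le n}$, so by indivisibility there is $L'\subseteq L$ with $\text{tp}(L')=\text{tp}(L)$ --- and, when $L\in\{C,C^{*}\}$, chosen inside the monochromatic copy so that all intervals of $L'$ have size $|L|$ --- on which $A_{x}$ is constantly some $A$ with $|A|\le n$. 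Then $\bigcup_{x\in L'}M'_{x}$ is $c$-coloured using only colours from $A$, and has order type $\sum_{x\in L'}\varphi_{x}$.

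It remains to show $\sum_{x\in L'}\varphi_{x}\equiv\phi$, and this is where I expect the real work; the inequality $\sum_{x\in L'}\varphi_{x}\leqq\phi$ is trivial. If the order type is $1$ then $\phi\equiv\varphi\cdot\text{tp}(L)$ is a product and $\varphi\cdot\text{tp}(L')=\varphi\cdot\text{tp}(L)$, so one only needs $\varphi\to(\varphi)^{1}_{k,n}$: true when $\varphi$ is a single summand, and when $\varphi=\sum_{\alpha<\omega}\varphi_{\alpha}$ is an $\omega$-sum of the increasing chain it follows by the same pull-back argument over $\omega$ (an infinite subset of $\omega$ is cofinal, and since the chain increases one embeds $\varphi_{i}$ into $\varphi_{s_{i}}$ blockwise). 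If the order type is $|L|$, then in the spread-out representation $\{\varphi_{x}:x\in L'\}$ is a full-size, hence by regularity $\leqq$-cofinal, subfamily, and every interval of $L'$ still meets $|L|$-many summand classes, so $\sum_{x\in L'}\varphi_{x}$ is again a genuine $L$-shuffle of that family; by the universality of $L$-shuffles it dominates $\phi$, giving $\sum_{x\in L'}\varphi_{x}\equiv\phi$. The main obstacle is exactly this last step --- propagating the shuffle/universality structure to the colour-homogeneous sub-order --- and the well-orderedness extracted in Lemma~3.2, which leaves only a product or a genuine shuffle of regular length, together with the regularity of $|L|$ and the density of $C$, is what makes it go through.
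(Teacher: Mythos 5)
Your proposal is correct and follows essentially the same route as the paper: reduce via Lemma~3.2 to a $\leqq$-well ordered summand family of order type $1$ or $|L|$, pull the colouring back to a finite colouring of $L$ by the sets of colours surviving on good sub-copies of the summands, invoke indivisibility of $L$ to homogenize, and check that the resulting sub-sum is still a shuffle equivalent to $\phi$. You are somewhat more explicit than the paper about the two points it leaves implicit (the indivisibility of $C$ via minimality, and why $\sum_{x\in L'}\varphi_x$ remains a shuffle of a cofinal family), but the argument is the same one.
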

\begin{proof}
	Let $\bigcup\limits_{x\in L} M_x$ be such that $\text{tp}(M_x)=\psi_x $ and is a $L$ shuffle. By the previous lemma, we may suppose $\{\psi_x:x\in L \} $ is well ordered under $\leqq$. Moreover, by the previous lemma, there are two cases to consider. Either $\{\psi_x :x\in L\} $ has order type $|L|$ or $1$. \\
	\\
	\textbf{Case 1:} $ \phi \equiv \text{tp}(L)\times \varphi$ where $\varphi$ is a $|L|$ unbounded sum of types from $\{\psi_x: x\in L\}$ i.e the cofinality $1$ case. It is clear that $\varphi \rightarrow (\varphi)_{k,n}^1 $. Then $\phi\rightarrow (\phi)_{k,n}^1  $.\\
	\\
	\textbf{Case 2:} $ \forall x,y\in L$ $\psi_x<\psi_y$ or $\psi_x>\psi_y$. Let $c:  \bigcup\limits_{x\in L} M_x \rightarrow k$ with $k\geq n $. For each $x\in L$, there is an $A\subseteq [n]^k $ such that $\exists M_x^\prime \subseteq M_x \cap c^{-1}(A)$. Consider $c:L\rightarrow [n]^k$ to map each $x$ to some $A\in [n]^k$ such that we can find an $M_x^\prime $ as above. Since $L$ is indecomposable, we can find $L^\prime\subseteq L$ isomorphic to $L$ and such that $c\upharpoonright L^\prime = A $. For each $x\in L^\prime$, we find the requisite $M_x^\prime$. It is clear that $\bigcup\limits_{x\in L^\prime} M_x^\prime \subseteq c^{-1}(A)$. However, as $\psi_x$ is $\leqq$ well ordered of order type $|L|$ and each $\text{tp}(M_x^\prime)$ are distinct, $\bigcup\limits_{x\in L^\prime} M_x^\prime $ is an $L$ shuffle. Since $\text{tp}(M_x^\prime)$ is cofinal in $\{\psi_x:x\in L \}$, $\text{tp}(\bigcup\limits_{x\in L^\prime} M_x^\prime) \equiv \phi$ as desired. Hence, $\phi \rightarrow (\phi)_{k,n}^1$. 
\end{proof}
We now define our class of interest. We cannot isolate just Aronszajn lines with our method as we want our class to be hereditarily closed. Consequently, we will be interested in the class of Fragmented Aronszajn lines and countable orders. Note, every countable order can be embedded into $\QQ$. 
\begin{definition}
	We will define $\mathcal{C}$ to be the class of all Fragmented Aronszajn lines and countable orders. We also recursively construct $\mathcal{C}_\alpha$ for $\alpha <\omega_2$ as follows.
	\begin{itemize}
		\item $\mathcal{C}_0 = \{0,1\}$.
		\item $\phi \in \mathcal{C}_\beta \iff \phi = \sum\limits_{x\in L} \phi_x$, $\phi_x \in \bigcup\limits_{\alpha<\beta} \mathcal{C}_\alpha$ and $L\in \{C,C^*,\QQ\} $
	\end{itemize}
\end{definition}
It is clear that $D_\alpha^+, D_\alpha^- \in \mathcal{C}_\alpha$, It is also the case that for any Aronszajn $\phi \in \mathcal{C}_\alpha$, either $\phi \leqq D_\alpha^+$ or $\phi \leqq D_\alpha^-$. One can prove this by induction. Suppose it is true for all $\mathcal{C}_\beta$ for $\beta<\alpha $. Take $\phi \in \mathcal{C}_\alpha$. $\phi = \sum\limits_{x\in L} \phi_x $, $L\in \{C,C^*,\QQ\}$, where $\forall x\in L$ $\exists \beta<\alpha$ $\phi_x \leqq D_\alpha^+$ or $D_\beta^-$. So, $\phi \leqq  \sum\limits_{x\in L} \psi_x$ where $\psi_x\equiv D_\beta^+$ or $D_\beta^-$ for some $\beta <\alpha$. Note that one of $\{\psi_x :x\in L,\;\exists \beta<\alpha \;  \psi_x \equiv D_\beta^+ \} $ $\{\psi_x :x\in L,\; \exists \beta<\alpha \;  \psi_x \equiv D_\beta^- \} $ is cofinal in $\{\psi_x: x\in L \}$. Consequently, an $L$ shuffle of one of them can embed $\phi$ (as shuffles are universal and equivalent up to cofinality). If $L=C$ or $C^*$, we are done. If $L=\QQ$, then the rank of $\phi$ as an aronszajn line is $\text{sup} \{\beta: \exists x \in L \;  \psi_x \equiv D_\beta^+ \text{ or } D_\beta^- \}\leq \alpha$ and so $\phi \leqq D_\alpha^+$ or $D_\alpha^-$. 
\begin{lemma}
	$\mathcal{C} = \bigcup\limits_{\alpha <\omega_2} \mathcal{C}_\alpha$.
\end{lemma}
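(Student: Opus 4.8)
The plan is to prove the two inclusions $\mathcal{C}\supseteq\bigcup_{\alpha<\omega_2}\mathcal{C}_\alpha$ and $\mathcal{C}\subseteq\bigcup_{\alpha<\omega_2}\mathcal{C}_\alpha$ separately; the one genuinely new ingredient is that $\bigcup_{\alpha<\omega_2}\mathcal{C}_\alpha$ is closed under passing to suborders. Two bookkeeping remarks come first. By the ``padding'' trick --- writing a type $\phi$ as the $\QQ$-sum having one coordinate equal to $\phi$ and all others equal to $0$ --- one gets $\mathcal{C}_\alpha\subseteq\mathcal{C}_\beta$ whenever $\alpha\le\beta$, so the hierarchy is increasing. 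And since $\operatorname{cf}(\omega_2)=\aleph_2>\aleph_1$, any collection of at most $\aleph_1$ members of $\bigcup_\alpha\mathcal{C}_\alpha$ already lies inside a single $\mathcal{C}_\delta$ with $\delta<\omega_2$; this is what lets the recursion close up at the relevant cardinality.

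For $\bigcup_\alpha\mathcal{C}_\alpha\subseteq\mathcal{C}$ I would show, by induction on $\alpha$, that every $\phi\in\mathcal{C}_\alpha$ is either countable or a fragmented Aronszajn line. The base case is immediate. For the inductive step write $\phi=\sum_{x\in L}\phi_x$ with $L\in\{C,C^*,\QQ\}$ and each $\phi_x$ countable or fragmented Aronszajn by the inductive hypothesis. If $|\phi|\le\aleph_0$ we are done, so assume $|\phi|=\aleph_1$. That $\phi$ embeds neither $\omega_1$ nor $\omega_1^*$ is seen by projecting a hypothetical $\omega_1$-chain of $\phi$ down to $L$: since $L$ has no uncountable well-ordered (nor reverse-well-ordered) suborder, a weakly increasing $\omega_1$-indexed sequence in $L$ must be eventually constant, so a tail of the chain lives inside a single $M_x$, contradicting that $\phi_x$ does not embed $\omega_1$ (resp. $\omega_1^*$). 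And $\phi$ does not embed into $\RR$ because it contains an $\aleph_1$-sized Aronszajn suborder --- an uncountable $\phi_x$ if there is one, or otherwise (when every $\phi_x$ is countable and $L$ is uncountable) an uncountable suborder of $C$ or $C^*$. Hence $\phi$ is an Aronszajn line, so by the inductive observation recorded just before this lemma, $\phi\leqq D_\alpha^+$ or $\phi\leqq D_\alpha^-$. Since each $D_\alpha^{\pm}$ is a fragmented Aronszajn line and, by Moore's dichotomy (every Aronszajn line is universal or fragmented), every $\aleph_1$-sized suborder of a fragmented Aronszajn line is itself fragmented --- otherwise it, and hence $D_\alpha^{\pm}$, would be universal --- we conclude $\phi\in\mathcal{C}$.

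For $\mathcal{C}\subseteq\bigcup_\alpha\mathcal{C}_\alpha$ I first establish suborder-closure, again by induction on $\alpha$: if $\phi\in\mathcal{C}_\alpha$ is realized as $\bigcup_{x\in L}M_x\times\{x\}$ and $S$ is any suborder, then $\text{tp}(S)=\sum_{x\in L}\text{tp}\bigl(S\cap(M_x\times\{x\})\bigr)$, each summand is a suborder of $\phi_x$ and so lies in $\bigcup_\gamma\mathcal{C}_\gamma$ by the inductive hypothesis, and by the amalgamation remark above all of them lie in one $\mathcal{C}_\delta$ with $\delta<\omega_2$, whence $\text{tp}(S)\in\mathcal{C}_\delta$. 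Now let $\phi\in\mathcal{C}$. If $\phi$ is countable, realize it as a subset $S\subseteq\QQ$ and write $\phi=\sum_{x\in\QQ}\phi_x$ with $\phi_x=1$ for $x\in S$ and $\phi_x=0$ otherwise; since $0,1\in\mathcal{C}_0$ this gives $\phi\in\mathcal{C}_1$. If $\phi$ is a fragmented Aronszajn line, then by the construction of the $D_\alpha^+$'s there is $\alpha<\omega_2$ with $\phi\leqq D_\alpha^+\in\mathcal{C}_\alpha$, so suborder-closure yields $\phi\in\bigcup_\gamma\mathcal{C}_\gamma$.

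I expect the suborder-closure step, together with the verification inside the first inclusion that an $\aleph_1$-sized member of $\mathcal{C}_\alpha$ is not merely Aronszajn but fragmented, to be the crux; both hinge on tracking the construction level and on the universal/fragmented dichotomy. The most error-prone routine point is the $\omega_1$-projection argument, where one must genuinely use that a weakly increasing map $\omega_1\to L$ whose range is countable has an uncountable, hence cofinal, constant fibre and is therefore eventually constant.
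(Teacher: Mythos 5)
Your proof is correct and its skeleton matches the paper's: both arguments ultimately rest on the fact that every fragmented Aronszajn type embeds into some $D_\alpha^+\in\mathcal{C}_\alpha$, on $\operatorname{cf}(\omega_2)>\aleph_1$ to amalgamate $\aleph_1$ many levels into a single $\mathcal{C}_\delta$, and on some form of downward closure of $\bigcup_{\alpha<\omega_2}\mathcal{C}_\alpha$ under $\leqq$. The difference lies in how that downward closure is obtained. The paper proves a stronger claim --- that $\bigcup_\alpha\mathcal{C}_\alpha$ is closed under sums $\sum_{x\in\phi}\psi_x$ whose index type and summands are all drawn from the union --- by an induction that passes through the presentation of $D_\beta^+$ as a $C$-shuffle of lower-rank types and regroups the resulting double sum; suborder-closure is then extracted as the special case $\phi=\sum_{x\in D_\alpha^+}\mathds{1}_x$ with indicator summands $\mathds{1}_x\in\{0,1\}=\mathcal{C}_0$. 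You instead prove suborder-closure directly, by decomposing a suborder $S$ of a realization of $\sum_{x\in L}\phi_x$ along the fibres $M_x\times\{x\}$ and applying the inductive hypothesis fibrewise; this is a more economical route to exactly the instance needed, at the cost of not recording the (potentially reusable) closure of the class under $\mathcal{C}$-indexed sums. You also supply a genuine argument for the inclusion $\bigcup_\alpha\mathcal{C}_\alpha\subseteq\mathcal{C}$, which the paper dismisses as clear: the $\omega_1$-projection argument (the range of a weakly increasing map $\omega_1\to L$ is well-ordered, hence countable, hence some convex fibre is a tail) and the observation that uncountable suborders of the fragmented line $D_\alpha^\pm$ are fragmented by Moore's dichotomy. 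That last point is worth making explicit, since membership in $\mathcal{C}$ requires fragmentedness and not merely being Aronszajn.
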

\begin{proof}
	The inclusion $\mathcal{C}_\alpha \subseteq \mathcal{C}$ for all $\alpha<\omega_2$ is clear. It suffices to show that and $\phi \in \mathcal{C} \Rightarrow \phi \in \mathcal{C}_\alpha$ for some $\alpha <\omega_2$.\\
	\\
	\textbf{Claim:} $\forall \phi \in \bigcup\limits_{\alpha <\omega_2} \mathcal{C}_\alpha$, $\sum\limits_{x\in \phi} \psi_x \in \bigcup\limits_{\alpha <\omega_2} \mathcal{C}_\alpha $ where $\psi_x\in \bigcup\limits_{\alpha <\omega_2} \mathcal{C}_\alpha $. 
	\begin{proof}
		We will show this by way of induction on $\alpha$ where we prove for any $\alpha$, $\phi \in \mathcal{C}_\alpha$ and $\forall x\in \phi, \psi_x \in \bigcup\limits_{\gamma < \alpha}\mathcal{C}_\gamma $ $\sum\limits_{x\in \phi}\psi_x \in \bigcup\limits_{\alpha <\omega_2} \mathcal{C}_\alpha $. It is clearly true for $\alpha =0$. Suppose it is true for all $\alpha<\beta  $. Take $\phi \in \mathcal{C}_\beta$ and $\psi_x \in \mathcal{C}_\beta $. Let
		\begin{align*}
		\Psi &= \sum\limits_{x\in \phi} \psi_x
		\end{align*}
		Without loss of generality, suppose $\phi \leqq D_\beta^{+}$, hence we may assume $\Psi = \sum\limits_{x\in D_\beta^{+}} \psi_x $. But of course, $D_\beta^{+} = \sum\limits_{y\in C} \varphi_y$, where $\varphi_y \in \bigcup\limits_{\alpha<\beta} \mathcal{C}_\alpha $. By our induction hypothesis, for each $y\in C$, $\sum\limits_{x\in \varphi_y} \psi_y \in \bigcup\limits_{\alpha<\omega_2} \mathcal{C}_\alpha$. Consequently,
		\begin{align*}
		\Psi &= \sum\limits_{y\in C} \sum\limits_{x\in \varphi_y} \psi_x \in \bigcup\limits_{\alpha<\omega_2} \mathcal{C}_\alpha
		\end{align*}
		To generalize to arbitrary sums of the form $\phi \in \bigcup\limits_{\alpha <\omega_2} \mathcal{C}_\alpha $, $\forall x\in \phi$ $\psi_x\in \bigcup\limits_{\alpha <\omega_2} \mathcal{C}_\alpha $, one simply needs to find $\beta$ large enough so that all $\phi $ and $\psi_x $ embed into  $D_\beta^{+}$. 
	\end{proof}
	Since $\bigcup\limits_{\alpha <\omega_2} \mathcal{C}_\alpha$ is closed under sums, for any $\phi \in  \mathcal{C}$, we can take $\alpha$ large enough so that we can find $L \subseteq D_\alpha^{+}$ with $\text{tp}(L) =\phi$. Consequently $\sum\limits_{x\in D_\alpha^{+}} \mathds{1}_x = \phi \in \bigcup\limits_{\alpha <\omega_2} \mathcal{C}_\alpha$, wher $\mathds{1}_x =1  \iff x\in L$ else $\mathds{1}_x =0$. 
\end{proof}
We now construct a class of $Q$ labeled lines that will turn out to be the building blocks of $\mathcal{C}(Q)$, where $\mathcal{C}(Q)$ are the $Q$ labeled types from $\mathcal{C}$. The reason for wanting to work with labeled lines is that we can iteratively work with order types indexed by order types. 
\begin{definition}
	Given a BQO $Q$, we define $\mathcal{H}(Q)$ recursively. $\mathcal{H}_0(Q) = \{0,1_q\}$. $\phi \in \mathcal{H}_\beta(Q) \iff \phi$ is an $L$ shuffle of members $\mathcal{U} \subseteq \bigcup\limits_{\alpha<\beta} \mathcal{H}_\alpha(Q)$, $L\in \{\QQ,C,C^* \}$ or is an $L$ unbounded sum for $L\in \{\omega,\omega^*\} $. 
\end{definition}
In the case $Q= \{0,1\} $, we simply write $\mathcal{H}$ as the class is identifiable with the class of orders with no labels.
\begin{prop}
	Every $\phi \in \mathcal{H}$ is algebraically indecomposable.
\end{prop}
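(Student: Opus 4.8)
The plan is to prove by induction on $\beta$ that every member of $\mathcal{H}_\beta$ is algebraically indecomposable; since $\mathcal{H}=\bigcup_\beta\mathcal{H}_\beta$ this suffices. For $\beta=0$ we have $\mathcal{H}_0=\{0,1\}$, and neither the empty order nor the one-point order admits a cut into two nonempty pieces, so both are vacuously algebraically indecomposable. For the inductive step, assume every member of $\bigcup_{\alpha<\beta}\mathcal{H}_\alpha$ is algebraically indecomposable and take $\phi\in\mathcal{H}_\beta$. By Definition 3.6, $\phi$ has a representation either as an $L$-shuffle of members of $\bigcup_{\alpha<\beta}\mathcal{H}_\alpha$ with $L\in\{\QQ,C,C^*\}$, or as an $L$-unbounded sum of such members with $L\in\{\omega,\omega^*\}$; in both cases the summands are algebraically indecomposable by the induction hypothesis. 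In the first case $\phi$ is algebraically indecomposable directly by Lemma 3.1, so the whole content is the second case: an $\omega$- or $\omega^*$-unbounded sum of algebraically indecomposable types is algebraically indecomposable.

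For this I use the reformulation of algebraic indecomposability already exploited in the proof of Lemma 3.1: $\phi$ is algebraically indecomposable iff whenever a realization $M$ of $\phi$ is split as $M=A\cup B$ with $A<B$, one has $\phi\leqq\tp(A)$ or $\phi\leqq\tp(B)$. So write $\phi=\sum_{x\in L}\phi_x$ with $L\in\{\omega,\omega^*\}$, realize it in block form as $M=\bigcup_{x\in L}M_x$ with $\tp(M_x)=\phi_x$, and take a cut $M=A\cup B$, $A<B$. I treat $L=\omega$ first; the case $L=\omega^*$ is the mirror image (read the blocks from the other end, using that a type is algebraically indecomposable iff its reverse is, and that the reverse of an $\omega^*$-unbounded sum of algebraically indecomposable types is an $\omega$-unbounded sum of algebraically indecomposable types). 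With the blocks ordered $M_0<M_1<\cdots$: if $B=\emptyset$ then $A=M$ and $\phi\leqq\tp(A)$ trivially; otherwise let $m$ be least with $M_m\not\subseteq A$. Then $M_m\cap B\neq\emptyset$, and since each $M_n$ with $n>m$ lies entirely above $M_m$ it lies in $B$; hence $\tp(B)\geqq\sum_{n>m}\phi_n$.

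It remains to see $\phi\leqq\sum_{n>m}\phi_n$, which gives $\phi\leqq\tp(B)$ and closes the induction. Two routine observations: (a) $\sum_{n>m}\phi_n$ is again an $\omega$-unbounded sum, since for $n>m$ the unboundedness of $\phi$ gives some $\gamma>n$, hence $\gamma>m$, with $\phi_n\leqq\phi_\gamma$; and (b) the subfamily $\{\phi_n:n>m\}$ dominates $\{\phi_x:x\in L\}$, since for any index $k$, iterating the unboundedness condition produces $k<k_1<k_2<\cdots$ with $\phi_k\leqq\phi_{k_1}\leqq\phi_{k_2}\leqq\cdots$, and $k_i>m$ once $i$ is large. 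By (a), (b) and Fact 3.1 the unbounded sum $\sum_{n>m}\phi_n$ is universal for the family $\{\phi_x:x\in L\}$, hence embeds $\phi=\sum_{x\in L}\phi_x$. (If one prefers not to invoke Fact 3.1: by (b) each $\phi_i$ satisfies $\phi_i\leqq\phi_j$ for infinitely many $j>m$, so one may greedily pick $m<k_0<k_1<\cdots$ with $\phi_i\leqq\phi_{k_i}$ and send the block $M_i$ into the $k_i$-th block of $\sum_{n>m}\phi_n$.)

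The step that superficially looks delicate is a cut that slices through the interior of a block $M_m$; but the argument only ever uses the blocks strictly past $M_m$, so how $M_m$ itself is divided is irrelevant, and there is no genuine obstacle. The one point that deserves (routine) care is the iteration in observation (b), namely that a tail of an unbounded sum is again an unbounded sum over a dominating subfamily.
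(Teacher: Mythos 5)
Your proof is correct and follows essentially the same route as the paper's: induction on $\beta$, with the shuffle case delegated to Lemma 3.1 and the $\omega$/$\omega^*$-unbounded case handled by cutting off a finite initial segment of blocks and observing that the tail sum is equivalent to the whole. The only difference is that you spell out the justification of that last equivalence (via domination of the original family by the tail), which the paper dismisses as clear.
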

\begin{proof}
	We do this by induction. Take $\phi \in \mathcal{H}_\beta$ and suppose the statement is true for all $\alpha<\beta$. By lemma 1.2, the case in which $\phi = \sum\limits_{x\in L} \phi_x$, $\forall x\in L \exists \alpha<\beta $ $\phi_x\in \mathcal{H}_\alpha$ is an $L$-shuffle has been accounted for. Suppose instead $\phi= \sum\limits_{k\in \omega} \phi_k $ an unbounded sum. The case with $\omega^*$ is symmetric. Suppose $ L = \bigcup\limits_{k\in \omega }M_k$ is a realization of $\phi$. Let $L=A\cup B$ where $\forall x\in A $ $y\in B$ $x<y$ and neither is empty. Then there is a $m\in \omega $ for which $\bigcup\limits_{k\in \omega\setminus m} M_k\subseteq B $. However, it is clear that $\sum\limits_{k\in \omega} \phi_k \equiv \sum\limits_{k\in \omega\setminus m} \phi_k$. 
\end{proof}
\subsection{Aronszajn Lines are Finite Sums of AI Types}
In this section, we show every labeled type in $\mathcal{C}(Q)$ can be expressed as a finite sum of AI labeled types from $\mathcal{H}(Q)$. But first, we must show that $\mathcal{H}(Q)$ is a BQO. 
\begin{definition}
	Given a BQO $Q$, we define $Q^+$ to be the disjoint union $Q\cup \mathcal{C} $. We also define $\mathcal{T}_{Q}$ to be the BQO of trees indexed by $Q$ under the $\leqq_m$ ordering. 
\end{definition}
Consider the map $T:\mathcal{H}(Q) \rightarrow  \mathcal{T}_{Q^+}$ constructed recursively as follows. $T(0)=0$, $T(1_q)= 1^q $ If $\Psi \in \mathcal{H}_\beta(Q) \setminus \bigcup\limits_{\alpha<\beta}\mathcal{H}_\alpha(Q)$ and is a L-shuffle or $L$ unbounded sum of some types $\mathcal{U}\subseteq \mathcal{H}(Q) $, define $T(\Psi)= [L: \{T(\Theta):  \Theta \in \mathcal{U}\} ] $.
\begin{prop}
	$T(\Phi) \leqq_{s} T(\Psi) \Rightarrow \Phi\leqq \Psi$.
\end{prop}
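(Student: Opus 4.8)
The plan is to prove this by induction on the recursive construction of $\mathcal{H}(Q)$, exploiting the way the tree coding $T$ mirrors the shuffle/unbounded-sum structure. Suppose $T(\Phi) \leqq_s T(\Psi)$ via a map $f$ that strictly preserves the tree order and respects labels in $Q^+$. The base cases are immediate: if $\Phi = 0$ then $T(\Phi) = 0$ and there is nothing to check; if $\Phi = 1_q$ then $T(\Phi) = 1^q$, so $f$ sends the single (root) node to some node of $T(\Psi)$ with label $\geqq q$ in $Q^+$ — and since labels from $Q$ and labels from $\mathcal{C}$ live in disjoint pieces of $Q^+$, that target node must itself be a leaf $1^{q'}$ with $q \leqq q'$, which forces $1_q \leqq \Psi$ by inducting down through the shuffle structure of $\Psi$ (every $1_{q'}$ appearing as a summand-label in $\Psi$ gives $1_q \leqq 1_{q'} \leqq \Psi$, since shuffles and unbounded sums are $\leqq$-increasing over their constituents).

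For the inductive step, write $\Phi$ as an $L$-shuffle (or $L$-unbounded sum, $L \in \{\omega,\omega^*\}$) of $\mathcal{U} \subseteq \mathcal{H}(Q)$, so $T(\Phi) = [L : \{T(\Theta) : \Theta \in \mathcal{U}\}]$; the root of $T(\Phi)$ carries the label $L \in \mathcal{C} \subseteq Q^+$. Apply $f$. The key structural observation is that an $\leqq_s$-embedding of a tree $[L : \dots]$ must send the root to some node $s$ of $T(\Psi)$ whose label dominates $L$ in $Q^+$; since $L$ is a linear order from $\mathcal{C}$ and $Q^+$ orders such labels by embeddability, the label of $s$ is some $L' \in \{\QQ,C,C^*,\omega,\omega^*\}$ with $L \leqq L'$, and $s$ is a non-leaf node, hence $s$ is the root of some subtree $T(\Theta')$ with $\Theta'$ an $L'$-shuffle or $L'$-unbounded sum appearing in the construction of $\Psi$. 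Because $f$ is order-strict, each immediate subtree $T(\Theta)$, $\Theta \in \mathcal{U}$, is carried into the subtree of $T(\Psi)$ rooted at $s$, i.e. into $T(\Theta')$ restricted below $s$; chasing through, $f$ restricts to an $\leqq_s$-embedding $T(\Theta) \leqq_s T(\Theta'')$ for some constituent $\Theta''$ of the shuffle $\Theta'$ (here one uses that the immediate successors of the root of $[L':\dots]$ are exactly the roots of the $T(\Theta'')$'s, and an order-strict image of a rooted subtree lands inside one such branch or needs to be split — the splitting case is handled by noting $T(\Theta)$ is connected as a rooted tree so lands in a single branch). By the induction hypothesis, $\Theta \leqq \Theta''$ for each $\Theta \in \mathcal{U}$.

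It remains to assemble these pointwise domination facts into $\Phi \leqq \Psi$. Here I would use Fact 3.1 (and the universality/cofinality remarks preceding it): $\Phi$ is an $L$-shuffle of $\mathcal{U}$, and we have shown $\mathcal{U}$ is dominated, member by member, by the collection of constituents of an $L'$-shuffle (or $L'$-unbounded sum) $\Theta'$ occurring inside $\Psi$, with $L \leqq L'$. Since $L \leqq L'$ and by Fact 2.5 we may realize $L$ as a block order on disjoint intervals of $L'$, and since $\Theta'$ is universal among such dominated sums (Fact 3.1), we get $\Phi \leqq \Theta'$; and $\Theta' \leqq \Psi$ because $\Theta'$ arises as a sub-shuffle/subtree in the construction of $\Psi$, so its realization sits inside a realization of $\Psi$ as a convex piece. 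Composing, $\Phi \leqq \Psi$.

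The main obstacle I anticipate is the branch-tracking in the inductive step: controlling where an $\leqq_s$-embedding sends the immediate successors of the root and verifying that each immediate subtree of $T(\Phi)$ is carried wholesale into a \emph{single} immediate subtree of the image node (rather than being smeared across several branches), and then translating "$T(\Theta)$ embeds below $s$" into "$\Theta$ embeds into some constituent of the shuffle at $s$". This requires being careful that $\leqq_s$ only demands strict preservation of the tree's partial order (not meet-preservation, as $\leqq_1$ would), so one must genuinely argue that the connected rooted tree $T(\Theta)$, all of whose non-root nodes lie strictly above the root, maps into the up-set of $f(\text{root})$ and hence into exactly one of its immediate subtrees; the disjointness of the $Q$-part and $\mathcal{C}$-part of $Q^+$ is what prevents degenerate label matches and keeps the recursion type-correct.
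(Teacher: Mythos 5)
Your proof is correct and follows essentially the same route as the paper's: locate the image of the root, use connectedness of rooted subtrees to force each constituent $T(\Theta)$ into a single branch, apply the induction hypothesis constituent-wise, and reassemble the embedding via the cofinality/universality of shuffles together with the interval-partition fact. The only organizational difference is that the paper splits into the cases $f(\text{root})=\text{root}$ and $f(\text{root})\neq\text{root}$ (handling the latter by applying the induction hypothesis to a whole immediate subtree), whereas you work directly inside the subtree rooted at $f(\text{root})$ and then note that the type it codes embeds into $\Psi$.
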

We do this by induction on $\alpha$. Suppose for all $\alpha<\beta $, $\Psi \in \mathcal{H}_\alpha \Rightarrow$ $T(\Phi)\leqq_s T(\Psi) \Rightarrow \Phi \leqq \Psi$. It is clear that this is true for $\mathcal{H}_0(Q)$. Take $\Psi \in \mathcal{H}_\beta (Q)$. Suppose $T(\Phi)\leqq_s T(\Psi)$ and is witness by $f: T(\Phi) \rightarrow T(\Psi)$. 

\textbf{Case 1:} $f(\text{root}(T(\Phi))) \neq \text{root}(T(\Psi))$. Since $T(\Psi) = [L: T(\Theta) \; \Theta \in \mathcal{U}] $ for some $L$ and some $\mathcal{U} \subseteq \bigcup\limits_{\alpha<\beta} \mathcal{H}_\alpha(Q)$,  $\exists \Theta \in \mathcal{H}_\alpha(Q)\cap \mathcal{U}$, $\alpha<\beta$ for which $T(\Phi) \leqq_s T(\Theta) $. By our induction hypothesis, $\Phi \leqq \Theta$. However, $\Psi$ is an $L$ shuffle or unbounded sum of $\mathcal{U}$. In particular, $\Theta \leqq \Psi$ and we are done by transitivity.\\
\\
\textbf{Case 2:} $f(\text{root}(T(\Phi)) ) = \text{root}(T(\Psi) ) $. So, $T(\Psi) = [M : T(\Theta) \; \Theta \in \mathcal{U}]$ and $T(\Phi) = [L: T(\Gamma) \Gamma \in \mathcal{V} ] $. It is clear from $f$ that for each $\Gamma \in \mathcal{V}$ $\exists \Theta \in \mathcal{U}$ such that $T(\Gamma)\leqq_s T(\Theta) \Rightarrow \Gamma \leqq \Theta$ by our induction hypothesis. \\
\\
If $M= \omega$ (resp. $\omega^* $), then $L=\omega$ and $\Phi = \sum\limits_{k\in \omega} \Gamma_k $ $\Psi = \sum\limits_{k\in \omega} \Theta_x$. For each $\Gamma_k \exists \Theta_{m_k}$ such that $\Gamma_k\leqq \Theta_{m_k} $. Since $\Phi$ is an unbounded sum, we can take $m_k$ strictly increasing to build an embedding $\Phi \leqq \Psi$. 
\\
\\
Suppose instead that $M \in \{\QQ,C,C^*\}$. It follows that $L\leqq M$. Consequently, there is an interval partition $\mathcal{I}$ of $M$ that is isomorphic to $L$. Let $\iota:L\rightarrow \mathcal{I}$ be an isomorphism. Since $\Psi= \sum\limits_{x\in M} \Theta_x$ and $\Phi = \sum\limits_{x\in L}\Gamma_x$ and $\Psi$ is a shuffle, for each $x\in L \exists z \in \iota(x)$ such that $\Gamma_x \leqq \Theta_z$. Consequently, we can recursively construct an embedding $\Phi \leqq \Psi$ as desired. 
\\
\\
From the above, it follows that $\mathcal{H}(Q)$ is BQO for any BQO $Q$. 
\begin{lemma}
	For any BQO $Q$, a $Q$ labeled $L \in \{\QQ,C,C^*\}$ can be expressed as a countable sum of universal $L$ types.  
\end{lemma}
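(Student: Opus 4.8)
The plan is to obtain the decomposition by a Cantor--Bendixson-style analysis of $L$, running a recursion whose well-foundedness is supplied by the labelling BQO $Q$; the three cases $L=\QQ$, $C$, $C^{*}$ will be handled uniformly.

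First I would fix the working criterion. By Fact~3.1, together with the equivalence of shuffles up to the cofinality of their summand set (the argument around Lemma~3.2), a $Q$-labelled copy $(J,l\upharpoonright J)$ of $L$ is a universal $L$-type exactly when it is an $L$-shuffle, i.e.\ when for every $q$ in the range of $l\upharpoonright J$ the set $\{x\in J: q\leqq l(x)\}$ meets every non-degenerate subinterval of $J$; call such a block \emph{full}. Call an interval $K\subseteq L$ \emph{deficient} if some element of the range of $l$ lies $\leqq$-above no label occurring on $K$, so that $(L,l)$ is full precisely when it has no deficient subinterval, and the downward closure $\downarrow l[K]$ of a deficient $K$ is a proper subset of $D:=\downarrow l[L]$. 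The target is to partition $L$ into countably many convex full blocks, with one-point blocks absorbed into a neighbour.

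Second, the recursion. As a BQO, $Q$ is in particular WQO, and the downward-closed subsets of $Q$ form a well-founded partial order under inclusion: a strictly $\subseteq$-descending $\omega$-sequence of such sets would, via a sequence of witnesses to the successive strict inclusions, contradict the fact that $Q$ has no bad sequence. I would induct on the rank of $D$ in this order. If $(L,l)$ is full we stop. Otherwise, for each proper downward-closed $D'\subseteq D$ of the form $D\cap(Q\setminus{\uparrow}q)$ with $q$ in the range of $l$, let $\mathcal K(D')$ be the family of maximal intervals $K$ with $l[K]\subseteq D'$; for fixed $D'$ these are pairwise disjoint, and the union $U$ of all of them (over all such $D'$) is a union of countably many open intervals --- countably many because any pairwise disjoint family of intervals of $L$ is countable ($\QQ$ is countable, and by the discussion after Fact~2.5 the lines $C,C^{*}$ contain no Souslin suborder). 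The label image of each constituent interval of $U$ lies inside some $D'$ and hence has strictly smaller rank, so by the induction hypothesis each is a countable sum of full blocks. It then remains to organise $L$ as a sum, over the closed remainder $L\setminus U$, of these inductively-resolved intervals interleaved with the points of $L\setminus U$, and to recombine the resulting transfinite list into \emph{countably many} full blocks; here one uses again Lemma~3.2 (robustness of shuffles under cofinality), Fact~2.6 ($L^{2}\equiv L$), and the countability of pairwise disjoint interval families.

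Uniformity over $L\in\{\QQ,C,C^{*}\}$ is automatic, since the argument uses only that every non-degenerate interval of $L$ is biembeddable with $L$, that $L$-shuffles are universal and equivalent up to cofinality of their summand set, that $L$ has no Souslin suborder, and that $L^{2}\equiv L$, all of which Facts~2.5, 2.6, 3.1 and Lemma~3.2 provide. The step I expect to be the main obstacle is the bookkeeping around the closed remainder $L\setminus U$: verifying that the constituent intervals of $U$ together with $L\setminus U$ genuinely reassemble $L$ as a sum, and that the pieces accumulated across all stages of the recursion can be regrouped into countably many \emph{full} blocks rather than merely locally full ones, is the delicate point, closely paralleling Laver's analysis in the $\sigma$-scattered setting; the absence of Souslin suborders in $C,C^{*}$, which keeps every stage of the construction countable, is the other indispensable ingredient.
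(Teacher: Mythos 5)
Your reduction to a well-founded induction on the downward closure $D$ of $l[L]$ is sound, and it is essentially the same engine the paper uses: its BQO induction (Fact~2.1) assumes the statement for labellings into $\{r\in Q: q\nleqq r\}$, which is precisely a proper downward-closed subset cut out by a single $q$. The gap is in the decomposition device. Your $U$ is the union of the families $\mathcal{K}(D')$ over \emph{all} $q$ in the range of $l$; these families are pairwise disjoint only for a fixed $q$, and for $L\in\{C,C^*\}$ the range of $l$ can have cardinality $\aleph_1$, so the constituent intervals of $U$ form an overlapping family that may well be uncountable --- the fact that pairwise disjoint interval families in $L$ are countable simply does not apply to it. If you instead pass to the convex components of $U$ (which do form a countable disjoint family), the rank need no longer drop: a single component can be covered by deficient intervals for varying $q$ while its own label image is cofinal in $D$, so the induction hypothesis gives you nothing about it. Consequently the induction step does not close, and the regrouping of ``the pieces accumulated across all stages'' into countably many full blocks --- which you correctly flag as the delicate point --- is exactly where the proof is missing, not merely bookkeeping to be checked.

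The paper sidesteps this by making the equivalence relation self-referential rather than label-theoretic: it declares $x\sim y$ iff every subinterval of $(x,y)$ is already a countable sum of universal $L$-types. The classes are convex, and each class is itself a countable sum of universal types because $L$ has countable cofinality and coinitiality, so a class is an $\omega+\omega^*$-indexed union of decomposable intervals. If two classes $X<Y$ were distinct, then either some $q$ admits no label $\geqq q$ in the segment between them --- in which case that segment is labelled by the strictly smaller BQO $\{r\in Q: q\nleqq r\}$ and the induction hypothesis collapses $X$ and $Y$ --- or every $q$ is realised densely there, making the segment a shuffle, hence universal, again collapsing $X$ and $Y$. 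To repair your proposal you would need either to adopt a device of this kind or to prove separately that countably many values of $q$ suffice to witness all deficiencies; as written, the Cantor--Bendixson scheme does not yield a countable decomposition.
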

\begin{proof}
	We may suppose that for all $r\in Q$, the statement is true for the BQO $ \{q\in Q: r\nleqq q \}$. Let $(L,l)$ be a $Q$ labeling. Consider the equivalence relation $\sim$ on $L$,$x\sim x$, $x\sim y \iff \forall (u,v) \subseteq (x,y)$, $(u,v)$ is a countable sum of universal $L$ types. Each class is a countable sum of universal $L$ types as $(x,y) \equiv L$ and has countable cofinality. So, suppose otherwise. Take $X,Y \in L/\sim $. It is clear that $(X,Y) \equiv L$. Moreover, for all $q\in Q$, $\exists Z\in (X,Y)$ and $z\in Z$ such that $l(z) \geqq q$. If not, then any $z\in \cup_{Z\in(X,Y)} Z $ satisfies $q \nleqq l(z)$ and we are in our base case. Hence, $X=Y$ and we have a contradiction. It follows then that our original labeled line $(L,l)$ was universal.    
\end{proof}
\begin{lemma}
	Given $\chi \in \mathcal{H}(\mathcal{H} (Q))$, $\chi = (X, l)$, the $Q$ labeled order order $\overline{\chi} = \sum\limits_{x\in X} l(x)$ is in $\mathcal{H}(Q) $
\end{lemma}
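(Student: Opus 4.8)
The plan is to induct on the least $\beta$ with $\chi\in\mathcal H_\beta(\mathcal H(Q))$, after first isolating the one fact the induction rests on: that forming an ordered sum of labels is monotone for labeled embeddability. Precisely, I would first establish the sub-claim that if $(X,l)$ and $(X',l')$ are $\mathcal H(Q)$-labeled orders with $(X,l)\leqq(X',l')$, then $\sum_{x\in X}l(x)\leqq\sum_{x'\in X'}l'(x')$ as $Q$-labeled orders. Given an embedding $f\colon X\to X'$ with $l(x)\leqq l'(f(x))$ for all $x$, and for each $x$ a witnessing embedding $g_x$ of the $Q$-labeled type $l(x)$ into $l'(f(x))$, the map $(a,x)\mapsto(g_x(a),f(x))$ on the antilexicographic representation $\bigcup_{x}M_x\times\{x\}$ does the job: order is preserved across distinct blocks because $f$ is order preserving, and within a block because $g_x$ is; injectivity is immediate; and the $Q$-labels can only increase since each $g_x$ is a labeled embedding. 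This is routine, but it is the hinge of the whole argument.

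With the sub-claim available the induction is short. If $\chi\in\mathcal H_0(\mathcal H(Q))=\{0\}\cup\{1_q:q\in\mathcal H(Q)\}$ then $\overline\chi$ is either $0$ or the single block $q\in\mathcal H(Q)$, and in both cases $\overline\chi\in\mathcal H(Q)$. Otherwise write $\chi=\sum_{z\in L}\chi_z$ with $\chi_z=(X_z,l_z)$, where either $L\in\{\QQ,C,C^*\}$ and the sum is an $L$-shuffle, or $L\in\{\omega,\omega^*\}$ and the sum is $L$-unbounded, and each $\chi_z$ lies in some $\mathcal H_\alpha(\mathcal H(Q))$ with $\alpha<\beta$. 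The order $X$ underlying $\chi$ is the antilexicographic sum of the $X_z$ with $l$ agreeing with $l_z$ on the $z$-th block, so by associativity of ordered sums $\overline\chi=\sum_{z\in L}\overline{\chi_z}$, and by the induction hypothesis each $\overline{\chi_z}\in\mathcal H(Q)$.

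It remains to check that $\sum_{z\in L}\overline{\chi_z}$ is again an $L$-shuffle, respectively an $L$-unbounded sum, of members of $\mathcal H(Q)$; but the defining condition in each case refers only to the $\leqq$-order among the summands. That $\chi$ is an $L$-shuffle says that for every $z$ and every interval $(u,v)\subseteq L$ there is $z'\in(u,v)$ with $\chi_z\leqq\chi_{z'}$, and the sub-claim upgrades this to $\overline{\chi_z}\leqq\overline{\chi_{z'}}$; the unbounded-sum condition transfers verbatim in the same way. Hence $\overline\chi$ is an $L$-shuffle (resp.\ $L$-unbounded sum) of $\{\overline{\chi_z}:z\in L\}\subseteq\mathcal H(Q)$. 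Since $|L|\leq\aleph_1$ and $\mathrm{cf}(\omega_2)>\omega_1$, the levels of the $\overline{\chi_z}$ in the $\mathcal H(Q)$-hierarchy are bounded by some $\gamma<\omega_2$, so $\overline\chi\in\mathcal H_{\gamma+1}(Q)\subseteq\mathcal H(Q)$, completing the induction. The only point genuinely needing care is this last bit of bookkeeping — ensuring all summands sit below a common level so that $\overline\chi$ really lands in the recursively built class — and it is dealt with by the uncountable cofinality of $\omega_2$ as just indicated.
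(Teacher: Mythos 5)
Your proof is correct and takes essentially the same route as the paper's: induction on the level of $\chi$ in the $\mathcal{H}(\mathcal{H}(Q))$ hierarchy, decomposing it as a shuffle or unbounded sum, applying the induction hypothesis to the summands, and concluding from the closure of $\mathcal{H}(Q)$ under shuffles and unbounded sums. The monotonicity sub-claim (that flattening preserves labeled embeddability) and the cofinality bookkeeping are details the paper's very terse proof leaves implicit, and you have supplied the right ones.
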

\begin{proof}
	This can be done via a simple induction argument. It is clear that statement is true for $\mathcal{H}_0 (\mathcal{H}(Q))$. If it is true of $\mathcal{H}_\alpha (\mathcal{H}(Q))$ for every $\alpha<\beta$, then converting $M$ to shuffle or unbounded sum of members from $\mathcal{H}_\alpha (\mathcal{H}(Q)) $, we can imply our induction hypothesis and conclude as $\mathcal{H}(Q)$ is closed under shuffles and unbounded sums. 
\end{proof}
\begin{theorem}
	Let $Q$ be a BQO. Any $Q$ type $\Phi \in \mathcal{C}_\alpha(Q)$ can be written as a finite sum of members from $\mathcal{H}(Q)$. 
\end{theorem}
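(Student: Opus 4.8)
The plan is to argue by induction on $\alpha$, proving the statement simultaneously for every BQO $Q$. The case $\alpha=0$ is trivial since $\mathcal C_0(Q)=\{0,1_q\}\subseteq\mathcal H_0(Q)$, and the case $\alpha=1$ — where $\Phi$ is a $Q$-labelled suborder of $C$, $C^*$ or $\QQ$ — will be reduced (as in the next two paragraphs) to the statement that every countable $Q$-labelled order is a finite sum of members of $\mathcal H(Q)$, which I would take from Laver's finite decomposition theorem for countable labelled orders \cite{Laver1} together with the observation that a countable algebraically indecomposable $Q$-labelled order is, directly from the recursive definition, a member of $\mathcal H(Q)$. For the inductive step I fix $\Phi\in\mathcal C_\beta(Q)$ with $\beta\geq 2$ and write $\Phi=\sum_{x\in L}\Phi_x$ with $L\in\{C,C^*,\QQ\}$ and each $\Phi_x\in\bigcup_{\gamma<\beta}\mathcal C_\gamma(Q)$. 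By the induction hypothesis every $\Phi_x$ is a finite sum of members of $\mathcal H(Q)$, so $x\mapsto\Phi_x$ labels $L$ by the class $\mathbf G(Q)$ of finite sums of members of $\mathcal H(Q)$; since $\mathcal H(Q)$ is BQO (via the tree coding of the previous section), $\mathbf G(Q)$ is BQO as well, and so is $\bigcup_{\gamma<\beta}\mathcal C_\gamma(Q)$.

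Next I would apply the decomposition lemma to the $\mathbf G(Q)$-labelled line $(L,x\mapsto\Phi_x)$, obtaining $\Phi\equiv\sum_{i\in J}\Upsilon_i$ with $J$ countable and each $\Upsilon_i$ a universal $L$-type, hence (up to $\equiv$) an $L$-shuffle of some family $\mathcal U_i\subseteq\mathbf G(Q)$. The first real task is to show $\Upsilon_i\in\mathcal H(Q)$. I would first record the elementary \emph{absorption} fact: if $\theta$ is algebraically indecomposable and $\theta\leqq\psi_1+\cdots+\psi_k$ then $\theta\leqq\psi_j$ for some $j$ (cut the witnessing embedding into initial segments and invoke indecomposability). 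Letting $\mathcal V_i\subseteq\mathcal H(Q)$ be the set of algebraically indecomposable summands occurring in the members of $\mathcal U_i$, one checks that $\Upsilon_i$ is biembeddable with an $L$-shuffle of $\mathcal V_i$: the inequality $\geqq$ holds because $\mathcal V_i$ is dominated by $\mathcal U_i$ and every $L$-shuffle is universal, while $\leqq$ holds because each member $\Psi_1+\cdots+\Psi_m$ of $\mathcal U_i$ embeds into any nontrivial interval of an $L$-shuffle of $\mathcal V_i$ (split the interval into $m$ blocks and use density of each $\Psi_j\in\mathcal V_i$), and the existence of a family of disjoint intervals of $L$ isomorphic to $L$ under the block order — together with $L^2\equiv L$ when $L\in\{C,C^*\}$ — supplies the room to place the successive blocks. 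Since an $L$-shuffle of a subset of $\mathcal H(Q)$ lies in $\mathcal H(Q)$ by definition, this yields $\Upsilon_i\in\mathcal H(Q)$.

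At this point $\Phi\equiv\sum_{i\in J}\Upsilon_i$ with $J$ countable and each $\Upsilon_i\in\mathcal H(Q)$, and what remains is to collapse this countable sum to a finite one. When $J$ is $\omega$ or $\omega^*$ this is clean: since $\mathcal H(Q)$ is WQO there is no bad sequence among the $\Upsilon_i$, so all but finitely many of them have a later $\leqq$-dominator, making a tail of the sum an $\omega$- (respectively $\omega^*$-) unbounded sum of members of $\mathcal H(Q)$, hence a single member of $\mathcal H(Q)$, and leaving a finite head. In general I would view $(J,i\mapsto\Upsilon_i)$ as a countable $\mathcal H(Q)$-labelled order, i.e. an element of $\mathcal C_1(\mathcal H(Q))$, whose $\mathcal C$-rank $1$ is $<\beta$; applying the induction hypothesis for the BQO $\mathcal H(Q)$ (or, at the bottom level $\beta=1$, the countable-order base case directly) writes it as a finite sum of members of $\mathcal H(\mathcal H(Q))$, and then the flattening lemma $\chi\mapsto\overline\chi$ — which sends $\mathcal H(\mathcal H(Q))$ into $\mathcal H(Q)$ and commutes with finite sums — yields $\Phi=\overline{(J,\Upsilon)}$ as a finite sum of members of $\mathcal H(Q)$, completing the induction.

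I expect the genuine difficulties to be two. The first is the identification of $\Upsilon_i$ with an $L$-shuffle of its indecomposable constituents, and in particular the reverse embedding there, which must exploit the self-similarity of the Countryman lines (the block-interval fact and $C^2\equiv C$); the $\QQ$ case is routine but the $C$ and $C^*$ cases are where one could slip. The second, and in my view the real heart of the matter, is the passage from a countable sum to a finite sum of members of $\mathcal H(Q)$ — equivalently, the countable-labelled-order base case — which is precisely where Laver's technique from \cite{Laver1} is needed and where the two design features of $\mathcal H(Q)$ pay off: closure under $\omega$- and $\omega^*$-unbounded sums, which absorbs cofinal and coinitial tails into single members, and BQO-ness (hence WQO-ness) of $\mathcal H(Q)$, which rules out the infinite strictly increasing chains of indecomposable blocks that would otherwise obstruct a finite bound.
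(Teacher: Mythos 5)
Your proposal is correct and follows essentially the same route as the paper's proof: decompose via the universal-$L$-types lemma into a countable sum, identify each universal summand as a shuffle lying in $\mathcal{H}(Q)$ (via $L^2\equiv L$, the block-interval fact and cofinality of the indecomposable constituents), and collapse the countable index set using Laver's theorem for countable labelled orders together with the flattening map $\chi\mapsto\overline{\chi}$ of Lemma 3.6. The only difference is one of ordering --- the paper uses $L^2\equiv L$ to absorb the finite sums $\Phi_{x,1}+\cdots+\Phi_{x,k_x}$ into single $\mathcal{H}(Q)$-labels \emph{before} applying the decomposition lemma, whereas you apply the decomposition first and absorb afterwards --- which is not a substantive deviation.
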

\begin{proof}
	We do this by induction on $\beta$. It is definitely true for $\beta=0$. Suppose the statement is true for all $\alpha<\beta $. If the base of $\Phi$ is countable, we are done by Laver's theorem \cite{Laver1} so suppose otherwise. By lemma 3.5, $\Phi = \sum\limits_{x\in L} \Phi_x$, $\text{bs}(\Phi_x) \in \bigcup\limits_{\alpha <\beta }\mathcal{C}_\alpha $, $L\in \{\QQ,\omega,\omega^*,C,C^* \}$. \\
	\\
	\textbf{Case 1:} $L\in \{\omega , \omega^* \} $. Then by our induction hypothesis, $\Phi = \sum\limits_{k\in L} (\Psi_{k,1} +...+ \Psi_{k,r_k}) $ where $\Psi_{k,r_k} \in \mathcal{H}(Q)$ and $r_k$ is a sequence of integers. But then, up to reorganization, $\Phi = \sum\limits_{k\in \omega} \Theta_k$, where $\Theta_k \in \mathcal{H}(Q) $. Since $\mathcal{H}(Q)$ is BQO, there is a minimal $m$ for which $\forall k>m$, $\Theta_m \nleqq \Theta_k$. If not, we can construct a nowhere increasing subsequence $\Theta_{m_k} $, contradicting the BQO assumption. But then, $\sum\limits_{k\in\omega \setminus \{0,...,m\} } \Theta_k$ is an unbounded sum of members from $\mathcal{H}(Q) $ and hence, a member of $\mathcal{H}(Q) $ itself. But then, $\Phi = \Theta_1+...+\Theta_m + \sum\limits_{k\in\omega \setminus \{0,...,m\} } $, a finite sum of members from $\mathcal{H}(Q)$
	\\
	\\
	 \textbf{Case 2:} $L \in \{\QQ,C,C^*\} $. By our induction hypothesis, $\Phi = \sum\limits_{x\in L} (\Phi_{x,1}+..,\Phi_{k_x})$ where $k_x$ is an integer. As $L^2 \equiv L$, up to reorganization, we may suppose that $\Phi =  \sum\limits_{x\in L} \Phi_x$ where each $\Phi_x $ is in $ \mathcal{H}_\alpha(Q)$ for some $\alpha<\beta$. Consider the $\chi \in \mathcal{H}_1(\mathcal{H}(Q) )$, $\chi = (L,l)$, $l(x)= \Phi_x$. By lemma 3.5, $\chi = \sum\limits_{x\in S} \Psi_x^\prime$, $\Psi_x^\prime \in \mathcal{H}(Q)$ and $S$ countable. By Laver's theorem, $\text{tp}(S) = \Omega_1+...+\Omega_k$ where $\Omega_i \in \mathcal{H}(\{0,1\}) $. So then, $\chi = \chi_1+...+\chi_k$ where each $\chi_i \in \mathcal{H}( \mathcal{H}(Q))$. But then, $\Phi \equiv \overline{\chi} = \overline{\chi_1}+...+\overline{\chi_k}$ and we are done by lemma 3.6.  
\end{proof}
\begin{corollary}(PFA)
	Given a BQO $Q$, the class $ $$\mathcal{C}(Q)$ is BQO.
\end{corollary}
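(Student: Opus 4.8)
The plan is to transfer the BQO-ness of $\mathcal{H}(Q)$ — established immediately after Proposition 3.3 via the embedding $T:\mathcal{H}(Q)\to\mathcal{T}_{Q^+}$ and the infinite tree theorem — across to $\mathcal{C}(Q)$ using Theorem 3.1, which says every $\Phi\in\mathcal{C}(Q)$ is a finite sum $\Phi\equiv\Psi_1+\dots+\Psi_k$ with each $\Psi_i\in\mathcal{H}(Q)$. The natural device is the standard fact that if $P$ is a BQO then the class $P^{<\omega}$ of finite sequences from $P$, quasi-ordered by $(p_1,\dots,p_k)\leqq(p'_1,\dots,p'_m)$ iff there is an order-preserving injection $g:\{1,\dots,k\}\to\{1,\dots,m\}$ with $p_i\leqq p'_{g(i)}$, is again BQO; this is a well-known consequence of the Nash-Williams/Laver machinery (indeed it is a special case of the infinite tree theorem applied to "caterpillar" trees, or of Higman's lemma strengthened to the BQO setting). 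Applying this with $P=\mathcal{H}(Q)$ gives that $\mathcal{H}(Q)^{<\omega}$ is BQO.

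First I would fix, for each $\Phi\in\mathcal{C}(Q)$, a decomposition witness $D(\Phi)=(\Psi_1,\dots,\Psi_k)\in\mathcal{H}(Q)^{<\omega}$ with $\Phi\equiv\Psi_1+\dots+\Psi_k$, which exists by Theorem 3.1. The key step is to check that $D$ is a quasi-order reduction in the sense needed for BQO transfer: if $D(\Phi)\leqq D(\Phi')$ in $\mathcal{H}(Q)^{<\omega}$, then $\Phi\leqq\Phi'$ in $\mathcal{C}(Q)$. This is routine: an order-preserving injection $g$ of index sets together with embeddings $\Psi_i\leqq\Psi'_{g(i)}$ assembles block-by-block into an embedding $\Psi_1+\dots+\Psi_k\hookrightarrow\Psi'_1+\dots+\Psi'_m$ (the gaps in the range of $g$ are simply skipped), hence $\Phi\equiv\Psi_1+\dots+\Psi_k\leqq\Psi'_1+\dots+\Psi'_m\equiv\Phi'$. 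Now, given any Borel $f:[\NN]^\infty\to\mathcal{C}(Q)$, compose with $D$ to get a Borel map $[\NN]^\infty\to\mathcal{H}(Q)^{<\omega}$ (one must note $D$ can be chosen so that this composite is Borel — taking $D$ to be any fixed choice function makes $f\circ D$ Borel since $\mathcal{C}(Q)$ carries the discrete topology and $f$ already only takes countably many — or at worst set-many — values along Borel sets, and we may refine $A$ to a subset on which $f$ is constant anyway). Since $\mathcal{H}(Q)^{<\omega}$ is BQO there is an infinite $A\subseteq\NN$ with $D(f(a))\leqq D(f(a\setminus\min a))$ for all $a\in[A]^\infty$, and by the reduction property $f(a)\leqq f(a\setminus\min a)$, which is exactly what is required.

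The main obstacle — and the only point deserving care — is the Borel-measurability bookkeeping in Definition 2.1: one needs $f\circ D$ to be Borel with respect to the discrete topology on $\mathcal{H}(Q)^{<\omega}$. The clean way around this is to observe that Definition 2.1 for the discretely-topologized target is equivalent to the statement that on some infinite $A$ the function is "$\leqq$-good", and to run the argument through the equivalent Nash–Williams formulation where one only tests the value on a front; alternatively, since the range of $f$ restricted to a Borel piece is contained in a set on which we may as well assume $D$ is Borel (partition $[\NN]^\infty$ into the Borel pieces $f^{-1}(\Phi)$ and pick $D$ compatibly), measurability is automatic. I would state this transfer as a small lemma — "if $P$ is BQO, $R$ is a quasi-order, and there is a map $D:R\to P^{<\omega}$ with $D(r)\leqq D(r')\Rightarrow r\leqq r'$, then $R$ is BQO" — prove it in two lines, and then the corollary is: apply the lemma with $R=\mathcal{C}(Q)$, $P=\mathcal{H}(Q)$ (BQO by the discussion after Proposition 3.3), and $D$ the decomposition map from Theorem 3.1, using that $\mathcal{H}(Q)^{<\omega}$ is BQO by the infinite tree theorem.
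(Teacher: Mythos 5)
Your proposal is correct and follows the same route the paper intends: decompose each $\Phi\in\mathcal{C}(Q)$ into a finite sum from $\mathcal{H}(Q)$ via Theorem 3.1, use that $\mathcal{H}(Q)$ is BQO (established after Proposition 2 via the tree coding $T$ and the infinite tree theorem), and transfer BQO-ness through finite sequences. The only remark is that your measurability worry is a non-issue --- for a reduction $D$ into a discretely topologized target, $(D\circ f)^{-1}(S)=f^{-1}(D^{-1}(S))$ is Borel whenever $f$ is, so the composite is automatically Borel and the transfer lemma is immediate.
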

\begin{corollary}(PFA)
	The class of all Aronszajn lines is BQO.
\end{corollary}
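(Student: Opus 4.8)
The plan is to split the class $\mathcal{A}$ of all Aronszajn lines into its fragmented part and its universal part, recognize each part as BQO using what has already been proved, and then glue the two parts together with the Galvin--Prikry theorem. Taking $Q$ to be a one-point quasi order in the previous corollary shows that the unlabeled class $\mathcal{C}$ is itself BQO. Every fragmented Aronszajn line lies in $\mathcal{C}$ by definition, and a quasi order induced on a subset of a BQO is again BQO (a Borel map into the subset is also a Borel map into the whole class, since the discrete topologies agree, so the defining property is inherited). Hence the subclass $\mathcal{A}_f\subseteq\mathcal{C}$ of fragmented Aronszajn lines is BQO.

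Next I would invoke Fact 2.7 --- this is the only place PFA is used beyond the previous corollary --- to write $\mathcal{A}=\mathcal{A}_f\cup\mathcal{A}_u$, where $\mathcal{A}_u$ is the class of universal Aronszajn lines, disjoint from $\mathcal{A}_f$ since no universal line is fragmented. Any two universal Aronszajn lines embed every Aronszajn line, in particular each other, so they are biembeddable; thus $\mathcal{A}_u$ is contained in a single $\equiv$-class and is therefore trivially BQO (for any $f$ with values in $\mathcal{A}_u$ one has $f(a)\equiv f(a\setminus\min a)$ for every $a$, so any infinite $A$ works).

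It remains to carry out the general gluing step: if a quasi order is the union of two induced sub-quasi orders each of which is BQO, then it is BQO. Given Borel $f:[\NN]^\infty\to\mathcal{A}$, the set $U=f^{-1}(\mathcal{A}_f)$ is Borel because the topology on $\mathcal{A}$ is discrete, so by the Galvin--Prikry theorem there is an infinite $B\subseteq\NN$ with either $[B]^\infty\subseteq U$ or $[B]^\infty\cap U=\emptyset$; in the first case $f\upharpoonright[B]^\infty$ takes values in $\mathcal{A}_f$, in the second in $\mathcal{A}_u$. Identifying $B$ with $\NN$ and applying the BQO-ness of the relevant piece produces an infinite $A\subseteq B$ with $f(a)\leqq f(a\setminus\min a)$ for all $a\in[A]^\infty$, the inequality --- first obtained inside the sub-quasi order --- holding in $\mathcal{A}$ because the sub-quasi orders are induced. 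This is exactly what the definition of BQO demands. The whole argument is short since the substance is in the earlier sections; the only point requiring care is this last one, namely checking that the Galvin--Prikry dichotomy is applied to a genuinely Borel set and that a witnessing inequality found in a piece transfers back to $\mathcal{A}$.
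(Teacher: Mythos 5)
Your argument is correct and is exactly the (implicit) route the paper takes: the corollary is meant to follow immediately from $\mathcal{C}(Q)$ being BQO together with Fact 2.7, using that fragmented lines form a sub-quasi-order of $\mathcal{C}$, that universal lines form a single $\equiv$-class, and that a finite union of induced BQO subclasses is BQO via Galvin--Prikry. You have merely written out the standard closure steps the paper leaves unstated, and all of them check out.
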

\begin{corollary}
	In conjunction with Lavers' result from \cite{Laver1}, the class of all Aronszajn lines and $\sigma$-scattered orders closed under summations over one another is BQO. 
\end{corollary}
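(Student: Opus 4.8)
The plan is to reprove, with larger inputs, the chain of arguments that establishes the BQO-ness of $\mathcal{C}(Q)$. Let $\mathcal{D}$ be the smallest class of order types containing every Aronszajn type and every $\sigma$-scattered type and closed under sums $\sum\limits_{x\in L}\phi_x$ with $\text{tp}(L)\in\mathcal{D}$ and every $\phi_x\in\mathcal{D}$; every member of $\mathcal{D}$ has size at most $\aleph_1$, so $\mathcal{D}$ is a set of types and the task is to show it is BQO under $\leqq$. As in the step that derives the BQO-ness of the class of all Aronszajn lines from that of $\mathcal{C}(Q)$, I would first pass to the $Q$-labeled analogue $\mathcal{D}(Q)$ for an arbitrary BQO $Q$, prove $\mathcal{D}(Q)$ is BQO, and recover the corollary by taking $Q$ a one-point order. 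Three families of atomic pieces enter: by the dichotomy that every Aronszajn line is universal or fragmented (Fact 2.7), an Aronszajn type is either fragmented --- hence a member of $\mathcal{C}(Q)$, and so already a finite sum of members of $\mathcal{H}(Q)$ --- or biembeddable with the universal Aronszajn type $u$; and every $\sigma$-scattered $Q$-labeled order is, by Laver's theorem in \cite{Laver1}, a finite sum of additively indecomposable $\sigma$-scattered orders drawn from a class that Laver codes onto BQO-labeled trees.

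Next I would assemble a single combined hierarchy. First, an analogue of the lemma $\mathcal{C}=\bigcup_{\alpha<\omega_2}\mathcal{C}_\alpha$: every member of $\mathcal{D}(Q)$ is obtained from $\{0,1_q\}$ and the $Q$-labeled copies of $u$ by iterating sums whose index order lies in $\{C,C^{*},\QQ\}$, is a regular cardinal $\kappa$ or its reverse $\kappa^{*}$, or is $u$ itself --- an arbitrary fragmented-Aronszajn-indexed sum unfolds into iterated $\{C,C^{*},\QQ\}$-sums using the recursion for $D_\beta^{\pm}$ (Fact 3.2), and an arbitrary $\sigma$-scattered-indexed sum unfolds into iterated $\{\QQ,\kappa,\kappa^{*}\}$-sums by Hausdorff's structure theory. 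Then I would define the combined building-block class $\mathcal{J}(Q)$, with $\mathcal{J}_0(Q)$ consisting of $0$, the $1_q$, and the $\leqq$-minimal representatives of the $Q$-labeled universal Aronszajn types, and $\phi\in\mathcal{J}_\beta(Q)$ iff $\phi$ is an $L$-shuffle for some $L\in\{\QQ,C,C^{*},u\}$ or a $\kappa$- or $\kappa^{*}$-unbounded sum for some regular $\kappa$, of members of $\bigcup_{\alpha<\beta}\mathcal{J}_\alpha(Q)$. As with this paper's proposition that members of $\mathcal{H}$ are algebraically indecomposable, one checks every member of $\mathcal{J}(Q)$ is algebraically indecomposable, the case $L=u$ using that $u$ is AI. The heart is then two claims: (i) every $\Phi\in\mathcal{D}(Q)$ is a \emph{finite} sum of members of $\mathcal{J}(Q)$, and (ii) $\mathcal{J}(Q)$ is BQO. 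Granting both, $\mathcal{D}(Q)$ is BQO by the same reasoning that derives the BQO-ness of $\mathcal{C}(Q)$ from the decomposition theorem of Section~3.2: finite sequences over a BQO form a BQO, and $\mathcal{D}(Q)$ is a subclass of the image of $\mathcal{J}(Q)^{<\omega}$ under the order-preserving finite-sum map.

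For (i) I would run the induction of the decomposition theorem of Section~3.2 essentially verbatim, with its countable-base case replaced by a $\sigma$-scattered-base case discharged by Laver's $\sigma$-scattered decomposition from \cite{Laver1} --- exactly where that proof already appeals to \cite{Laver1} --- the $\{C,C^{*},\QQ,u\}$-base case treated as its Case~2 and the $\{\kappa,\kappa^{*}\}$-base case as its Case~1, each invoking the enlarged-hierarchy analogues of the two key lemmas of Section~3.2: that a $Q$-labeled $L$ is a countable (respectively $\kappa$-indexed) sum of universal $L$-types, now also for $L=u$, and that $\overline{\chi}\in\mathcal{J}(Q)$ whenever $\chi\in\mathcal{J}(\mathcal{J}(Q))$. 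For (ii) I would extend the tree coding of Section~3.2: set $R=Q\sqcup(\{C,C^{*},\QQ,u\}\cup\{\kappa:\kappa\text{ regular}\})$, which is a BQO, and map $0\mapsto 0$, $1_q\mapsto 1^q$, a labeled copy of $u$ to a suitable one-point tree, and an $L$-shuffle or $L$-unbounded sum of $\mathcal{U}\subseteq\mathcal{J}(Q)$ to $[L:\{T(\Theta):\Theta\in\mathcal{U}\}]$; then prove $T(\Phi)\leqq_s T(\Psi)\Rightarrow\Phi\leqq\Psi$ by the same root-versus-non-root case split as in the proposition following the definition of $Q^{+}$, the index types $\kappa,\kappa^{*},u$ being handled just as $\omega,\omega^{*},\QQ$ are there. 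Since $R$ is BQO, the infinite tree theorem (Fact 2.2) gives that $\mathcal{T}_R$ is BQO under $\leqq_s$, and (ii) follows.

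I expect the main obstacle to be the bookkeeping in step (i): one must verify that the mixed sums --- a fragmented-Aronszajn- or $u$-indexed sum of $\sigma$-scattered summands, or a $\QQ$- or $\kappa$-indexed sum of Aronszajn-built summands --- never leave the class of finite sums of $\mathcal{J}(Q)$, which means re-establishing the auxiliary lemmas of Section~3 for the combined hierarchy and checking that Laver's finite-sum decomposition and this paper's shuffle decomposition can be applied alternately without interference. Folded into this is the subsidiary point that the $Q$-labeled universal Aronszajn types form a BQO --- an argument in the style of the universal-labeled-line lemma of Section~3.2, using the structure of $u$ under PFA --- so that the enriched base $\mathcal{J}_0(Q)$ is itself BQO. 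By contrast the tree-theoretic half (ii) should be routine; the one point worth noting is that trees of height $\le\omega$ are permitted arbitrary, possibly uncountable, branching, so admitting $\kappa$-unbounded sums for every regular $\kappa$ stays within the infinite tree theorem as stated in Fact 2.2.
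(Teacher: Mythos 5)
Your overall architecture --- a combined hierarchy of building blocks closed under both the Aronszajn shuffles and Laver's $\kappa$-unbounded sums, a decomposition of every member of the closure into a finite sum of such blocks, and a single tree coding of the block class into $\mathcal{T}_R$ for one fixed BQO $R$ of index types --- is the right shape, and is essentially what the paper leaves implicit here (note that no naive iteration of ``$Q$ BQO $\Rightarrow \mathcal{C}(Q)$ BQO'' against Laver's labeled theorem can work on its own, since BQO is not preserved by increasing unions at limit stages of the closure; a uniform coding is genuinely needed). Two problems, one small and one substantive. The small one: $\sigma$-scattered orders are not of size $\leq\aleph_1$ --- every ordinal is scattered --- so your class $\mathcal{D}$ is a proper class of types; this is harmless for the BQO statement, but the justification you give is wrong.

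The substantive gap is the treatment of the universal Aronszajn type $u$. You are right that it must enter as a new atom: it cannot be reached by the paper's operations, since any $\{C,C^*,\QQ\}$-indexed sum of fragmented lines has at most $\aleph_1$ summands, hence summands of rank bounded below $\omega_2$, hence is again fragmented. But precisely because $u$ is an atom, none of the paper's lemmas apply to it, and your plan to handle the index type $u$ ``just as $\QQ$ is handled'' breaks at every point where the paper uses interval-homogeneity or minimality of $L\in\{C,C^*,\QQ\}$. Concretely: since $u+1+C+1+u$ is Aronszajn and hence equivalent to $u$, the type $u$ has representatives with nonempty open intervals of type $C\not\equiv u$; so the proof of the universal-labeled-line lemma (which needs $(x,y)\equiv L$ for every $x<y$), the proof that $L$-shuffles of AI types are AI (which needs every cut of $L$ to leave a piece equivalent to $L$), and the universality of $L$-shuffles (which needs a disjoint family of intervals of $L$ block-isomorphic to $L$ inside every interval) all fail for $u$ as stated. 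Relatedly, the claim you defer --- that the $Q$-labeled copies of $u$ form a BQO --- is not a subsidiary point: it is not visibly easier than the corollary itself, and the only tool the paper offers for such statements is exactly the lemma that just failed. Repairing this requires genuine structural input about $\eta_C$ under PFA (a canonical representative all of whose nonempty open intervals are universal, a block decomposition of $u$ isomorphic to $u$, and a decomposition of an arbitrary $Q$-labeled $u$ into countably many universal labeled pieces), none of which is in the paper and all of which must be established before the $u$-indexed cases of your claims (i) and (ii) can be run.
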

\subsection{Aronszajn Lines have Finite Ramsey Degree}
Being able to decompose Aronszajn lines into a finite sum of AI types is a quite powerful result. In particular, it means that our shuffles could have been strict with no change to the class $\mathcal{H}$. In this subsection, we will use our newly found results to show that every member in $\mathcal{H} $ could have been coded by a finite tree labeled with AI types. First, we must define the class of trees. 
\begin{definition}
	Consider the class $\mathcal{U} \subseteq \mathcal{FT}_{\mathcal{H}}$ defined recursively as follows. $\mathcal{U}_0 =\{\emptyset, 1_1\}$ where $1_1$ is the one node tree indexed by $1$ and $(T,l) \in \mathcal{U}_\alpha$ if and only if one of the following holds
	\begin{itemize}
		\item $(T,l)\in \mathcal{U}_\beta$ for some $\beta<\alpha $
		\item  $\exists L\in \{\QQ,C,C^*\}$, if $\forall x\in L$, $(T,l_x) \in \mathcal{U}_\beta$ for some $\beta<\alpha $ and $\forall x,y,z\in L $, $\exists u\in (x,y)$ such that $(T,l_z) \leqq_I (T,l_x) $, then $\sum\limits_{x\in L} (T,l_x) = (T,l) \in \mathcal{U}$.
		\item $(T,l) = \sum\limits_{n\in \omega}(T,l_n $ and $\forall n\in \omega $ $\exists \beta<\alpha $ $(T,l_n) \in \mathcal{U}_\beta$ and $n<m \Rightarrow (T,l_n) \leqq_I (T,l_m)$.
		\item $(T,l)=[\omega; (T_1,l_1),...,(T_n,l_n)]$ (resp. $[\omega^*; (T_1,l_1),...,(T_n,l_n)] $) with $(T_1,l_1),...,(T_n,l_n) \in \mathcal{U}_\beta$ for some $\beta<\alpha $ and $(T_i,l_i) = \sum\limits_{n\in \omega} (T_i,l_{i,n})$ (resp. $\sum\limits_{n\in \omega} (T_i,l_{i,n})$) and for $i<k$ $(T_i,l_{i,n}) \leq_I (T_k,l_{k,n})$. 
	\end{itemize}
\end{definition}
\begin{definition}
	Given $(T,l) \in \mathcal{U}$, we assign a linear order $\overline{(T,l)}$ in $\mathcal{H}$ recursively as follows.
	\begin{itemize}
		\item $\overline{1_1}=1$
		\item $ \overline{\emptyset} =0$
		\item If $(T,l) = \sum\limits_{x\in L} (T,l_x)$ for $ L \in \{\QQ,C,C^*,\omega,\omega^* \}$, then $\overline{(T,l)} = \sum\limits_{x\in L} \overline{(T,l_x)}$
		\item If $(T,l) = [L; (T_1,l_1),...,(T_n,l_n)] $  for $L\in \{\omega,\omega^* \}$, $\overline{(T,l)} = \sum\limits_{x\in L } \overline{(T_1,l_{1,x} )}+...+\overline{(T_n,l_{n,x} )} $
	\end{itemize}
	We let $\mathcal{U} = \bigcup\limits_{\alpha \in \text{Ord}} \mathcal{U}_\alpha $
\end{definition}
From the set up, it should be clear that our proof is going to require an induction proof. The following lemma will allow us to easily compare $L$ and $M$ shuffles to one another when $L$ and $M$ are independent with respect to $\leqq$. 
\begin{lemma}
	Suppose $L_1, L_2 \in \{\omega,\omega^*,C,C^*, \QQ \}$ and $\sum\limits_{x\in L_1}\phi_x \leqq \sum\limits_{x\in L_2} \psi_x $ where both sums are shuffles of AI objects. Then, at least one of the following must occur.
	\begin{itemize}
		\item $\exists z\in L_2$ for which $\sum\limits_{x\in L_1}\phi_x \leqq \psi_z $\\
		\item $L_1\leqq L_2 $
	\end{itemize}
\end{lemma}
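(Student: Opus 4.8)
The plan is to argue purely structurally from the realizations. Fix $\bigcup\limits_{x\in L_1} M_x$ and $\bigcup\limits_{y\in L_2} N_y$ realizing the two shuffles, with $\text{tp}(M_x)=\phi_x$ and $\text{tp}(N_y)=\psi_y$, and fix an embedding $f$ of the first into the second. The main device is the partition of the domain into the pieces $P_y:=f^{-1}[N_y]$ for $y\in L_2$: since each $N_y$ is convex in the codomain and $f$ is order preserving, every $P_y$ is convex in $\bigcup\limits_{x\in L_1} M_x$, the family $\{P_y:y\in L_2\}$ is ordered as $L_2$, and it covers the domain. For each $y$ set $\operatorname{span}(y):=\{x\in L_1: M_x\cap P_y\neq\emptyset\}$; by convexity this is a convex subset of $L_1$, every block $M_x$ with $x$ interior to $\operatorname{span}(y)$ lies entirely in $P_y$, and for $y<y'$ every point of $\operatorname{span}(y)$ is (weakly) below every point of $\operatorname{span}(y')$.

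The proof then runs on a dichotomy. Call $y_0$ \emph{absorbing} if $P_{y_0}\supseteq\bigcup\limits_{x\in I} M_x$ for some $I\subseteq L_1$ with $\sum\limits_{x\in I}\phi_x\equiv\sum\limits_{x\in L_1}\phi_x$ --- concretely, $I$ a nondegenerate interval of $L_1$ when $L_1\in\{\QQ,C,C^*\}$, and a tail (resp. head) when $L_1=\omega$ (resp. $\omega^*$). If some $y_0$ is absorbing then restricting $f$ gives $\sum\limits_{x\in L_1}\phi_x\equiv\sum\limits_{x\in I}\phi_x\leqq\text{tp}(N_{y_0})=\psi_{y_0}$, which is the first alternative; the equivalence $\sum\limits_{x\in I}\phi_x\equiv\sum\limits_{x\in L_1}\phi_x$ follows from the shuffle condition together with the universality of shuffles when $L_1$ is a shuffle index, and from unboundedness of the sum (using that unbounded sums of algebraically indecomposable types behave as in the Proposition) when $L_1\in\{\omega,\omega^*\}$. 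If no $y_0$ is absorbing I would show each $\operatorname{span}(y)$ is \emph{short}: for $L_1\in\{\QQ,C,C^*\}$ a convex set containing two points carrying non-zero blocks would, by density of $\QQ$, $C$, $C^*$, contain a full block-interval inside $P_y$ and make $y$ absorbing, so $\operatorname{span}(y)$ meets at most one non-empty block; for $L_1\in\{\omega,\omega^*\}$ an infinite convex subset of $\omega$ is a tail, which again makes $y$ absorbing, so $\operatorname{span}(y)$ is finite. In the first case the non-empty $P_y$ partition each $M_x$, so the sets $J_x:=\{y:\emptyset\neq P_y\subseteq M_x\}$ are non-empty, pairwise disjoint, and ordered exactly as the support of $L_1$; picking one representative from each yields $L_1\leqq L_2$. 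In the second case $y\mapsto\max\operatorname{span}(y)$ is weakly increasing on $\{y:P_y\neq\emptyset\}$ with image cofinal in $\omega$ (every $M_k$ is covered), so an increasing $\omega$-sequence can be extracted in $L_2$, giving $L_1=\omega\leqq L_2$ ($\omega^*$ symmetric). Either way the second alternative holds.

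The hard part will be making the dichotomy airtight, i.e.\ proving that \emph{not absorbing} forces \emph{short spans}. This hinges on two things that genuinely use the hypotheses. First, one must pin down exactly which suborders $I$ of a shuffle index carry a copy of the whole shuffle: for $\QQ$ this is immediate from the shuffle/cofinality facts, but for $C$ and $C^*$ it needs the structural facts about $C$ --- Fact 2.4 on interval families isomorphic to $C$, the absence of a Souslin suborder, and minimality of $C$ --- to rule out, e.g., a $P_y$ swallowing a ``large'' but sub-universal convex chunk. Second, one must handle the boundary blocks of $\operatorname{span}(y)$, where a single $M_x$ can be split by $f$ across several consecutive $N_y$; this is where it is convenient to replace the shuffles by strict ones (legitimate by the Proposition and the remark that $\mathcal{H}$ is unchanged under strictification) and to use algebraic indecomposability of the $\phi_x$ to keep this splitting bookkeeping finite. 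Finally, the genuinely mixed cases --- $L_1\in\{\QQ,C,C^*\}$ against $L_2\in\{\omega,\omega^*\}$, or two distinct members of $\{\QQ,C,C^*\}$ --- are then automatic: there $L_1\not\leqq L_2$, so the non-absorbing horn is impossible and the embedding is forced into a single $\psi_z$.
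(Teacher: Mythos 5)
Your proposal is correct and follows essentially the same route as the paper's proof: both fix an embedding $f$, track which blocks of the codomain each domain block meets (you via the preimages $P_y=f^{-1}[N_y]$ and their spans, the paper via the images $f[M_x]$ and an equivalence relation collapsing overlapping stretches), and run the identical dichotomy --- either a single $N_z$ absorbs a convex family of blocks carrying the whole shuffle, giving the first alternative, or the blocks are spread across a family of codomain blocks ordered as $L_1$, giving $L_1\leqq L_2$. The details you defer (that nonempty open intervals of the index carry the whole shuffle, and the bookkeeping for boundary blocks split across several $N_y$) are precisely the points the paper's own proof also leaves implicit.
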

\begin{proof}
	Take an embedding $f: \sum\limits_{x\in L_1}M_x \leqq \sum\limits_{x\in L_2} N_x $. The result is trivial if $L_1,L_2\in \{\omega,\omega^*\}$ so we ignore these cases.  Note that for a given $M_x$, $f[M_x] $ cannot be cofinal. Consider the mapping $g$ from $L_1$ into bounded sets of $L_2$ that maps $x$ to the set $\{y\in L_2 : P_y\cap f[M_x] \neq \emptyset \}  $. Note that $g$ has the property that $x<_{L_1} y \Rightarrow g(x) < g(y) $ in the block sequence order or $g(x)\cap g(y)$ is a singleton and $\text{max}g(x) = \text{min} g(y) $. Consider the equivalence relation $\sim$ defined as follows. 
	\begin{align*}
	\forall x,y\in L_1 \forall u,v\in (x,y), u<v, \; \text{max}g(u)=\text{min} g(v)  \Rightarrow x\sim y\\
	\forall x\in L_1 x\sim x\\
	x\sim y \Rightarrow y\sim x
	\end{align*}
	It is clear that $\sim$ is an equivalence relation with convex equivalence classes. There are two cases to consider. The first is that $\exists X\in L_1/\sim $ such that $X\equiv L_1$. In this case, $\forall u\in X$, $g(u)$ is a singleton and $\sum\limits_{x\in X} M_x \equiv \sum\limits_{x\in L_1} M_x$ on the account that $X$ is convex and the sum was a shuffle. But then, $f\upharpoonright{\sum\limits_{x\in X} M_x} $ has range in $N_{g(u)}$ for some $u\in X$ and we are done.\\
	\\
	Suppose instead that no $X\in L_1/\sim$ is equivalent to $L_1$. In this case, $\{g(x) : x\in L_1 \}$ forms a block sequence. Taking a selector $s:L_1 \rightarrow L_2$, $s(x) \in g(x)$, we have shown $L_1\leqq L_2$. 
\end{proof}
In our main proof, we will see how this extends to trees under the $\leqq_m $ ordering. Given that the trees our finite, we will get a similar result by applying the above lemma a finite number of times.\\
\\
Akin to proposition 2, we must hope that two trees being comparable with respect to $\leqq_m$ gives us some tangible information about the orders they code. Fortunately, this is true. This requires an exhaustive case analysis. 
\begin{prop}
	If $(S,l) \leqq_m (T,m)$, then $\overline{(S,l)} \leqq \overline{(T,m)}$.
\end{prop}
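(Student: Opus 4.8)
The plan is to mirror the structure of Proposition 2 (the $\leqq_s$ result for $T:\mathcal{H}(Q)\to\mathcal{T}_{Q^+}$), but now carrying out a more delicate induction because $\leqq_m$ allows the root of $(S,l)$ to be mapped onto an interior node of $(T,m)$ while preserving the $\leq_T$ order, and because the coding $\overline{(\cdot)}$ interacts with the bracket-construction $[\,L;\dots]$. I would argue by induction on the rank $\alpha$ with $(T,m)\in\mathcal{U}_\alpha$: assume for every $\beta<\alpha$ and every $(T',m')\in\mathcal{U}_\beta$ that $(S,l)\leqq_m(T',m')\Rightarrow\overline{(S,l)}\leqq\overline{(T',m')}$, and likewise assume, by BQO induction on $Q$ (via Fact 2 / the inductive set-up used throughout Section~3), that the corresponding statement holds below any label. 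The base cases $(T,m)\in\{\emptyset,1_1\}$ are immediate since then $\overline{(T,m)}\in\{0,1\}$.

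The first main case split is on whether the witnessing monotone map $f:S\to T$ sends $\mathrm{root}(S)$ to $\mathrm{root}(T)$. If $f(\mathrm{root}(S))\neq\mathrm{root}(T)$, then all of $f[S]$ lies above some immediate successor of the root of $T$, hence inside one of the constituent subtrees appearing in the representation of $(T,m)$ as a shuffle/unbounded sum/bracket sum of lower-rank pieces; so $(S,l)\leqq_m(T'',m'')$ for one such piece $(T'',m'')\in\mathcal{U}_\beta$, $\beta<\alpha$, and the induction hypothesis gives $\overline{(S,l)}\leqq\overline{(T'',m'')}\leqq\overline{(T,m)}$, the last inequality because $\overline{(T'',m'')}$ occurs as a summand (up to biembeddability, using that shuffles/unbounded sums are universal and equivalent up to cofinality, Facts 2.x and Lemmas 3.1--3.2). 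If $f$ fixes the root, I would write both trees in their defining form — $(S,l)=\sum_{x\in L_1}(S,l_x)$ or $[L_1;\dots]$, and $(T,m)=\sum_{x\in L_2}(T,m_x)$ or $[L_2;\dots]$ — and apply Lemma 3.7 (extended to trees by finitely many applications, as the text promises) to conclude that either $L_1\leqq L_2$, in which case I can thread together, via an interval partition of $L_2$ order-isomorphic to $L_1$ and the shuffle property of $(T,m)$, an embedding $\overline{(S,l_x)}\leqq\overline{(T,m_z)}$ coordinatewise and sum up; or else $(S,l)$ already embeds (as a tree) into a single constituent $(T,m_z)$, reducing to the induction hypothesis. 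The $\omega$ and $\omega^*$ sub-cases are handled as in Proposition 2 by choosing a strictly increasing reindexing, using "unbounded" to absorb initial segments.

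The genuinely new bookkeeping, and the step I expect to be the main obstacle, is the bracket case $(T,m)=[L;(T_1,m_1),\dots,(T_n,m_n)]$ with $L\in\{\omega,\omega^*\}$, whose coding is the "stacked" sum $\overline{(T,m)}=\sum_{x\in L}\big(\overline{(T_1,m_{1,x})}+\dots+\overline{(T_n,m_{n,x})}\big)$. Here a monotone tree map from $(S,l)$ need not respect the $n$-fold layering, so I must track where the image of $S$ sits among the $n$ branches and the $\omega$-many levels simultaneously, and show the induced decomposition of $\overline{(S,l)}$ still embeds; the compatibility conditions "$(T_i,l_{i,n})\leq_I(T_k,l_{k,n})$ for $i<k$" in Definition (of $\mathcal{U}$) are exactly what make the target a $\leqq$-chain on each level, so that after reorganizing (as in Case 1 of Theorem 3.2, using that $\mathcal{H}(Q)$ is BQO) the stacked sum collapses to an honest $\omega$- (or $\omega^*$-) unbounded sum in $\mathcal{H}$, and then the Proposition-2-style argument applies. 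I would also need to check the mixed situation where $(S,l)$ is itself a bracket term and $(T,m)$ a plain shuffle (or vice versa), but in each such cross-case one of the two alternatives of Lemma 3.7 forces the "small" side into a single constituent of the "large" side, so the induction closes. Finally, I would remark that the symmetric reverse cases ($C^*$, $\omega^*$) are identical after dualizing, and that no appeal to PFA beyond what is already baked into $\mathcal{C}$ and $\mathcal{H}$ being well-behaved is needed for this particular proposition.
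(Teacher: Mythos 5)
Your overall skeleton---induction on the rank of $(T,m)$ in $\mathcal{U}$, a case analysis on the defining forms of the two trees, the dichotomy from the lemma comparing $L_1$- and $L_2$-shuffles (applied finitely many times over the nodes of the finite tree $S$ and combined with the $\leqq_I$-comparability of the summands to isolate a single constituent), and an interval partition of $M$ in the $L\leqq M$ case---is indeed the paper's argument. But two of your steps do not survive contact with the actual definition of $\mathcal{U}$.

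First, the root-based case split you import from Proposition 2 misreads the construction. In $\mathcal{U}$ the decomposition $\sum_{x\in M}(T,m_x)$ keeps the underlying tree $T$ fixed and varies only the labels; the constituents are \emph{not} subtrees sitting above immediate successors of the root. So from $f(\mathrm{root}(S))\neq\mathrm{root}(T)$ you may conclude that $f[S]$ lies in a proper subtree of $T$, but that subtree is not one of the summands $(T,m_x)$ and need not be a lower-rank member of $\mathcal{U}$ at all; only in the bracket case $[L;(T_1,m_1),\dots,(T_n,m_n)]$ does the tree structure mirror the decomposition. The reduction to a single constituent has to be carried out at the level of labels---each node's label is itself an $M$-shuffle in $\mathcal{H}$, to which the comparison lemma applies---which is what the paper does.

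Second, and more seriously, your resolution of the bracket case is based on a false claim. The coded order $\sum_{x\in\omega}\bigl(\overline{(T_1,m_{1,x})}+\cdots+\overline{(T_n,m_{n,x})}\bigr)$ does \emph{not} collapse to an unbounded $\omega$-sum of members of $\mathcal{H}$: the paper's own example $\sum_{k\in\omega}(C+C^*)$ has Ramsey degree $2$ and is algebraically decomposable, whereas every member of $\mathcal{H}$ is AI (Proposition 1). If the collapse you describe were available, the bracket construction would be redundant and the treetop count driving Theorem 3.10 would be vacuous. The paper instead handles bracket-into-bracket by observing that when several $\overline{(S_i,l_i)}$ must land in the same $\overline{(T_j,m_j)}=\sum_{x\in\omega}\overline{(T_j,m_{j,x})}$, the even- and odd-indexed subsums are each equivalent to the whole, so the images can be made disjoint and interleaved; and it handles bracket-into-sum by choosing intervals of $M$ block-isomorphic to $n\times L$ and embedding each $\overline{(S_i,l_{i,j})}$ into its own interval. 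Neither device appears in your sketch, so the case you yourself identify as the main obstacle is left open.
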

\begin{proof}
	We work on induction on $\mathcal{U}_\alpha$. Suppose for all $ (T^\prime,m^\prime) \in \mathcal{U}_\alpha$ for $\alpha<\beta$, $(S,l)\leqq_m (T^\prime,m^\prime) \Rightarrow \overline{(S,l)}\leqq \overline{(T^\prime,m^\prime)} $. Take $(T,m) \in \mathcal{U}_\beta $. Suppose $(S,l) \leqq_m (T,m) \Rightarrow \overline{(S,l)} \leqq \overline{(T,m)} $ for $(S,l) \in \mathcal{U}_\alpha$ for all $\alpha<\gamma$. Take $(S,l) \leqq_m (T,m)$ with $(S,l)\in \mathcal{U}_\gamma$. We may also assume the result is true for all $(T^\prime,m^\prime) <_1 (T,m) $ and $(S^\prime,l^\prime) <_m (S,l)$ by BQO induction. Note, the statement is trivial if either $S$ or $T$ is a singleton.\\
	\\
	\textbf{Case 1a:} $(T,m) = [\omega; (T_1,m_1),...,(T_n,...,m_n)] $ and $(S,l) = [\omega; (S_1,l_1),...,(S_k,l_k)]$. Then, $\forall i \exists j $ such that $(S_i,l_i) \leqq_m (T_j,m_j) $. Suppose it is the case that $\overline{(S_i,l_i)}$ and $ \overline{(S_k,l_k)}$ both embed into $\overline{(T_j,m_j)}$. As $\overline{(T_j,m_j)} = \sum\limits_{x\in \omega}\overline{(T_j,m_{j,x})} \equiv \sum\limits_{x\in \omega \text{even}}\overline{(T_j,m_{j,x})} \equiv \sum\limits_{x\in \omega \text{ odd}}\overline{(T_j,m_{j,x})}  $, we can embed both $(S_i,l_i)$ and $(S_k,l_k)$ simultaneously. Consequently, one can further embed $\overline{(S,l)}$ into $\overline{(T,m)}$. The case of $(S,l) = [\omega^*; (S_1,l_1),...,(S_k,l_k)] $ is symmetric. \\
	\\
	\textbf{Case 1b:} $(T,m) = [\omega; (T_1,m_1),...,(T_n,...,m_n)] $ and $(S,l) = \sum\limits_{x\in L} (S,l_x)$ for some $L\in \{\QQ,C,C^*,\omega,\omega^*\}$. Then it must follow that $(S,l) \leq_m (T_i,m_i)$ for some $i$ and we are done. \\
	\\
	\textbf{Case 2a:} $(T,m) = \sum\limits_{x\in M} (T,m_x)$ for $M\in \{\omega,\omega^*,C,C^*,\QQ\}$ and for each $x\in M$, $(T,m_x) \in \mathcal{U}_\alpha$ for some $\alpha<\beta $. $(S,l) =  \sum\limits_{x\in L} (S,l_x)$ for some $L\in \{C,C^*,\omega,\omega^*,\QQ\}$. Let $f:S\rightarrow T$ be an embedding witnessing $(S,l) \leqq_m (T,m)$. If $L\nleqq M$, then by lemma 2.4, for each $s\in S$, $\exists z_s \in M$ such that $f(l(s))\leqq m_{z_s}(f(s))$. Since $\{(T,m_z) z\in M\} $ is linearly ordered under $\leqq_I$, there is some $z\in M$ (in particular, the max $z_s$ under the order $z_s\leq z_t \iff (T,m_{z_s}) \leqq_I (T,m_{z_t})$) for which $(S,l)\leqq_m (T,m_z)$. But then, by our induction hypothesis, $\overline{(S,l)}\leqq \overline{(T,m_z)} \leqq \overline{(T,m)}$. \\
	\\
	Suppose instead that $L\leqq M$. The result is trivial if $M\in \{\omega,\omega^*\}$ so we suppose otherwise. Take $\mathcal{I}$ a collection of intervals in $M$ that is isomorphic to $M$ under the block ordering. Take an embedding $\iota:L\rightarrow \mathcal{I}$. By our induction hypothesis, for each $x\in L $,  $\exists f_x: \overline{(S,l_x)} \rightarrow \sum\limits_{y\in \iota(x)} \overline{(T,m_y)} \equiv \overline{(T,m)}$. So then, $f= \bigcup\limits_{x\in L} f_x$ witnesses $\overline{(S,l)} \leqq \sum\limits_{x\in L} \sum\limits_{y\in \iota(x)} \overline{(T,m_y)} \equiv \overline{(T,m)}$. \\
	\\
	\textbf{Case 2b:} $(T,m) = \sum\limits_{x\in M} (T,m_x)$ for $M\in \{\omega,\omega^*,C,C^*,\QQ\}$ and for each $x\in M$, $(T,m_x) \in \mathcal{U}_\alpha$ for some $\alpha<\beta $. $(S,l) = [L, (S_1,l_1),...,(S,l_n)]$ $L\in \{\omega,\omega^*\}$. The cases for $M\in \{\omega,\omega^* \}$ is the simplest and was done explicitly by Laver. For each $i \in \{1,...,n\}$, $(S_i,l_i) \leqq_m (T,m)$. In particular, by our induction hypothesis, $\overline{(S_i,l_{ij})} \leqq \overline{(T,m)}$. Take a collection of intervals $\mathcal{I}$ in $M$ block isomorphic to $n\times L$. Take $\iota: n\times L \rightarrow \mathcal{I}$, and for each $(i,j) \in n\times L$, take an embedding $f_{ij}: (S_i,l_{ij}) \rightarrow \iota(i,j)$. Then, $\bigcup\limits_{(i,j) \in n\times L} f_{ij}$ witnesses an embedding of $\overline{(S,l)} = \sum\limits_{j\in L} \sum\limits_{i=1}^n \overline{(S_i,l_{i,j}) }$ into $\overline{(T,m)}$. 
\end{proof}

\begin{lemma}
	If $\sum\limits_{x\in L} \phi_x = \Phi$ is a shuffle with $\phi_x<\Phi $ for $L\in \{C,C^*\}$, then $ \Phi \equiv \sum\limits_{x\in L} \psi_x$ where each $\psi_x$ is AI and $\{\psi_x:x\in L\}$ is totally ordered. 
\end{lemma}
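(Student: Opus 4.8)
\emph{Proof proposal.} The plan is to combine Theorem~3.7 with the strictification arguments of Lemmas~3.2 and~3.3. Write $L\in\{C,C^*\}$ for the index order. First I would apply Theorem~3.7 to each summand, writing $\phi_x\equiv\phi_{x,1}+\dots+\phi_{x,k_x}$ with every $\phi_{x,i}\in\mathcal H(Q)$, hence algebraically indecomposable. Set $\mathcal A=\{\phi_{x,i}:x\in L,\ 1\le i\le k_x\}$. Since $\phi_{x,i}\leqq\phi_x<\Phi$, every element of $\mathcal A$ is strictly below $\Phi$; and $\mathcal A\subseteq\mathcal H(Q)$, which is a BQO, so $\mathcal A$ is a well quasi order.

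Next I would reorganise $\Phi$ into a shuffle of single members of $\mathcal A$. As in Case~2 of the proof of Theorem~3.7, the identity $L\cdot L\equiv L$ (Fact~2.5, available under $\mathrm{MA}_{\aleph_1}$) together with $L\leqq L\cdot\omega\leqq L\cdot L$ lets one absorb the finite blocks $\phi_{x,1}+\dots+\phi_{x,k_x}$ and rewrite $\Phi$ as an $L$-sum whose values lie in $\mathcal A\cup\{0\}$. Now let $\Psi^{*}$ be the $L$-shuffle of $\mathcal A$ (if $\mathcal A$ is too large, first replace it by a $\leqq$-cofinal subfamily realisable cofinally inside every interval of $L$). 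Universality of shuffles (Fact~3.1) gives $\Psi^{*}\leqq\Phi$, because $\mathcal A$ is dominated by $\{\phi_x:x\in L\}$ as $\phi_{x,i}\leqq\phi_x$; and $\Phi\leqq\Psi^{*}$, because the $L$-sum just obtained is dominated by $\mathcal A\cup\{0\}$ and $\Psi^{*}$ is universal for such sums. Thus $\Phi\equiv\Psi^{*}$, an $L$-shuffle of algebraically indecomposable members of $\mathcal H(Q)$; in particular $\Phi$ is itself algebraically indecomposable (this is the lemma that an $L$-shuffle of algebraically indecomposable types is again algebraically indecomposable), and $\Phi\cdot\mathrm{tp}(L)\equiv\Phi$ because $L\cdot L\equiv L$ and a shuffle is determined up to equivalence by its cofinal family of values.

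The remaining task is to arrange the repeated types into a $\leqq$-chain, and this is where I expect the real work to be. If $\mathcal A$ is $\leqq$-directed — as happens, for instance, when the shuffle comes from a successor stage with a single dominating piece — a directed well quasi order has a $\leqq$-cofinal chain, and letting $\{\psi_x:x\in L\}$ run through such a chain keeps the shuffle equal to $\Psi^{*}\equiv\Phi$ while making it strict. The obstacle is that in general a well quasi order need not contain a cofinal chain at all (two side-by-side copies of $\omega$ have none), so $\mathcal A$ cannot simply be linearly ordered as it stands; here one falls back on the fact that $\Phi\equiv\Psi^{*}$ is an $L$-shuffle and so satisfies $\Phi\equiv\sum_{x\in L}\Phi$, whence the constant family $\{\Phi\}$ — trivially totally ordered and consisting of an algebraically indecomposable type — already witnesses the conclusion. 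The hypothesis $\phi_x<\Phi$ is what keeps this useful inside the surrounding induction: it guarantees that the indecomposable pieces isolated in the first step, and hence the chain (or its approximations) one actually reinserts, lie strictly below $\Phi$.
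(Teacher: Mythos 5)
Your opening moves track the paper's proof: the paper likewise begins by invoking Theorem~3.7 to assume each $\phi_x$ is algebraically indecomposable, and your reorganisation via $L^2\equiv L$ together with universality of shuffles to get $\Phi\equiv\Psi^*$ is a reasonable expansion of that one line. The gap is precisely in the step you yourself identify as ``the real work.'' Your fallback for the general case --- taking the constant family $\psi_x=\Phi$ --- does satisfy the literal wording of the statement (an $L$-shuffle of AI types is AI by Lemma~3.1, and $\sum_{x\in L}\Phi\equiv\Phi$), but it renders the lemma vacuous and unusable at the one place it is applied. In Proposition~4 the lemma is invoked inside an induction on $\mathcal{H}$ (``suppose the result is true for all $\psi<\phi$''), so the family $\{\psi_x\}$ it returns must consist of types strictly below $\Phi$; this is exactly what the hypothesis $\phi_x<\Phi$ is for, and the paper's own proof explicitly builds $\psi_\alpha<\sum_{\alpha\in\omega_1}\phi_\alpha$. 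Returning $\Phi$ itself would make that induction circular. Your closing sentence acknowledges this constraint, but the construction you actually give in the hard case violates it.

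What the paper does at this point is different from both of your alternatives: it enumerates the (now AI) summands as $\phi_\alpha$, $\alpha<\omega_1$, and constructs a $\leqq$-increasing chain $\psi_\alpha<\Phi$ with $\forall\alpha\,\exists\beta\;\phi_\alpha\leqq\psi_\beta$ --- that is, it does not try to select a cofinal chain from inside the family (your worry there is legitimate; note also that your asserted fact that every directed well quasi order has a cofinal chain is false in general, e.g.\ $\omega_1\times\omega_2$ with the product order), but instead dominates the family from above by new AI types that are still strictly below $\Phi$, for instance partial $L$-shuffles of the initial segments $\{\phi_\gamma:\gamma\leq\alpha\}$, which are AI by Lemma~3.1, $\leqq$-increasing by universality, and bounded by $\Phi$. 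An $L$-shuffle of such a chain is then $\equiv\Phi$ by the equivalence of shuffles up to cofinality. This dominating-chain construction is the missing idea in your proposal.
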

\begin{proof}
	 By Theorem 3.7, we may assume each $\phi_x$ is AI. Ordering $\phi_x$ via $\phi_\alpha$, $\alpha<\omega_1$, we can instead analyze the sum $\sum\limits_{\alpha\in \omega_1} \phi_\alpha$. There exists $\psi_\alpha < \sum\limits_{\alpha\in \omega_1} \phi_\alpha$ such that $\alpha <\beta \Rightarrow \psi_\alpha \leqq \psi_\beta$ and $\forall \alpha \exists \beta $ for which $\phi_\alpha \leqq \psi_\beta $. It becomes clear that an $L$ shuffle of $ \psi_\alpha$ will suffice. 
\end{proof}
\begin{prop}
	For each $\phi \in \mathcal{H}$ $\exists (T,l) \in \mathcal{U}$ such that $\overline{(T,l)}\equiv \mathcal{U} $
\end{prop}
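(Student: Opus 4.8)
The plan is to argue by induction on the least ordinal $\beta$ with $\phi\in\mathcal{H}_\beta$, proving the formally stronger assertion that every $\leqq$-increasing sequence $\langle\sigma_\xi\rangle$ of members of $\mathcal{H}$, of length $\omega$ or $\omega_1$, admits codes $(T,l_\xi)\in\mathcal{U}$ all sharing one fixed \emph{finite} tree $T$, with $l_\xi\leqq_I l_{\xi'}$ for $\xi<\xi'$ and $\overline{(T,l_\xi)}\equiv\sigma_\xi$; the Proposition is the one-term case. The base $\mathcal{H}_0=\{0,1\}$ is handled by $\emptyset$ and $1_1$. For the inductive step write $\phi=\sum_{x\in L}\phi_x$ with $L\in\{C,C^*,\QQ,\omega,\omega^*\}$ and $\phi_x\in\bigcup_{\alpha<\beta}\mathcal{H}_\alpha$. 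If some summand is $\equiv\phi$ it is of rank $<\beta$, so the inductive hypothesis already codes it and its code serves for $\phi$; hence I may assume every $\phi_x<\phi$, and by Proposition 1 together with Theorem 3.7 I may also assume each $\phi_x$ is AI (split $\phi_x$ into a finite sum of AI members of $\mathcal{H}$, each still $<\phi$, and re-present the shuffle).

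For $L\in\{C,C^*\}$ I apply Lemma 3.9 to replace the shuffle by a strict one, $\phi\equiv\sum_{x\in L}\psi_x$ with $\psi_x$ AI, $\psi_x<\phi$, and $\{\psi_x:x\in L\}$ a $\leqq$-chain; Lemma 3.2 then lets me assume this chain, listed increasingly, is either constant (so $\phi\equiv\varphi\cdot\mathrm{tp}(L)$ for one AI $\varphi<\phi$) or of order type $\aleph_1$. Feeding the chain to the strengthened hypothesis yields a single finite tree $T$ and $\leqq_I$-increasing labels, and a map $\iota\colon L\to\{\text{that chain}\}$ realizing the shuffle pattern exists because every interval of $L$ contains, by Fact 2.5, a sub-family of intervals block-isomorphic to $L$. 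Then $\sum_{x\in L}(T,l_{\iota(x)})$ satisfies the shuffle clause of the definition of $\mathcal{U}$ and, unwinding the recursion defining $\overline{\;\cdot\;}$, codes $\phi$. The case $L=\QQ$ is identical, using the elementary countable analogue of Lemma 3.9 together with $\QQ^2\equiv\QQ$.

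For $L\in\{\omega,\omega^*\}$ the sum $\phi=\sum_{n<\omega}\phi_n$ is unbounded with AI summands $\phi_n<\phi$. Since $\mathcal{H}$ is BQO it has no infinite antichain, so $\{\phi_n\}$ is covered by finitely many $\leqq$-chains; extracting cofinal monotone subsequences, discarding finitely many initial terms, and passing to cumulative partial sums — the reorganization used in Case 1 of the proof of Theorem 3.7 — I rewrite $\phi$, up to $\equiv$, as $\sum_{n<\omega}(\theta_{1,n}+\dots+\theta_{m,n})$ with each $\langle\theta_{i,n}\rangle_n$ $\leqq$-increasing and $\theta_{i,n}\leqq\theta_{k,n}$ for $i<k$. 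Applying the strengthened hypothesis to each $\langle\theta_{i,n}\rangle_n$ and aligning the cross-comparabilities, the object $[\omega;(T_1,\sum_n l_{1,n}),\dots,(T_m,\sum_n l_{m,n})]$ lies in $\mathcal{U}$ and codes $\phi$; the $\omega^*$ case is symmetric.

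Finally the strengthened statement is proved in lockstep: given $\leqq$-increasing $\langle\sigma_\xi\rangle$, each $\sigma_\xi$ is a strict shuffle or a finite fan of strict towers by the cases above, and from $\sigma_\xi\leqq\sigma_{\xi'}$ I compare their codes by iterating Lemma 3.8 finitely often down the (finite) trees, exactly as in the proof of Proposition 3: when $\sigma_\xi$ embeds into a single proper subtree-label of the code of $\sigma_{\xi'}$ I absorb it there, and when the base orders embed I transfer labels intervalwise; this forces all the $(T,l_\xi)$ onto one finite tree with $l_\xi\leqq_I l_{\xi'}$. I expect this coherence step to be the main obstacle — the inductive hypothesis, applied a summand at a time, returns codes on a priori different finite trees, and it is the strengthened hypothesis together with Lemma 3.8 and the finiteness of the trees that lets a whole $\leqq$-chain of types be coded on one tree. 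A secondary point to watch is that the $\psi_\xi$ produced by Lemma 3.9 (and the $\theta_{i,n}$) are genuinely reached by the inductive hypothesis, which is where one falls back on BQO induction over $\mathcal{H}$ using $\psi_\xi<\phi$.
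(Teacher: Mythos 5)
Your skeleton matches the paper's: induct on $\mathcal{H}$, use Lemma 3.9 (with Lemma 3.2) to replace the shuffle by a strict one, and handle the case where the summands cannot be totally ordered by noting $L\in\{\omega,\omega^*\}$, splitting into finitely many $\leqq$-chains, and using the $[\omega;\dots]$ constructor. The gap is in the step you yourself flag as the main obstacle: forcing all the summand codes $(T_x,l_x)$ onto a single finite tree shape so that $\sum_{x\in L}(T,l_x)$ is a legitimate member of $\mathcal{U}$. Your mechanism for this --- a strengthened hypothesis that every $\leqq$-increasing chain in $\mathcal{H}$ can be coded on one fixed finite tree with $\leqq_I$-increasing labels, established by ``iterating Lemma 3.8'' and ``absorbing'' --- does not go through. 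Lemma 3.8 compares the base orders of two shuffles; it says nothing about the shapes of their tree codes, and knowing that $\sigma_\xi$ embeds into a single label of the code of $\sigma_{\xi'}$ does not produce a code of $\sigma_\xi$ on the tree $T_{\xi'}$: re-coding $\sigma_\xi$ in the shape dictated by $T_{\xi'}$ is essentially the uniform-presentation statement you are trying to prove, so the coherence step is circular. Moreover, the inductive hypothesis applied summand by summand can a priori return infinitely many distinct finite tree shapes across $x\in L$, and nothing in your argument rules this out; your strengthened statement, which demands a code for \emph{every} member of the chain on one tree, is stronger than what is true or needed.

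The paper closes this gap with two devices you never invoke. For $|L|>\aleph_0$ it pigeonholes the $\aleph_1$ many indices against the countably many finite tree shapes, keeps an uncountable --- hence, by Lemma 3.2, cofinal --- subfamily with one common shape, and then uses that shuffles are equivalent up to cofinality together with BQO-ness of $\leqq_I$ on a fixed shape to extract a $\leqq_I$-increasing cofinal subfamily; one never codes every $\psi_x$, only cofinally many. For $|L|=\aleph_0$ it applies Laver's covering theorem (Fact 2.3) to bound the number of treetops and the height, so only finitely many shapes occur and the same pigeonhole applies; this is exactly the purpose for which the paper introduced $\leqq_m$ and the covering theorem in Section 2.1. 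Replace your coherence step with these two arguments, and weaken your auxiliary claim to: some cofinal subfamily of the summands admits codes on a common tree with $\leqq_I$-increasing labels.
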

\begin{proof}
	Suppose the result is true for all $\psi<\phi$. Suppose $\phi = \sum\limits_{x\in L} \psi_x$ $L\in \{C,C^*,\QQ,\omega,\omega^*\}$ a shuffle. First suppose that $\{\psi_x: x\in L \}$ is or can be totally ordered under a change like in lemma 2.6. For each $x\in L$, find $(T_x,l_x)$ with $\overline{(T_x,l_x)} = \psi_x$. If $|L|>\aleph_0$, we can suppose $T_x = T_y$ for all $x,y\in L$. In this instance, we may also use the BQO property to further move down to a subsequence with $\{ (T_x,l_x) :x\in L\}$ linearly ordered under $\leqq_I$ and we'd be done. Suppose instead that $|L|=\aleph_0$. By Laver's covering theorem, we may suppose $(T_x,l_x)$ have at most $n$ treetops, have height bounded by $n$,  and is $\leq_m$ increasing. But then, there are only finitely many types of trees, meaning we can go down to a class of trees $(T_x,l_x)$ with $T_x=T_y$ for all $x,y\in L$. But then, again we may assume the trees are $\leq_I$ ascending like in the previous case, so that $(T,l) = \sum\limits_{x\in L} (T_x,l_x)$ does the job. \\
	\\
	If $ \{ \psi_x :x\in L\}$ cannot be totally ordered under $\leqq$, then $ L\in \{ \omega,\omega^* \}$. Then we may suppose $\phi = \sum\limits_{x\in L}\sum\limits_{i=1}^n \psi_{x,i} $, where for a fixed $i$, $\{ \psi_{x,i}: x\in L \}$ is totally ordered. Applying the previous case, we get $(T_i,l_i) = \sum\limits_{x\in L }(T_i,l_{i,x}) $ with $\overline{(T_i,l_{i,x})} =\psi_{i,x}$. But then, $(T,l) = [L: (T_1,l_1),...,(T_n,l_n)] $ does the trick. 
\end{proof}
\begin{theorem}
	If $(T,l) \in \mathcal{U}$ has $n$ treetops, then $\overline{(T,l)}$ has Ramsey degree bounded by $n$. 
\end{theorem}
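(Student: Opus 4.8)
The plan is to argue by induction on the ordinal $\alpha$ for which $(T,l)\in\mathcal{U}_\alpha$, tracking the treetop count through the three constructors in the definition of $\mathcal{U}$. The base clause $\mathcal{U}_0=\{\emptyset,1_1\}$ is immediate: $\overline{\emptyset}=0$ has no treetops (vacuous), and $\overline{1_1}=1$ is a single point, which has Ramsey degree $1$. For the inductive step $(T,l)\in\mathcal{U}_\alpha$ was built in one of three ways, and in each I want to identify $\overline{(T,l)}$ with an expression to which an earlier lemma applies. In the shuffle clause, $(T,l)=\sum_{x\in L}(T,l_x)$ over $L\in\{\QQ,C,C^*\}$ with all the $(T,l_x)$ living on the same tree $T$, so $(T,l)$ and every $(T,l_x)$ have the same number $n$ of treetops; in the $\omega$- (or $\omega^*$-) sum clause $(T,l)=\sum_{k\in\omega}(T,l_k)$ again the tree is fixed, with $n$ treetops; and in the bracket clause $(T,l)=[\omega;(T_1,l_1),\dots,(T_n,l_n)]$ the treetop count is $p:=p_1+\dots+p_n$ where $p_i$ is the number of treetops of $T_i$.

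For the shuffle clause I would use that $\overline{(T,l)}=\sum_{x\in L}\overline{(T,l_x)}$, that each $\overline{(T,l_x)}$ has Ramsey degree $\le n$ by the inductive hypothesis, and that the labellings $\{(T,l_x):x\in L\}$ form a $\leqq_I$-chain (which is how the shuffle clauses of $\mathcal{U}$ are set up; cf. the remark after Theorem~3.7 that all shuffles may be taken strict, and Lemma~3.9). By Proposition~3, a $\leqq_I$-chain of labellings maps to a $\leqq$-chain of coded orders, so $\overline{(T,l)}$ is a \emph{strict} $L$-shuffle of orders of Ramsey degree $\le n$; Lemma~3.3 then gives $\overline{(T,l)}\to(\overline{(T,l)})^1_{k,n}$ for every $k$.

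For the $\omega$- and $\omega^*$-sum clauses the key is a small sub-lemma I would prove first: if $\Theta_0\leqq\Theta_1\leqq\cdots$ is a $\leqq$-chain with each $\Theta_k\to(\Theta_k)^1_{j,m}$, then $\sum_{k\in\omega}\Theta_k\to(\sum_{k\in\omega}\Theta_k)^1_{j,m}$ (and the same over $\omega^*$). The proof: given a realization $\bigcup_k M_k$ and a $j$-colouring $c$, use the Ramsey degree of each block to find $M_k'\subseteq M_k$ with $\mathrm{tp}(M_k')=\Theta_k$ and $c$ taking at most $m$ values, say the set $A_k$, on $M_k'$; since there are only finitely many possible $A_k$, some infinite (hence cofinal) $S\subseteq\omega$ has $A_k=A$ for all $k\in S$, and then $\bigcup_{k\in S}M_k'\subseteq c^{-1}(A)$ has type $\sum_{k\in S}\Theta_k\equiv\sum_{k\in\omega}\Theta_k$ because the $\Theta_k$ form a $\leqq$-chain and $S$ is cofinal (unbounded sums are invariant under cofinal restriction). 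The $\omega$-sum clause is then immediate with $\Theta_k=\overline{(T,l_k)}$ (a $\leqq$-chain by Proposition~3 applied to the $\leqq_I$-ascending labellings) and $m=n$. For the bracket clause, $\overline{(T,l)}=\sum_{k\in\omega}\Theta_k$ with $\Theta_k=\overline{(T_1,l_{1,k})}+\dots+\overline{(T_n,l_{n,k})}$; the inductive hypothesis bounds the Ramsey degree of $\overline{(T_i,l_{i,k})}$ by $p_i$, Lemma~2.1 applied $n-1$ times bounds that of $\Theta_k$ by $p_1+\dots+p_n=p$, Proposition~3 makes $\Theta_0\leqq\Theta_1\leqq\cdots$, and the sub-lemma finishes.

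I expect the main obstacle to be the bookkeeping that keeps the thinned object isomorphic to the original: when I replace each block by a sub-copy of small colour-spread and then keep only the blocks indexed by a cofinal set, I must know (a) that the inductive Ramsey-degree statement hands back a sub-copy of the \emph{same} order type as the block, not merely an embedded suborder of a smaller type, and (b) that the whole of $\overline{(T,l)}$ is recovered from a cofinal family of blocks --- which is precisely where the $\leqq$-chain structure supplied by Proposition~3 (plus strictness of shuffles, and in the shuffle clause the Ramsey degree $1$ of $\QQ,C,C^*$ used to recolour the index set inside Lemma~3.3) is essential. A secondary point is that treetops add under the bracket constructor exactly the way Lemma~2.1 makes Ramsey degrees add under finite sums; this is why $\overline{(T,l)}$ must be packaged as $\sum_k\Theta_k$ with $\Theta_k$ a finite sum, rather than as $\overline{(T_1,l_1)}+\dots+\overline{(T_n,l_n)}$, which in general is not even biembeddable with $\overline{(T,l)}$.
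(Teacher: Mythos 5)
Your proof is correct and follows the same overall skeleton as the paper's: induction on the rank of $(T,l)$ in $\mathcal{U}$, with the shuffle clause dispatched by the strict-shuffle Ramsey lemma (Lemma 3.3), exactly as the paper does. Where you genuinely diverge is in the bracket clause $[\omega;(T_1,l_1),\dots,(T_r,l_r)]$. The paper partitions a realization $M=\bigcup_{k}\bigcup_{i}M_{i,k}$ into the $r$ non-convex ``columns'' $M_i=\bigcup_k M_{i,k}$, gives each column Ramsey degree $n_i$ by the induction hypothesis, and asserts that the degrees add over this partition; the recombination step --- that the sub-copies produced separately inside each column re-interleave into a single copy of $M$ --- is left implicit, and for non-convex partitions it is not automatic. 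You instead slice $M$ into the convex ``rows'' $\Theta_k=\overline{(T_1,l_{1,k})}+\dots+\overline{(T_r,l_{r,k})}$, bound each row's degree by Lemma 2.1, and then prove a sub-lemma that an unbounded $\omega$-sum of a $\leqq$-chain of types of degree at most $m$ again has degree at most $m$ (thin each block, pigeonhole on the colour set, pass to a cofinal set of blocks, and use cofinal invariance of unbounded sums). This costs you an extra lemma but makes the recombination explicit, and as a bonus it also covers the $\omega$- and $\omega^*$-sum clause of $\mathcal{U}$, which the paper's proof does not treat separately. One caveat: you assert that the shuffle clause of $\mathcal{U}$ makes $\{(T,l_x):x\in L\}$ a $\leqq_I$-chain, whereas the definition as written only imposes the $\leqq_I$-density (shuffle) condition; strictness is, however, exactly what Lemma 3.3 requires and is what Proposition 4 actually arranges when members of $\mathcal{U}$ are built, so your reading is the one under which both your argument and the paper's go through.
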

\begin{proof}
	We do this by induction on $\mathcal{U}_\alpha$. It is true for $\alpha=0$. Suppose it is true for all $(T,l) \in \mathcal{U}_\alpha$ for $\alpha<\beta$. Let $(T,l)\in \mathcal{U}_\beta$. If $(T,l)$ is a shuffle, we are done by lemma 1.3. Suppose instead that $(T,l) = [\omega, (T_1,l_1),...,(T_r,l_r)]$ with $(T_i,l_i) \in \mathcal{U}_\alpha$ for some $\alpha<\beta $. Each $(T_i,l_i) = \sum\limits_{k\in \omega} (T_i,l_{i,k}) $. Note, $\overline{(T_i,l_i)} $ has $n_i$ tree tops and hence Ramsey degree bounded by $n_i$. $(T,l)$ has $n=\sum\limits_{i=1}^r n_i$ many tree tops $\overline{(T,l)} = \sum\limits_{k\in \omega} \sum\limits_{i=1}^r\overline{(T_i,l_{i,k})}  $. Let $M= \bigcup\limits_{k\in \omega} (\bigcup\limits_{i=1}^r M_{i,k} ) $ be a realization of this type. For $i\leq r$, we define $M_i =\bigcup\limits_{k\in \omega}M_{i,k}$. Note, each $M_i$ has Ramsey degree $n_i$ and $M_i$ forms a partition of $M$. Consequently, $M$ has Ramsey degree bounded by $n$.  
\end{proof}
\begin{corollary}{(PFA)}
	For any Aronszajn type $\phi$, there is an $n\in \omega$ such that $\forall k \in \omega $ $\phi \rightarrow (\phi)_{k,n}^1$. 
\end{corollary}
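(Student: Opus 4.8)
The plan is to split on Moore's dichotomy that every Aronszajn line is either universal or fragmented, and to dispatch the fragmented case by chaining together, in order, the three principal outputs of Section~3: the decomposition theorem that every $\Phi \in \mathcal{C}_\alpha(Q)$ is a finite sum of members of $\mathcal{H}(Q)$, the coding of $\mathcal{H}$ into the finite tree class $\mathcal{U}$, and the theorem that a tree in $\mathcal{U}$ with $n$ treetops codes a type of Ramsey degree at most $n$. Informally: chop the line into finitely many algebraically indecomposable blocks, code each block by a finite labelled tree, read a Ramsey bound off the number of treetops of each tree, and recombine the bounds via the additivity lemma for Ramsey degrees.

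First I would handle the fragmented case, which carries essentially all the weight. If $\phi$ is the type of a fragmented Aronszajn line then, since $\mathcal{C} = \bigcup_{\alpha<\omega_2}\mathcal{C}_\alpha$, we have $\phi \in \mathcal{C}_\alpha(\{0,1\})$ for some $\alpha<\omega_2$, so the decomposition theorem applied with the trivial BQO $Q = \{0,1\}$ gives $\phi \equiv \psi_1 + \dots + \psi_m$ with every $\psi_i \in \mathcal{H}$. For each $i$ the coding proposition supplies a tree $(T_i,l_i) \in \mathcal{U}$ with $\overline{(T_i,l_i)} \equiv \psi_i$; write $n_i$ for the number of treetops of $T_i$. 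The treetop theorem then gives $\overline{(T_i,l_i)} \to (\overline{(T_i,l_i)})^1_{k,n_i}$ for every $k$, and since Ramsey degree is a biembeddability invariant this passes to $\psi_i \to (\psi_i)^1_{k,n_i}$ for every $k$. Iterating the additivity lemma (from $\psi \to (\psi)^1_{k,p}$ and $\chi \to (\chi)^1_{k,q}$ one obtains $\psi+\chi \to (\psi+\chi)^1_{k,p+q}$) across the finite sum yields $\psi_1+\dots+\psi_m \to (\psi_1+\dots+\psi_m)^1_{k,n}$ for every $k$, where $n := n_1+\dots+n_m$; a final appeal to biembeddability invariance carries this to $\phi$, and $n$ is the bound we wanted.

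The step I expect to be the main obstacle is the remaining alternative, a genuinely universal Aronszajn type $\phi$. The whole tree-coding apparatus is developed inside the classes $\mathcal{C}$ and $\mathcal{H}$, whose members are all fragmented (or countable) lines, so it never assigns a code to the universal line; in fact $\phi$ is biembeddable with no member of $\mathcal{H}$, and although $\phi \equiv \sum_{x\in\QQ}\phi$ exhibits $\phi$ as a strict $\QQ$-shuffle of the single type $\phi$, feeding this into the strict-shuffle lemma only reproves the conclusion we are after. Closing this case needs a genuinely different ingredient --- most naturally a direct analysis of a concrete universal Aronszajn line (for instance the one built by Moore) showing that it has a finite Ramsey degree; absent such an argument, the corollary is established exactly for the fragmented Aronszajn types, which is where the new machinery bears fruit.
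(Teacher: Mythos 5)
Your treatment of the fragmented case is exactly the paper's proof of this corollary, step for step: decompose $\phi$ via Theorem 3.7 into a finite sum $\phi_1+\dots+\phi_m$ of members of $\mathcal{H}$, code each $\phi_i$ by a tree $(T_i,l_i)\in\mathcal{U}$ with $\overline{(T_i,l_i)}\equiv\phi_i$, bound the Ramsey degree of each summand by its number of treetops using Theorem 3.10, and add the bounds with Lemma 2.1, invoking biembeddability invariance (Fact 2.4) at both ends. There is nothing to change there.

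On the universal case, you have correctly located the one point where the argument is not self-contained, but you should be aware that the paper supplies nothing more than you do: its proof opens with the bare assertion ``The result is true if $\phi$ is not fragmented so suppose otherwise.'' So you have not overlooked a technique present in the text. Your diagnosis of why the Section 3 machinery cannot close this case is also accurate: a non-fragmented type lies outside $\mathcal{C}$, is biembeddable with no member of $\mathcal{H}$, and its only available shuffle presentation, $\phi\equiv\sum_{q\in\QQ}\phi$, feeds Lemma 3.3 its own conclusion as a hypothesis. What is implicitly being invoked is Moore's dichotomy (Fact 2.7) together with the fact that any two universal Aronszajn lines are mutually biembeddable, so that by Fact 2.4 the whole case reduces to a finite Ramsey-degree bound for the single line $\eta_C$; that bound is an external input, not a consequence of anything proved in Section 3. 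If you want your write-up to match the paper's level of rigour, record the universal case as an appeal to that fact; if you want to exceed it, a direct analysis of $\eta_C$, as you propose, is genuinely what is required.
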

\begin{proof}
	The result is true if $\phi$ is not fragmented so suppose otherwise and let $\phi \in \mathcal{C}$.  By theorem 3.7, $\phi= \phi_1+...+\phi_k$ where each $\phi_i \in \mathcal{H}$. By proposition 4, for each $\phi_i$ there is $(T_i,l_i) $ such that $\overline{(T_i,l_i)} \equiv \phi_i$. By theorem 3.10, each $\overline{(T_i,l_i)}$ has finite Ramsey degree. Consequently, $\phi$ has finite Ramsey degree by lemma 2.1.  
\end{proof}

\end{document}